\newtheorem{theorem}{Theorem}[section]
\newtheorem{proposition}[theorem]{Proposition}
\newtheorem{corollary}[theorem]{Corollary}
\newtheorem{lemma}[theorem]{Lemma}
\newtheorem*{theorem*}{Theorem}
\newtheorem*{proposition*}{Proposition}
\newtheorem*{corollary*}{Corollary}
\newtheorem*{lemma*}{Lemma}
\theoremstyle{definition}
\newtheorem{definition}[theorem]{Definition}
\newtheorem{punto}[theorem]{}
\newtheorem{example}[theorem]{Example}
\theoremstyle{remark}
\newtheorem{remark}[theorem]{Remark}
\newtheorem*{remark*}{Remark}
\newtheorem*{definition*}{Definition}
\newcommand{\lc}{\mathrm{lc}}
\newcommand{\llcm}[2]{[#1,#2]_{\ell}}
\newcommand{\rgcd}[2]{(#1,#2)_{r}}
\newcommand{\lquot}[2]{\operatorname{lquo}(#1,#2)}
\newcommand{\lrem}[2]{\operatorname{lrem}(#1,#2)}
\newcommand{\rend}[2]{\operatorname{End}(_{#1}{#2})}
\newcommand{\assign}{\gets}
\newcommand{\lquotrem}[2]{\operatorname{lquo\_rem}(#1,#2)}
\newcommand{\Ann}{\operatorname{Ann}}
\newcommand{\ffield}[1][]{\ensuremath{\mathbb{F}_{#1}}}
\newcommand{\Cset}{\ensuremath{\mathbb{C}}}
\newcommand{\bigO}{\mathcal{O}}
\newcommand{\Aut}[2][{\ffield[q]}]{\operatorname{Aut}_{#1}(#2)}
\newcommand{\Der}[3][{\ffield[q]}]{\operatorname{Der}_{#1}^{#3}(#2)}
\begin{document}

\title{Computing the bound of an Ore polynomial. Applications to factorization.}

\author{Jos\'{e} G\'{o}mez-Torrecillas}
\address{Department of Algebra and CITIC, University of Granada}
\email{gomezj@ugr.es}
\author{F. J. Lobillo}
\address{Department of Algebra and CITIC, University of Granada}\email{jlobillo@ugr.es}
\author{Gabriel Navarro}
\address{Department of Computer Science and Artificial Intelligence, and CITIC, University of Granada}
\email{gnavarro@ugr.es}

\begin{abstract}
We develop two algorithms for computing a bound of an Ore polynomial over a skew field, under mild conditions. As an application, we state a criterion for deciding whether a bounded Ore polynomial is irreducible, and we discuss a factorization algorithm. The asymptotic time complexity in the degree of the given Ore polynomial is studied.
\end{abstract}

\thanks{Research partially supported by grants MTM2013-41992-P and TIN2013-41990-R from the Ministerio de Econom\'{\i}a y Competitividad  of the Spanish Government and from FEDER.}

\keywords{
Ore polynomial; bound; skew polynomial; irreducible polynomial; factorization}
\subjclass[2010]{16S36; 16Z05; 68W30}

\maketitle

\section{Introduction}

Let $R = D[X;\sigma,\delta]$ be an Ore extension of a skew field  $D$ (i.e. a possibly noncommutative field), where $\sigma$ is an automorphism of $D$, and $\delta$ is a $\sigma$-derivation, see for example \cite{Ore:1933} for the details on this construction. This is the best known example of a noncommutative principal ideal domain (PID for short). Every \(f \in R\) has a bound $f^*$. This bound is defined (up to multiplication by nonzero elements in $D$) as a polynomial $f^* \in R$ such $Rf^* = f^*R$ is the largest twosided ideal contained in $Rf$ (or, equivalently, in $fR$, see \cite[Chapter 3]{Jacobson:1943}). An alternative description characterizes $f^*$ as a twosided multiple of $f$ of least degree. The bounds play a prominent role in the structure of cyclic modules. Concretely, since $Rf^*$ is the annihilator of the left $R$--module $R/Rf$, the lattice of submodules of \(R/Rf\) as a left \(R\)--module is the same than its lattice of submodules as a left \(R/Rf^*\)--module. For instance, a factorization of \(f\) as a product of irreducibles is equivalent to provide a composition series of \(R/Rf\), hence it can be studied modulo \(f^*\). Of course this can be exploited whenever $f$ is bounded, that is, $f^* \neq 0$, and $R/Rf^*$ becomes then an Artinian ring. If $R$ is finitely generated as a module over its center, then every nonzero $f$ is bounded. 

Concerning the factorization of Ore polynomials, there is a lot of previous research for some particular cases. In the outer differential case, i.e. \(\sigma\) is the identity and \(\delta\) is not inner, in characteristic zero, the corresponding ring of Ore polynomials is simple (see e.g. \cite[Proposition 2.1]{Goodearl/Warfield:2004}), hence any bound is trivial and other different techniques have to be used to find factors of differential operators. Skew polynomials over finite fields and Ore polynomials over rational functions on finite fields are considered, in relation to the problem of the effective factorization, by Giesbrecht \cite{Giesbrecht:1998}, Caruso and Le Borgne \cite{Caruso/LeBorgne:2017}, and Giesbrecht and Zhang \cite{Giesbrecht/Zhang:2003}, respectively. In \cite{Giesbrecht:1998} a factorization algorithm in \(R=\ffield[][X;\sigma]\) is provided, where $\ffield$ is a finite field. The technique of factorization is based on the results of Ronyai \cite{Ronyai:1987} for finding zero divisors in finite-dimensional algebras over finite fields. Concretely, for $f \in R$ of positive degree, the so-called eigenring $\mathcal{E}(Rf)$ (see \cite[\S 0.4]{Cohn:1971}) is constructed.  Namely, the eigenring is the quotient by $Rf$ of the largest subalgebra of $R$ making $Rf$ a twosided ideal.  Since $\mathcal{E}(Rf)$ is isomorphic to $\mathrm{End}_R(R/Rf)$, the ring of endomorphisms of the left $R$-module $R/Rf$, any non-trivial zero divisor in $\mathcal{E}(Rf)$ provides a non-trivial decomposition of $f$ (see \cite[pp 43-45]{Gomez:2014} for an abstraction of these methods). Such a zero divisor can be computed using the corresponding algorithm in \cite{Ronyai:1987}, since $\mathcal{E}(Rf)$ is a finite-dimensional algebra over the subfield of invariants $K=\ffield[]^\sigma$. Giesbrecht provides  a faster refinement in \cite[\S 5]{Giesbrecht:1998}. The same scheme of factorization based in the eigenring is used in \cite{Giesbrecht/Zhang:2003} when \(R = \ffield[](t)[X;\sigma,\delta]\), by using \cite{Ivanyos/Ronyai/Szanto:1994} instead of \cite{Ronyai:1987}. Unfortunately, the algorithm proposed in \cite{Giesbrecht/Zhang:2003} does not work when $\mathcal{E}(Rf)$ is a simple algebra, so the problem of finding a factorization algorithm for Ore polynomials over \(\ffield[](t)\) remains open. 

The approach in \cite{Caruso/LeBorgne:2017} for factoring \( f \in   R=\ffield[][X;\sigma]\) is based on the computation  of the norm $\mathcal{N}(f)$, a left multiple of $f$ in the center of $R$, and thereof, a multiple of $f^*$.  The  polynomial $f$ is irreducible if and only if $\mathcal{N}(f)$ is irreducible in the center. Furthermore, theoretically, a factorization of $\mathcal{N}(f)$ provides a factorization of $f$. The key-point lies in the effective computation of such factors. When the norm is a power of a irreducible polynomial in the center, i.e. when any bound is irreducible, a probabilistic method is required in order to compute a proper factor. Unfortunately, this method depends heavily on that the base field is finite (actually, on the trivialization of the Brauer group of a finite field) for having a positive probability of success, so it hardly can be translatable to other contexts.

The primary aim of this paper is to give an algorithm  for computing a bound of a bounded Ore polynomial in $R = D[X;\sigma,\delta]$ whenever the input data $D$, $\sigma$ and $\delta$ are effective and computable. As an application, we analyze the reduction of the factorization of Ore polynomials to the commutative factorization in the center $C$ of $R$. This is somehow the underlying idea of the aforementioned papers \cite{Giesbrecht:1998, Caruso/LeBorgne:2017}. Nevertheless, here, the computation of a bound  of a polynomial becomes the key-tool enabling such reduction. Actually, the role of the bound in the structure of bounded modules over Ore polynomial rings (see \cite[Ch. 3]{Jacobson:1943}) yields a generality in our approach that allow us  to cover effectively examples whose factorization theory has never managed, as far as we know, in the literature. Our methods also apply to the examples covered by \cite{Giesbrecht:1998,Giesbrecht/Zhang:2003,Caruso/LeBorgne:2017}, giving alternative algorithms of factorization for them.
Obviously, the techniques give satisfactory results whenever, firstly, the computation of a bound is effective and efficient. We provide very simple algorithms under the assumption of finiteness over the center of $R$ as a module in one case, or as an algebra, in the other. In the latter case, the algorithm runs even when the user does not know explicitly the center. Since we work under the requisite that the center is ``big enough'', our proposal is not suitable, for instance, for differential polynomial rings over fields of characteristic zero. However, skew polynomials rings with large centers have been successfully applied to design new linear codes. See e.g. \cite{Boucher/Ulmer:2009, GLN2016, Boulagouaz/Leroy:2013, Boucher/Ulmer:2014}.

We briefly outline the paper. In Section \ref{bounds}, we fix the notation concerning Ore polynomials and recall some classical and well known definitions and results about them. We provide Algorithm \ref{alg:bound} and Algorithm \ref{alg:bound2} for computing a bound $f^*$ of a bounded polynomial $f$ by an iterated use of the Euclidean division algorithm. The first one runs whenever  a finite set of generators of $R$ as a module over its center $C$ is available, whilst the latter works when we only explicitly know a finite set of generators of $R$ as an algebra over $C$, even if $C$ is not known. In Section \ref{complexity} we calculate the theoretical efficiency  of both algorithms. The computations are based in the knowledge of an upper bound of the degree of $f^*$, when $R$ is free of known finite rank as a $C$--module (Theorem \ref{cotabound}).  

Section \ref{irred} is devoted to treat the problem of the irreducibility of bounded Ore polynomials. Although many of the results are stated under more general conditions, let us assume to ease the description that $R$ is free of finite rank $r$ over its center $C$, and that this last is a (commutative) polynomial ring over a subfield of $D$.  Let $f \in R$ with $f^* \in C$. If $f$ is irreducible, then $f^*$ is irreducible over $C$, but the converse is not true. In Proposition \ref{boundmaxdegree} we give an easy sufficient condition for the converse. The general case is discussed and we provide a criterium for deciding whether or not an Ore polynomial is irreducible by checking if a related simple algebra, built from $R/Rf^*$, is a skew field, see Proposition \ref{idempotenteirreducible}. Section \ref{factoriz} concerns the use of $f^*$ for factoring out $f$ in $R$. We prove in Proposition \ref{factordenormalfactor} that any non-trivial factorization of $f^*$ in $C$ provides a non-trivial factorization of $f$ in $R$. We obtain thus a ``rough decomposition'' $f = g_1 \cdots g_n$, where each $g_i^*$ is irreducible over $C$ (Proposition \ref{roughdecomposition}). The complete factorization of each $g_i$ requires the computation of zero divisors in simple algebras over $C/Cg_i^*$, see Proposition \ref{simpledecomposition}. Unfortunately, this step depends heavily on the field $C/Cg_i^*$. It is not expected to find a factorization algorithm under so general conditions. The cases of Ore extensions of finite fields and rational functions over finite fields are considered and compared with \cite{Giesbrecht:1998, Giesbrecht/Zhang:2003, Caruso/LeBorgne:2017} in Section \ref{otherfact}. In particular, we show that algorithm \texttt{Factorization} in \cite{Giesbrecht/Zhang:2003}  contains two gaps. We propose a solution to one of them, but the other is related to the effective computation of zero divisors of a finite-dimensional simple algebra over a rational function field. So the problem of factoring out Ore polynomials over \(\ffield[q](t)\) is still open until an algorithm for computing zero divisors of simple algebras is available, see Subsection \ref{ratfunc}.

All along the paper, the theory is illustrated by numerous examples. These have been implemented and computed with the aid of mathematical software \textsf{Sage} \cite{sage}.

\section{Computing a bound in Ore polynomials}\label{basics}\label{bounds}

We begin this section fixing notation and recalling some basic facts about Ore extensions and factorization of Ore polynomials over a skew field $D$.  The original construction, as well as the first investigation on the arithmetic and factorization theory of these noncommutative polynomials in full generality, is due to O. Ore \cite{Ore:1933}. Ore polynomials are elements of an associative ring with unit $D[X;\sigma,\delta]$, where $\sigma : D \to D$ is a ring automorphism, and $\delta: D \to D$ is a $\sigma$--derivation, that is, 
\[
\delta (a+b) = \delta(a) + \delta(b) \quad \text{and} \quad \delta(ab) = \sigma(a)\delta(b) + \delta(a)b
\] 
for any $a,b \in D$. The construction of $R = D[X;\sigma,\delta]$ goes as follows: $R$ is a left $D$--vector space on the basis $\{ X^n : n \geq 0 \}$,  
the sum of polynomials is as usual, the product on $R$ is extended recursively from the rules $X^nX^m = X^{n+m}$, for $m, n \in \mathbb{N}$, and $Xa = \sigma(a)X + \delta(a)$ for $a \in D$.

There are two special classes of Ore polynomials. If $\delta = 0$, it is usually written $R = D[x;\sigma]$, and if $\sigma$ is the identity, it is omitted, and we denote $R = D[x;\delta]$. In many situations there is a reduction to one of these special cases, see \cite[Theorem 3.1, pp. 295]{Cohn:1971}.

Let $R = D[X;\sigma,\delta]$. The degree $\deg f$ of a nonzero left polynomial $f \in R$, as well as its leading coefficient $\lc(f) \in D$,  are defined in the usual way. 
We write $\deg 0 = - \infty$, with the usual conventions for this symbol, and $\lc (0) = 0$. 

As shown in \cite[\S 2]{Ore:1933}, the ring $R$ is a left and right Euclidean domain, so it has both left and right division algorithms. 
The \emph{left remainder} and \emph{left quotient} of the left division of $f$ by $g$ are denoted by $\lrem{f}{g}$, $\lquot{f}{g}$ and $(\lquot{f}{g},\lrem{f}{g}) = \lquotrem{f}{g}$. See also \cite[Algorithm 2]{Bueso/Gomez/Verschoren:2003} for a more modern description. 

As a consequence of the division algorithms, $R$ is a noncommutative principal ideal domain (PID), that is, every left and every right ideal is principal (see \cite[Chapter 3]{Jacobson:1943} for details on these rings). Twosided ideals of $R$ are of the form $R\alpha = \alpha R$ for some \emph{normal} or \emph{twosided} polynomial $\alpha$ \cite[pp. 37]{Jacobson:1943}. 

Given $f, g \in R$, we use the notation $g \mid_r f$ to say that $g$ is a \emph{right divisor} (or \emph{right factor}) of $f$, or that $f$ is a \emph{left multiple} of $g$, i.e. $Rf \subseteq Rg$. The right greatest common divisor is denoted by $\rgcd{f}{g}$ and the left least common multiple by $\llcm{f}{g}$, both defined as usual by $Rf + Rg = R\rgcd{f}{g}$ and $Rf \cap Rg = R\llcm{f}{g}$. They can be computed by using the appropriate version of the extended Euclidean algorithm (see, for instance, \cite[Theorem 4]{Ore:1933} and \cite[\S 1.4]{Bueso/Gomez/Verschoren:2003}). The associativity of the sum and intersection of left ideals allows to extend the definition and computation of right greatest common divisors and left least common multiples to any finite set of polynomials in \(R\). 

The factorization theory in Ore polynomials comes from the pioneering paper \cite[Chapter II]{Ore:1933}. It is proven in \cite[Chapter II, Theorem 1]{Ore:1933} that every non-unit polynomial has a representation as a product of irreducible factors, and this representation is unique up to similarity, where $f, g \in R$ are said to be \emph{similar}, $f \sim g$, if there is an isomorphism of left $R$-modules $R/Rf \cong R/Rg$ or, equivalently, of right $R$--modules $R/fR \cong R/gR$.

There is also a factorization theory for twosided polynomials. This theory mimics that of the commutative one as proven in \cite{Jacobson:1943}. 

Given a twosided polynomial of positive degree $\pi \in R$, the ideal $R \pi$ is maximal as a twosided ideal if and only if $R/R \pi$ is a simple Artinian ring. In this case $\pi$ is said to be \emph{prime twosided polynomial}. Therefore, \cite[Theorem 9, p. 38]{Jacobson:1943} may be rephrased by saying that every nonzero twosided polynomial $\alpha \in R$ factorizes uniquely, up to multiplication by nonzero elements of $D$ and reordering, as a product $\alpha = \pi_1 \cdots \pi_r$, where $\pi_1, \dots, \pi_r$ are prime twosided polynomials in $R$.  Note that a twosided polynomial $\pi$ is irreducible if and only if $R/R\pi$ is a skew field. Thus, any irreducible twosided polynomial is prime, but the converse fails in general. 

Let $M$ be a left $R$--module and $S \subseteq M$ any subset.   The annihilator of $S$  is defined as
\begin{displaymath}
\Ann_R(S) = \{ f \in R : fm = 0, \forall m \in S \},
\end{displaymath}
and it is a left ideal of $R$. When $N \subseteq M$ is a submodule then $\Ann_R(N)$ is  a twosided ideal.

\begin{definition}[{\cite[p. 38]{Jacobson:1943}} and {\cite[pp. 227]{Cohn:1971}}]
\emph{A bound} of $f \in R$ is a twosided polynomial $f^* \in R$ such that $Rf^* = f^*R$ is the largest twosided ideal contained in $Rf$ or, equivalently,  
\[
Rf^* = \Ann_R(R/Rf).
\] 
By \cite[Theorem 11, p. 39]{Jacobson:1943},  a  bound $f^*$ of $f$ could be equally defined by using the right ideal $fR$. If $f^* \neq 0$, then $f$ is said to be \emph{bounded}. Obviously, $f^*$ is determined up to multiplication (say on the left) by nonzero elements of $D$. For a more general perspective of bounded elements in 2-firs see \cite[\S 6.5]{Cohn:1971} and \cite{Leroy/Ozturk:2011}.
\end{definition}

\begin{remark}\label{latticefstar}
The left $R$--module $R/Rf$ is a left $R/Rf^*$--module in the obvious way, and, what is more, the lattice of left $R$--submodules of $R/Rf$ is, precisely, the lattice of its left  $R/Rf^*$--submodules.  In particular, we get that $f$ is irreducible if and only if $R/Rf$ is simple as a left $R/Rf^*$--module.  When $f^* \neq 0$, the ring $R/Rf^*$ is finite-dimensional as a left vector space over the  skew field  $D$, and, therefore, it is Artinian. In fact, it is a finite-dimensional algebra over a suitable commutative field in a wide class of examples (including the examples described in Remark \ref{sinderivacion}).
If $f$ is irreducible and bounded, then, by \cite[Theorem 13, p. 40]{Jacobson:1943}, $Rf^*$  is a prime twosided ideal and $R/Rf^*$ is a simple Artinian ring. 
We recommend Pierce's book \cite{Pierce:1982} for readers non familiar with noncommutative associative algebras.
\end{remark}

We propose two algorithms for computing a bound for a given bounded polynomial. The first one runs when $R$ is finitely generated as a module over its center {$C$, and a finite system of generators is known. The second one can be used when $R$ is finitely generated as an algebra over its center. Even if it is known that $R$ is finitely generated as a module over $C$ but no set of generators is available,  or even $C$ is itself unknown, see for instance Example \ref{ex:cyclotomic} or Section \ref{ratfunc}. Since both of them make use of the computation of annihilators of elements in $R/Rf$, we shall need the following lemma.

\begin{lemma}\label{annihilators}
Let $h, f \in R$ be nonzero Ore polynomials with $\deg h < \deg f$, and $f_h \in R$ such that $\llcm{f}{h} = f_h h$. Then $\Ann_R(h + Rf) = Rf_h$.  
\end{lemma}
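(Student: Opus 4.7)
My plan is to unwind the definitions and reduce the statement to the cancellation property of the integral domain $R$.

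First, I would observe that, by definition, $\Ann_R(h + Rf) = \{g \in R : gh \in Rf\}$. Since trivially $gh \in Rh$ for every $g \in R$, this can be rewritten as $\Ann_R(h + Rf) = \{g \in R : gh \in Rf \cap Rh\}$. Now I would invoke the characterization of the left least common multiple recalled in Section \ref{basics}: $Rf \cap Rh = R\,\lcm{f}{h} = Rf_hh$.

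Next, the key step: for $g \in \Ann_R(h + Rf)$ we have $gh = rf_hh$ for some $r \in R$. Because $R$ is a noncommutative integral domain (as recorded after equation \eqref{gradosylideres}) and $h \neq 0$, right cancellation of $h$ gives $g = rf_h \in Rf_h$. This establishes $\Ann_R(h + Rf) \subseteq Rf_h$.

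The reverse inclusion is immediate: if $g = rf_h$ for some $r \in R$, then $gh = rf_hh = r\,\lcm{f}{h} \in Rf$, so $g \in \Ann_R(h + Rf)$. I do not expect any significant obstacle here; the hypothesis $\deg h < \deg f$ is not really used in the argument itself (it only guarantees that $h + Rf$ is a nonzero element, and that the annihilator is a proper ideal, which matches the intuition that $f_h \neq 0$ and $\deg f_h < \deg f$).
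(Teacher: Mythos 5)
Your proof is correct and follows essentially the same route as the paper's: identify the annihilator with $\{g : gh \in Rf \cap Rh = Rf_hh\}$ and use cancellation of $h$ in the domain $R$ for one inclusion, with the converse being immediate. Your side remark that the hypothesis $\deg h < \deg f$ is not actually needed for the argument is also accurate.
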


\begin{proof}
If $g \in \Ann_R(h + Rf)$, then $gh \in Rf \cap Rh = R\llcm{f}{h} = Rf_h h$. Hence $gh = rf_h h$ for some $r \in R$. This implies that $g = r f_h \in Rf_h$, since $R$ is a domain. Conversely, if $g \in Rf_h$, then $gh \in Rf_h h = Rf \cap Rh \subseteq Rf$, whence $g \in \Ann_R(h + Rf)$.
\end{proof}

As a direct consequence of Lemma \ref{annihilators}, Algorithm \ref{LEEAAnn} computes a generator $f_h$ of the left ideal $\Ann_R(h + Rf)$ making use of a short version of the extended Euclidean Algorithm in \cite[Chapter I, Theorem 4]{Ore:1933}.

\begin{algorithm}
\caption{Annihilator of an element}\label{LEEAAnn}
\begin{algorithmic}
\REQUIRE $f, h \in D[X;\sigma,\delta]$ with $f \neq 0, h \neq 0$ 
\ENSURE $g \in D[X;\sigma,\delta]$ with $Rg = \Ann_R(h+Rf)$
\STATE $v_1  \assign 1$, $v_0  \assign 0$, $f_0 \assign f$, $f_1 \assign h$
\STATE $(c,r)  \assign \lquotrem{f_{0}}{f_{1}}$, $f_0  \assign f_1$, $f_1  \assign r$
\STATE $v  \assign - c$, $v_0  \assign v_1$, $v_1  \assign v$
\WHILE{$f_1 \neq 0$}
\STATE $(c,r)  \assign \lquotrem{f_{0}}{f_{1}}$, $f_0  \assign f_1$, $f_1  \assign r$
\STATE $v  \assign v_{0} - c v_{1}$, $v_0  \assign v_1$, $v_1  \assign v$
\ENDWHILE
\RETURN $v_1$
\end{algorithmic}
\end{algorithm}

 If $R$ is finitely generated as a module over its center $C$, then it is well known that every nonzero $f \in R$ is bounded.  For instance, making use of the Cayley-Hamilton theorem, it is possible to get a nonzero element $c \in Rf \cap C \subseteq Rf^*$. Although $c$ is a left multiple of $f^*$, some situations would require to compute exactly $f^*$. The following proposition provides a method for computing $f^*$.

\begin{proposition}\label{bound}
Assume that $R$ is generated, as a module over its center, by finitely many polynomials $c_1, \dots, c_r \in R$. Let $f \in R$ be a nonzero polynomial. Then a bound of $f$ is $f^* = \llcm{f,f_{c_1}, \dots}{f_{c_r}}$, where $Rf_{c_i} = \Ann_R(c_i + Rf)$. 
\end{proposition}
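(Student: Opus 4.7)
The bound of $f$ is, by definition, the generator of the twosided ideal $\Ann_R(R/Rf)$, so the plan is to reduce the computation of this annihilator to the $\Ann_R(c_i + Rf) = Rf_{c_i}$. Concretely, I would establish the chain
\[
\Ann_R(R/Rf) \;=\; \bigcap_{i=1}^{r} \Ann_R(c_i + Rf) \;=\; \bigcap_{i=1}^{r} Rf_{c_i},
\]
where the second equality is a direct application of Lemma \ref{annihilators} (after first reducing each $c_i$ modulo $Rf$, if $\deg c_i \geq \deg f$, to fit the lemma's hypothesis). Translating the intersection of principal left ideals into the iterated left lcm then delivers $f^* = [f_{c_1},\dots,f_{c_r}]_\ell$.

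The core of the argument is the first equality above. The inclusion $\subseteq$ is immediate, since each $c_i + Rf$ is an element of $R/Rf$. For $\supseteq$, let $g \in R$ satisfy $g c_i \in Rf$ for every $i$, and take an arbitrary $h \in R$. By the hypothesis on $R$, we may write $h = \sum_i z_i c_i$ with $z_i$ in the center $C$. The decisive move is to use centrality to commute $g$ past the $z_i$:
\[
gh \;=\; g\sum_i z_i c_i \;=\; \sum_i z_i (g c_i) \;\in\; Rf,
\]
since each $g c_i \in Rf$ and $Rf$ is a left ideal. Thus $g$ annihilates every element of $R/Rf$.

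For the final statement that every nonzero $f$ is bounded, I would observe that each $f_{c_i}$ is nonzero: if $c_i \in Rf$, then $\Ann_R(c_i + Rf) = R$ is generated by $1$, while otherwise Lemma \ref{annihilators} presents $f_{c_i}$ as a factor of the nonzero polynomial $\lcm{f}{r_i}$ with $r_i = \lrem{c_i}{f}$. Since $R$ is a left Ore domain (being a left PID without zero divisors), the iterated left lcm of finitely many nonzero polynomials is nonzero, whence $f^* \neq 0$. The only genuine subtlety is the centrality step: it is essential that the coefficients $z_i$ in the decomposition of $h$ belong to $C$ and not merely to $R$, since this is precisely what lets us push $g$ across them and invoke the hypothesis $g c_i \in Rf$; without this, the argument collapses.
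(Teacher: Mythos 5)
Your proof is correct and follows essentially the same route as the paper: the decisive step in both is to write an arbitrary $h \in R$ as $\sum_i z_i c_i$ with central coefficients and use centrality to move $g$ past the $z_i$, reducing $\Ann_R(R/Rf)$ to $\bigcap_i \Ann_R(c_i + Rf) = \bigcap_i Rf_{c_i} = R[f_{c_1},\dots,f_{c_r}]_\ell$. Your additional remarks (reducing $c_i$ modulo $Rf$ to fit Lemma \ref{annihilators}, and checking that each $f_{c_i}$ and hence the iterated left lcm is nonzero) merely spell out details the paper leaves implicit in the word ``Consequently''.
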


\begin{proof}
Let us first prove that $f^* = \llcm{f_{c_1}, \dots}{f_{c_r}}$. Since 
\[
R[f_{c_1}, \dots, f_{c_r}]_{\ell} = Rf_{c_1} \cap \cdots \cap Rf_{c_r} = \Ann_R(c_1 + Rf) \cap \cdots \cap \Ann_R(c_r + Rf)
\]
and $Rf^* = \Ann_R(R/Rf)$ we get that $Rf^* \subseteq R[f_{c_1}, \dots, f_{c_r}]_{\ell}$. 

For the other inclusion, observe that, given $g \in \Ann_R(c_1 + Rf) \cap \cdots \cap \Ann_R(c_r + Rf)$, and $h \in R$, we may write $h = \sum_{i= 1}^r h_i c_i$, where $h_1, \dots, h_r$ belong to the center of $R$. Thus, 
\[
g (h + Rf) = \sum_{i=1}^r g h_i c_i + Rf = \sum_{i=1}^r h_i g c_i + Rf = 0 + Rf
\]
since $g c_i + Rf = 0 + Rf$ for all $1 \leq i \leq r$.

Finally, since $f^*$ is a left multiple of $f$, we get $f^* = \llcm{f,f_{c_1}, \dots}{f_{c_r}}$.
\end{proof}

The correctness of Algorithm \ref{alg:bound} is a direct consequence of Proposition \ref{bound}. Since $Rf^* = R\llcm{f_{c_1}, \dots}{f_{c_r}}$, $g$ could been initialized to $f_{c_1}$ instead of $f$ in Algorithm \ref{alg:bound}. The initialization to $f$ has the advantage that, if $\llcm{f,f_{c_1},\dots}{f_{c_i}}$ is a twosided polynomial for $i < r$, no further pass through the \textbf{while} loop is needed, since $f^* = \llcm{f,f_{c_1},\dots}{f_{c_i}}$ becomes a bound of $f$. This behavior is illustrated by Example \ref{ex:quaternions}. 

Thus, if a fast criterion for checking whether a given polynomial is twosided is available, it can be added to the condition in the \textbf{while} loop in Algorithm \ref{alg:bound} to make it faster. Such a criterion is given by Theorem \ref{basecota}.(a), if the number $s$ of known generators of $R$ as an algebra over its center is small.

\begin{algorithm}
\caption{Computation of a bound I}\label{alg:bound}
\begin{algorithmic}
\REQUIRE $f \in R = D[X;\sigma,\delta]$ with $f \neq 0$. A finite set of generators $\{ c_1, \dots, c_r \}$ of $R$, as a module over its center.
\ENSURE $f^* \in R$ such that $\Ann_R(R/Rf) = Rf^* = f^*R$.
\STATE $g \assign f$, $i \assign 1$
\WHILE{$i \leq r$} 
\STATE compute $f_{c_i}$ such that $\Ann_R(c_i + Rf) = Rf_{c_i}$
\STATE $g \assign \llcm{f_{c_i}}{g}$
\STATE $i  \assign i+1$
\ENDWHILE
\RETURN $g$
\end{algorithmic}
\end{algorithm}

The idea of making use of a finite set of generators of $R$ as an algebra (instead of as a module) over its center $C$ is the basis for the second algorithm for computing a bound, Algorithm \ref{alg:bound2}. Even if it is known that $R$ is finitely generated, as a module, over its center, the number of generators $s$ as a $C$--algebra is expected to be small compared with the number $r$ of generators of $R$ as a $C$--module. For instance, if  $R = D[X;\sigma]$ with $D = K(a)$ is a simple field  extension of $K = D^\sigma$, $\sigma$ has finite order $\mu$, then $r = \mu^2$ and $s = 2$. So, if the rank of $R$ as a $C$-module is  large enough, it may be convenient to make use of Algorithm \ref{alg:bound2} for computing a bound, see Section \ref{complexity}. 

 Furthermore, Algorithm \ref{alg:bound2} does not require  the computation of a set of generators of $R$ as a $C$--module or even the knowledge of $C$ itself. For instance,
in Section \ref{ratfunc}, Algorithm \ref{InvSubfield} computes the invariant subfield by an automorphism \(\sigma\) of \(\ffield[q](t)\). Since Algorithm \ref{InvSubfield} is exponential in the order of \(\sigma\) in the worst case, the computation of the center of \(\ffield[q](t)[X;\sigma]\) can be very hard. But \(\ffield[q](t)[X;\sigma]\) is generated as an algebra over its center, whoever it is, by \(t\) and \(X\). Algorithm \ref{alg:bound2} provides a way for computing a bound without an explicit knowledge of the center, see Example \ref{ex:cyclotomic} and Example \ref{boundF(t)}.

\begin{theorem}\label{basecota}
Assume that  $R = D[X;\sigma,\delta]$ is generated, as an algebra over its center, by a finite set $\{d_1, \dots, d_s \}$. Let $f \in R$ a nonzero  polynomial. 
\begin{enumerate}[{\indent a)}]
\item $Rf = Rf^*$ if and only if $\lrem{f d_i}{f} = 0$ for all $1 \leq i \leq s$.
\item If $\lrem{f d_{i_0}}{f} \neq 0$ for some $1 \leq i_0 \leq s$, then $Rf^* = R\llcm{f_{d_{i_0}}}{f}^*$, where $f_{d_{i_0}}$ is such that $Rf_{d_{i_0}} = \Ann_R(d_{i_0} + Rf)$, and $R\llcm{f_{d_{i_0}}}{f} \subsetneq Rf$. 
\end{enumerate}
\end{theorem}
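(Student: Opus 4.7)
The plan is to reduce both statements to a simple two-sidedness criterion using the fact that $R$ is generated as a $C$-algebra by $\{d_1,\dots,d_s\}$ and that $Rf$ is automatically stable under right multiplication by the center $C$.

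\textbf{For part (a).} First I would observe that a left ideal $I \subseteq R$ is twosided if and only if $Id_i \subseteq I$ for every $i=1,\dots,s$. One inclusion is trivial; for the other, any $r \in R$ is a $C$-polynomial in $d_1,\dots,d_s$, and since center elements commute and the $d_i$-stability propagates through products, $Ir \subseteq I$ follows. Applying this to $I = Rf$, the condition $Rf \cdot d_i \subseteq Rf$ is equivalent to $fd_i \in Rf$, i.e.\ $\lrem{fd_i}{f} = 0$. Finally, $Rf$ is twosided if and only if $Rf = Rf^*$, because $Rf^*$ is defined as the largest twosided ideal contained in $Rf$, so $Rf$ being itself twosided forces equality.

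\textbf{For part (b).} Set $m = \lcm{f_{d_{i_0}}}{f}$, so that $Rm = Rf_{d_{i_0}} \cap Rf$ by definition of the left least common multiple. The inclusion $Rm \subseteq Rf$ is automatic; it is strict precisely when $f \notin Rf_{d_{i_0}}$. But $f \in Rf_{d_{i_0}} = \Ann_R(d_{i_0} + Rf)$ is equivalent to $fd_{i_0} \in Rf$, i.e.\ to $\lrem{fd_{i_0}}{f} = 0$, which fails by hypothesis. Hence $Rm \subsetneq Rf$.

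It remains to verify $Rf^* = Rm^*$ by two inclusions. For $Rm^* \subseteq Rf^*$: since $Rm^*$ is twosided and $Rm^* \subseteq Rm \subseteq Rf$, the maximality property of $Rf^*$ yields the inclusion. For $Rf^* \subseteq Rm^*$: the key trick is to exploit that $f^*$ is twosided, so $f^* R = Rf^*$. Then
\[
f^* d_{i_0} \in f^* R = Rf^* \subseteq Rf,
\]
which means $f^* \in \Ann_R(d_{i_0} + Rf) = Rf_{d_{i_0}}$. Combined with $Rf^* \subseteq Rf$, this gives $Rf^* \subseteq Rf \cap Rf_{d_{i_0}} = Rm$. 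Since $Rf^*$ is twosided, its maximality inside $Rm$ yields $Rf^* \subseteq Rm^*$, completing the equality.

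There is no real obstacle here: the proof is just bookkeeping. The one subtlety worth highlighting is the interplay of the twosidedness of $f^*R = Rf^*$ with the definition of $f_{d_{i_0}}$ as an annihilator, which is what makes the inclusion $Rf^* \subseteq Rf_{d_{i_0}}$ drop out for free and drives the inductive shrinking used by Algorithm \ref{alg:bound2}.
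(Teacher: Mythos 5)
Your proof is correct. Part (a) is essentially the paper's argument: the paper works elementwise, writing $fd_i=\widetilde{d_i}f$ and propagating through products $d_{i_1}\cdots d_{i_m}$ to get $fR\subseteq Rf$, while you phrase the same mechanism as ``a left ideal is twosided iff it is stable under right multiplication by each $d_i$''; the content is identical. In part (b) your treatment of $R\lcm{f_{d_{i_0}}}{f}\subsetneq Rf$ and of the inclusion $R\lcm{f_{d_{i_0}}}{f}^*\subseteq Rf^*$ matches the paper, but for the reverse inclusion you take a genuinely more direct route. The paper first upgrades $Rf^*\subseteq Rf_{d_{i_0}}$ to $Rf^*\subseteq Rf_{d_{i_0}}^*$, identifies $Rf^*=\Ann_R\bigl((R/Rf)\oplus(R/Rf_{d_{i_0}})\bigr)$, and then pushes this through the canonical embedding $R/R\lcm{f_{d_{i_0}}}{f}\hookrightarrow (R/Rf_{d_{i_0}})\oplus(R/Rf)$ to land inside $\Ann_R(R/R\lcm{f_{d_{i_0}}}{f})=R\lcm{f_{d_{i_0}}}{f}^*$. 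You instead observe that $f^*d_{i_0}\in f^*R=Rf^*\subseteq Rf$ places $f^*$ in $\Ann_R(d_{i_0}+Rf)=Rf_{d_{i_0}}$, hence $Rf^*\subseteq Rf\cap Rf_{d_{i_0}}=R\lcm{f_{d_{i_0}}}{f}$, and then conclude by the defining maximality of $R\lcm{f_{d_{i_0}}}{f}^*$ among twosided ideals contained in $R\lcm{f_{d_{i_0}}}{f}$. This is shorter and avoids the direct-sum/annihilator bookkeeping entirely (your $Rf^*\subseteq Rf_{d_{i_0}}$ is the same fact the paper gets from $\Ann_R(R/Rf)\subseteq\Ann_R(d_{i_0}+Rf)$); what the paper's longer computation buys is the explicit identification of $Rf^*$ as the annihilator of the direct sum, which is not needed for the statement itself. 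Both arguments are complete and rest on the same two facts: the bound is the largest twosided ideal inside the given left ideal, and $Rf_{d_{i_0}}$ is an annihilator.
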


\begin{proof}
a) Observe that $Rf = Rf^*$ if and only if $Rf$ is a twosided ideal, and this is equivalent to the inclusion  $fR \subseteq Rf$. Assume $fR \subseteq Rf$. Then, for all $1 \leq i \leq s$, there exists $\widetilde{d_i} \in R$ such that $f d_i = \widetilde{d_i}f$. By the uniqueness of the remainder in a left Euclidean division, this means that $\lrem{f d_i}{f} = 0$ for all $1 \leq i \leq s$. 
Conversely, assume that $\lrem{f d_i}{f} = 0$ for all $1 \leq i \leq s$. This means that $f d_i = \widetilde{d_i}f$ for all $1 \leq i \leq s$, where $\widetilde{d_i} \in R$. Clearly, this implies that $f d_{i_1} \cdots d_{i_m} = \widetilde{d_{i_1}} \cdots \widetilde{d_{i_m}} f$ for every $i_1, \dots, i_m \in \{ 1, \dots, s \}$. Since $d_1, \dots, d_s$ are assumed to be generators of $R$ as an algebra over its center, we see that $fR \subseteq Rf$. 

b) If $\lrem{f d_{i_0}}{f} \neq 0$, then $f \notin \Ann_R(d_{i_0} + Rf) = R f_{d_{i_0}}$. Hence $R \llcm{f_{d_{i_0}}}{f} = Rf_{d_{i_0}} \cap Rf \subsetneq Rf$ and therefore $R\llcm{f_{d_{i_0}}}{f}^* \subseteq Rf^*$. On the other hand, taking annihilators in the canonical injective homomorphism of left $R$--modules
\[
\frac{R}{R\llcm{f_{d_{i_0}}}{f}} = \frac{R}{Rf_{d_{i_0}} \cap Rf} \to \frac{R}{Rf_{d_{i_0}}} \oplus \frac{R}{Rf},
\]
we get 
\[
\Ann_R\left( \frac{R}{Rf} \oplus \frac{R}{Rf_{d_{i_0}}} \right) \subseteq \Ann_R \left( \frac{R}{R\llcm{f_{d_{i_0}}}{f}} \right) = R\llcm{f_{d_{i_0}}}{f}^*.
\]
Moreover, $Rf^* = \Ann_R(R/Rf) \subseteq \Ann_R(d_{i_0} + Rf) = R f_{d_{i_0}}$, so $R f^* \subseteq R f_{d_{i_0}}^*$ and,  therefore, 
\[
R f^* = R f^* \cap R f_{d_{i_0}}^* = \Ann_R \left( \frac{R}{Rf} \right) \cap \Ann_R \left( \frac{R}{Rf_{d_{i_0}}} \right) = \Ann_R \left( \frac{R}{Rf} \oplus \frac{R}{Rf_{d_{i_0}}} \right).
\]
Then $Rf^* \subseteq R\llcm{f_{d_{i_0}}}{f}^*$. This finishes the proof.
\end{proof}

\begin{theorem}\label{thm:bound2}
Assume that  $R = D[X;\sigma,\delta]$ is generated, as an algebra over its center, by a finite set $\{d_1, \dots, d_s \}$. Let $f \in R$ be a bounded  polynomial. Algorithm \ref{alg:bound2} correctly computes a bound $f^*$ of $f$.
\end{theorem}
\begin{proof}
In view of Theorem \ref{basecota}, we must only argue why Algorithm \ref{alg:bound2} terminates. Since $f^* \neq 0$, we know that $R/Rf^*$ is an Artinian ring (and a left $R$--module of finite length, of course). On the other hand, in part b) of Theorem \ref{basecota}, $Rf^* \subseteq R\llcm{f_{d_{i_0}}}{f} \subsetneq Rf$. Therefore, the number of times that the \textbf{while} loop of Algorithm \ref{alg:bound2} runs is bounded by the (finite) length of $R/Rf^*$ as a left $R$-module.  Observe that this length is lower or equal than $\deg f^*$. 
\end{proof}

\begin{algorithm}
\caption{Computation of a bound II}\label{alg:bound2}
\begin{algorithmic}
\REQUIRE $f \in R = D[X;\sigma,\delta]$ with $f^* \neq 0$. A finite set of generators $\{ d_1, \dots, d_s \}$ of $D[X;\sigma,\delta]$ as an algebra over its center
\ENSURE $f^* \in R = D[X;\sigma,\delta]$ such that $\Ann_R(R/Rf) = Rf^* = f^*R$
\STATE $g \assign f$
\STATE $i \assign 1$
\WHILE{$i \leq s$}
\IF{$\lrem{g d_i}{g} = 0$}
\STATE $i \assign i + 1$
\ELSE
\STATE $g \assign \llcm{g_{d_i}}{g}$ with $Rg_{d_i} = \Ann_R(d_i + Rg)$
\STATE $i \assign 1$
\ENDIF
\ENDWHILE
\RETURN $g$
\end{algorithmic}
\end{algorithm}

The running time of Algorithm \ref{alg:bound2} will depend on how many times the left least common multiple $\llcm{g_d}{g}$ has to be computed. In each of these computations the degree of $g$ strictly increases, hence if we had an estimation of the degree of a bound $f^*$ in terms of the degree of the given polynomial $f$, we could deduce an upper bound for the number of left least common multiples to be computed. 
 Theorem \ref{cotabound}  asserts that, under rather general} conditions, the degree of a bound of a given polynomial $f$ can be estimated.  Its proof  will also provide a fast criteria for the irreducibility.  We shall need the following lemmata. 

\begin{lemma}\label{freeness2factor}
Assume $R$ is a finitely generated free module of rank $r$ over its center $C$ and let $\alpha \in C$. Then $R/R\alpha$ is  a  free  module  of rank $r$ over $C/C\alpha$. Moreover, if $\alpha$ is a prime twosided polynomial in $R$, then $C/C\alpha$ is a (commutative) field. 
\end{lemma}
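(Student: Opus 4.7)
The plan is to treat the two assertions separately: first the freeness of $R/R\alpha$ over $C/C\alpha$ for arbitrary $\alpha\in C$, and then the field property under the additional assumption that $\alpha$ is prime twosided.

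For the freeness, I would fix a $C$-basis $c_1,\dots,c_r$ of $R$ and exploit centrality of $\alpha$. Since $\alpha$ commutes with each $c_i$, the decomposition $R=\bigoplus_{i=1}^r Cc_i$ yields $R\alpha=\alpha R=\bigoplus_{i=1}^r C\alpha\, c_i$. Centrality also ensures that $\alpha$ annihilates $R/R\alpha$, so this quotient carries a well-defined $C/C\alpha$-module structure, and passing to the quotient componentwise gives $R/R\alpha \cong \bigoplus_{i=1}^r (C/C\alpha)\bar{c}_i$, a free $C/C\alpha$-module of rank $r$.

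Now assume $\alpha$ is a prime twosided polynomial. Then $R\alpha$ is a maximal twosided ideal, so $A:=R/R\alpha$ is a simple Artinian ring, and its center $Z$ is therefore a commutative field. I would next show that the natural homomorphism $C/C\alpha\to A$ is injective, which amounts to the identity $C\cap R\alpha=C\alpha$. The nontrivial inclusion uses both centrality of $\alpha$ and the absence of zero divisors in $R$: if $c=y\alpha\in C$ for some $y\in R$, then $c$ and $\alpha$ being central force $(yr-ry)\alpha=0$ for every $r\in R$, so $y\in C$ and $c\in C\alpha$. This makes $C/C\alpha$ a subring of the field $Z$, hence a commutative integral domain. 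To upgrade from integral domain to field I would invoke the classical fact that in an integral extension of integral domains, the smaller ring is a field if and only if the larger one is. Since the first part exhibits $A$ as a finitely generated $C/C\alpha$-module, $A$ is integral over $C/C\alpha$, and so is its subring $Z$; applying the classical fact to $C/C\alpha\subseteq Z$ finishes the argument.

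The main obstacle I anticipate is the innocent-looking identity $C\cap R\alpha=C\alpha$. It truly needs both the centrality of $\alpha$ and the domain property of $R$, and cannot simply be read off the freeness decomposition of the first paragraph without tacitly assuming that $1$ lies among the chosen basis elements $c_i$, which is not available in general. The remaining ingredients are standard module theory, Artin--Wedderburn, and a well-known fact on integral extensions.
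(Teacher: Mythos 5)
Your argument is correct, and its overall skeleton matches the paper's: prove freeness from the $C$-basis (the paper phrases this as $R\otimes_C(C/C\alpha)\cong R/R\alpha$, which is the basis-free version of your componentwise computation), verify $C\cap R\alpha=C\alpha$ using centrality and the domain property to embed $C/C\alpha$ into the center $Z$ of the simple Artinian ring $A=R/R\alpha$, note $Z$ is a field, and finish with the classical fact that in an integral extension of domains the bottom ring is a field iff the top one is (the paper cites Atiyah--Macdonald, Proposition 5.7). Where you genuinely diverge is in how integrality is obtained. The paper first proves that $C/C\alpha$ is Noetherian, invoking \cite[Corollary 1.1.4]{McConnell/Robson:1988} (an Eakin--Nagata-type result, using that $R$ is Noetherian and module-finite over $C$), so that $Z$, being a $C/C\alpha$-submodule of the finitely generated module $R/R\alpha$, is itself a \emph{finite} extension of $C/C\alpha$. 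You bypass this entirely: since $A$ is a finitely generated (indeed free) $C/C\alpha$-module and $C/C\alpha$ is central in $A$, the determinant trick applied to the faithful module $A$ over the commutative subring $(C/C\alpha)[x]$ shows every element of $A$, in particular of $Z$, is integral over $C/C\alpha$, with no Noetherian hypothesis and no finiteness claim about $Z$ itself. This is a mild but real simplification, trading the external citation for a standard Cayley--Hamilton argument; the paper's route buys the extra information that $Z$ is module-finite over $C/C\alpha$, which it does reuse implicitly elsewhere (e.g.\ in the proof of Theorem \ref{cotabound}, where $F$ is treated as finite-dimensional over $C/Cf^*$), whereas yours is the leaner path to exactly the statement of the lemma. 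Your observation that $C\cap R\alpha=C\alpha$ needs both centrality of $\alpha$ and the absence of zero divisors is exactly the content behind the paper's ``it is easily checked'' remark.
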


\begin{proof}
Since $R$ is a domain, it is easily checked that $C\alpha = R\alpha \cap C$, whence we have the obvious monomorphism of rings $C/C\alpha \rightarrow R/R\alpha$, which makes $R/R\alpha$ an algebra over $C/C\alpha$. On the other hand, $R \otimes_C(C/C\alpha) \cong R/R\alpha$ as $C/C\alpha$--modules. Since $R$ is free of rank $r$ as a $C$--module, and $R \otimes_C -$ preserves direct sums, it follows that $R/R\alpha$ is free of rank $r$ as a $C/C\alpha$--module. Since $R$ is a finitely generated free $C$--module and $R$ is a (left) Noetherian ring, it follows  from \cite[Corollary 1.1.4]{McConnell/Robson:1987}  that $C$ is Noetherian and, therefore, $C/C\alpha$ is Noetherian. Now,  $C/C\alpha$ is clearly contained in the center $Z$ of $R/R\alpha$. Therefore, $Z$ becomes a $C/C\alpha$--submodule of $R/R\alpha$. Since $C/C\alpha$ is Noetherian, it follows that $Z$ is finitely generated as a $C/C\alpha$--module. Now, if $\alpha$ is a prime twosided  element of $R$, then $R/R\alpha$ is a simple Artinian algebra and, thus, its center $Z$ is a field, and it is a finite extension of $C/C\alpha$. Then $C/C\alpha$ is a field (see, e.g. \cite[Proposition 5.7]{Atiyah/Macdonald:1969}). 
\end{proof}

\begin{lemma} \cite{Jacobson:1943}\label{boundindecomposable}
Let \(f \in R\) be bounded such that \(R/Rf\) is indecomposable. Then there exist a prime twosided polynomial \(\pi \in R\) and irreducible elements \(p_1, \dots, p_l \in R\) such that \(f = p_1 \dots p_l\), \(p_i^* = \pi\) for all \(1 \leq i \leq l\) and \(Rf^* = (R\pi)^l\). 
\end{lemma}

\begin{proof}
By \cite[Theorem 13, p. 40]{Jacobson:1943}, $Rf^* = (R\pi)^e$ for some maximal twosided ideal $R\pi$.  Take a factorization  $f = p_1 \cdots p_l$ into irreducible polynomials $p_i$. For all $1 \leq i \leq l$, $Rf^* \subseteq Rp_i^*$, and $Rp_i^*$ is a maximal twosided ideal by \cite[Theorem 13, p. 40]{Jacobson:1943}. Thus $R\pi = Rp_i^*$ for all $1 \leq i \leq l$. By \cite[Theorem 21, p. 45]{Jacobson:1943}, $l = e$
\end{proof}

\begin{theorem}\label{cotabound}
Assume that $R$ is a finitely generated free module of rank $r$ over its center $C$.  Let  $f \in R$ with bound $f^* \in C$. Then $\deg f^* \leq \sqrt{r} \deg f$. 
\end{theorem}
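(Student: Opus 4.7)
The plan is to reduce to the case of irreducible $f$ by factoring. I would write $f = q_1 q_2 \cdots q_l$ with each $q_i \in R$ irreducible; by the multiplicativity of degree \eqref{gradosylideres}, $\deg f = \sum_i \deg q_i$. Consider the chain of left ideals
\[
R \supseteq R q_l \supseteq R q_{l-1} q_l \supseteq \cdots \supseteq R q_1 q_2 \cdots q_l = Rf.
\]
Because $R$ is a domain, the assignment $r + R q_k \mapsto r q_{k+1} \cdots q_l + R q_k q_{k+1} \cdots q_l$ defines a left $R$-module isomorphism $R/R q_k \cong R q_{k+1} \cdots q_l / R q_k q_{k+1} \cdots q_l$. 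Since $q_k^*$ annihilates $R/R q_k$, descending through the filtration shows that the product $q_1^* q_2^* \cdots q_l^*$ annihilates $R/Rf$ and hence lies in $Rf^*$. Therefore $\deg f^* \leq \sum_i \deg q_i^*$, and it suffices to prove $\deg q^* \leq \sqrt{r}\,\deg q$ for each irreducible $q \in R$.

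Let $q$ be irreducible. By Remark~\ref{latticefstar}, $q^*$ is a prime twosided polynomial, so $A := R/R q^*$ is simple Artinian. Put $\mathfrak{m} = C \cap R q^*$ and $\overline{C}_q = C/\mathfrak{m}$; this is a domain embedded in the center $Z(A)$, which is a field. Adapting the argument in the proof of Lemma~\ref{freeness2factor}: $C$ is Noetherian by \cite[Corollary~1.1.4]{McConnell/Robson:1988}, hence so is $\overline{C}_q$; $A$ is generated as a $\overline{C}_q$-module by the $r$ images of a $C$-basis of $R$, so the submodule $Z(A)$ is also finitely generated over $\overline{C}_q$ and is therefore integral over it; \cite[Proposition~5.7]{Atiyah/Macdonald:1969} then forces $\overline{C}_q$ to be a field. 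Writing $A \cong M_n(\Delta)$ as $Z(A)$-algebras for some skew field $\Delta$, the identity $n^2 \dim_{Z(A)} \Delta \cdot [Z(A) : \overline{C}_q] = \dim_{\overline{C}_q} A \leq r$ yields $n \leq \sqrt{r}$.

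Since $R/R q$ is a simple $R$-module, it is isomorphic to the unique simple $A$-module $S$, and the column decomposition of the matrix algebra gives ${}_A A \cong S^n$ as left $A$-modules. Because $D \hookrightarrow A$ (as $D \cap R q^* = 0$ for degree reasons), comparing left $D$-dimensions gives
\[
\deg q^* = \dim_D A = n \dim_D S = n \deg q \leq \sqrt{r}\,\deg q.
\]
Summing over the irreducible factors, $\deg f^* \leq \sqrt{r}\sum_i \deg q_i = \sqrt{r}\,\deg f$. The main obstacle is the rigorous verification that $\overline{C}_q$ is a field in this generality (without assuming a priori that $q^* \in C$); this requires the Noetherian-plus-integral-extension argument adapted from Lemma~\ref{freeness2factor}. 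The remaining steps are a routine application of Wedderburn's structure theorem together with a dimension count.
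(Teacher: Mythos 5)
Your proof is correct, but it follows a genuinely different route from the paper's in both main steps. For the reduction to irreducible factors, the paper inducts on the number of indecomposable Krull--Schmidt summands of $R/Rf$, treating the indecomposable case via $Rf^*=(R\pi)^e$ together with \cite[Theorem 21, p.~45]{Jacobson:1943}, and the decomposable case via $Rf^*\supseteq Rg^*h^*$; your filtration of $R/Rf$ with cyclic quotients $R/Rq_k$, which gives $q_1^*\cdots q_l^*\in\Ann_R(R/Rf)=Rf^*$ and hence $\deg f^*\leq\sum_i\deg q_i^*$ in one stroke, avoids Krull--Schmidt and the multiplicity theorem altogether (you should just say explicitly that each $q_i^*\neq 0$, which is immediate from Proposition \ref{bound} since $R$ is finitely generated over $C$, so the degree comparison in the domain $R$ is legitimate). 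For the irreducible case, the paper applies Lemma \ref{freeness2factor} to the central element $f^*$ and counts dimensions over the field $C/Cf^*$, getting the exact rank $r$; you instead work with $\overline{C}_q=C/(C\cap Rq^*)$, prove it is a field by the Noetherian-plus-integrality argument, and settle for the inequality $\dim_{\overline{C}_q}A\leq r$, which suffices. This variant is a genuine strengthening in a useful direction: when the paper's general-case argument invokes the irreducible case for the factors $p_i$, their bounds $p_i^*$ need not lie in $C$ (cf.\ Remark \ref{sinderivacion}), a point the paper's proof passes over, whereas your argument requires no centrality of $q^*$, so your reduction is airtight. What the paper's route buys in exchange is the structural information from the indecomposable case ($Rf^*=(R\pi)^e$ and $l=e$), which is reused later, for instance in the proof of Proposition \ref{boundmaxdegree}.
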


\begin{proof}
Let us first prove the inequality when $f$ is irreducible. By Remark \ref{latticefstar}, $R/Rf^*$ is a simple Artinian ring. Since $R/Rf$ is simple as a left $R/Rf^*$--module, we have that 
\begin{equation}\label{WAD}
\frac{R}{Rf^*} \cong \frac{R}{Rf} \oplus \overset{(m)}{\cdots} \oplus \frac{R}{Rf}
\end{equation}
as left $R/Rf^*$--modules. 
By  Lemma \ref{freeness2factor}, $C/Cf^*$ is a field, and $R/Rf^*$ is a simple Artinian algebra of dimension $r$ over $C/Cf^*$.  Let $F = \rend{R/Rf^*}{R/Rf} = \rend{R}{R/Rf}$, which is a skew field over $C/Cf^*$ by Schur's Lemma. Now, $R/Rf^*$ is a left vector space over $F$ of dimension $m^2$. Therefore,
\begin{equation}\label{mr}
m^2 = \dim_F R/Rf^* = \frac{\dim_{C/Cf^*}R/Rf^*}{\dim_{C/Cf^*}F} = \frac{r}{\dim_{C/Cf^*}F} \leq r,
\end{equation}
then $m \leq \sqrt{r}$. On the other hand, by \eqref{WAD}, we obtain
\begin{equation}\label{eq:simple}
\deg f^* = \dim_D R/Rf^* = m \dim_D R/Rf = m \deg f \leq \sqrt{r} \deg f.
\end{equation}
For a general polynomial $f$, we prove that $\deg f^* \leq \sqrt{r} \deg f$ by induction on the number of indecomposable direct summands in a Krull-Schmidt decomposition of the left $R$--module $R/Rf$. So, we first assume that $R/Rf$ is indecomposable. 
By Lemma \ref{boundindecomposable} \(f = p_1 \dots p_l\) where \(p_i\) is irreducible, \(p_i^* = \pi\) for a prime twosided polynomial, and \(Rf^* = (R \pi)^l\). We already know that $\deg p_i^* \leq \sqrt{r} \deg p_i$ for each $i = 1, \dots, l$. Therefore, 
\begin{equation}\label{eq:ind}
\sqrt{r} \deg f = \sqrt{r} \sum_{i=1}^l \deg p_i \geq l \deg \pi = \deg f^*.
\end{equation}
Thus, the first step of the induction is done.  Assume now that  $R/Rf$ is not indecomposable.  Then $R/Rf \cong R/Rg \oplus R/Rh$, with $g, h$ non constant bounded polynomials.  Obviously, the number of indecomposable direct summands in a Krull-Schmidt decomposition of $R/Rg$ and $R/Rh$ is strictly smaller than that of $R/Rf$.  Making use of the induction hypothesis, we get
\begin{equation}\label{eq:bound}
\begin{aligned}
 \sqrt{r} \deg f & = \sqrt{r} \dim_D R/Rf \\
 & = \sqrt{r} (\dim_D R/Rg+ \dim_D R/ Rh) \\
 & =  \sqrt{r}(\deg g + \deg h) \geq \deg g^* + \deg h^* \\
 & = \deg g^*h^* \\
 & \geq  \deg f^*,
\end{aligned}
\end{equation}
 where the last inequality  follows  from $Rf^* = Rg^* \cap Rh^* \supseteq Rg^*h^*$. 
\end{proof}

\begin{remark}\label{sinderivacion}
One class of Ore extensions fulfilling the conditions of Theorem \ref{cotabound} is $R= D[X;\sigma]$, where $D$ is a  skew field  which is finite-dimensional over its center $C(D)$,  and $\sigma$ is an automorphism of $D$ of finite order $\mu$ modulo an inner automorphism $\alpha \mapsto u \alpha u^{-1}$ where $u \in D\setminus\{0\}$. Let $D^{\sigma}$ be the invariant skew subfield under $\sigma$. By \cite[Theorem 2.8]{Lam/Leroy:1988} or \cite[Theorem 1.1.22]{Jacobson:1996}, the center of $R$ is $C = K[z]$, with $K=C(D) \cap D^\sigma$ and $z= u^{-1} X^\mu$ (see also \cite{Cauchon:1977}). It turns out that $D$ has finite dimension over $K$ and that $R$ is free of finite rank over $C$. Moreover, a bound $f^*$  of a nonzero polynomial $f \in R$ is of the form $f^* = d \widehat{f} X^m$ for some $m \geq 0$, nonzero $d \in D$, and $\widehat{f} \in C$. Let us analyze those cases in which $m \geq 1$.
\end{remark}

\begin{lemma}\label{quitoXm}
Let \(R = D[X;\sigma]\) and \(f \in R \setminus\{0\}\). If $f^* = d \widehat{f} X^m$ then $f = g X^m$ for some $g \in R$ such that $g^* = d \widehat{f}$.
\end{lemma}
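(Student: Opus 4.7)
The plan is to exploit the $X$-adic valuation $v$ on $R$, which is well-defined and additive on products because $\delta=0$ gives $X^kr=\tilde\sigma^k(r)X^k$ for any $r\in R$ (where $\tilde\sigma$ denotes the natural extension of $\sigma$ to $R$). I would proceed in two steps: first show $X^m$ divides $f$ on the right, then identify the bound of the quotient.

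For the first step, write $k=v(f)$ and $f=f_1X^k$ with $v(f_1)=0$. The key observation is that, for any nonzero $h\in R$ and any $j\geq 0$, since $Rh^*$ is a two-sided ideal contained in $Rh$ one has $h^*X^j\cdot r = h^*\tilde\sigma^j(r)X^j \in Rh^*X^j \subseteq R(hX^j)$ for every $r\in R$; hence $h^*X^j$ annihilates $R/R(hX^j)$ and belongs to $R(hX^j)^*$. Applying this with $h=f_1$ and $j=k$ gives $f_1^*X^k=sf^*$ for some $s\in R$, and taking valuations yields $v(f_1^*)+k=v(s)+m+v(\widehat{f})$, so $k\geq m+v(\widehat{f})-v(f_1^*)$. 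To deduce $k\geq m$ I need $v(f_1^*)=0$, which is the heart of the argument.

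The subclaim I would prove, and which I expect to be the main obstacle, is: if $v(h)=0$ then $h^*$ can be chosen in $C$ with $v(h^*)=0$. Indeed, $\rgd{h}{X}=1$ forces $Rh+RX=R$, so left multiplication by $X$ is a surjective $K$-linear endomorphism of the finite-dimensional $K$-vector space $R/Rh$ (using $K\subseteq D^\sigma$ so that $X$ commutes with $K$), hence bijective; consequently $dX^m$ acts bijectively on $R/Rh$ for every $d\in D^*$ and $m\geq 0$. From the decomposition $h^*=dX^m\widehat{h}$ supplied by Remark \ref{sinderivacion}, the bijectivity of $dX^m$ combined with $h^*$ annihilating $R/Rh$ forces $\widehat{h}$ to annihilate $R/Rh$, so $\widehat{h}\in Rh^*=RdX^m\widehat{h}$, and since $R$ is a domain this yields $1=sdX^m$; comparing degrees gives $m=0$. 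Then factoring the maximal power of $z=u^{-1}X^\nu$ out of $\widehat{h}$ produces a new decomposition $h^*=d'X^{\nu j}\widehat{h}'$ and the same argument forces $j=0$, whence $v(\widehat{h})=0$. Applied to $h=f_1$ this gives $v(f_1^*)=0$, so $k\geq m$ and $f=gX^m$ for some $g\in R$.

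For the second step, from $f^*=qf=qgX^m$ and $f^*=d\widehat{f}X^m$ (centrality of $\widehat{f}$), cancelling $X^m$ on the right yields $d\widehat{f}=qg$, so $\widehat{f}\in Rg$ and therefore $R\widehat{f}\subseteq Rg^*$ because $R\widehat{f}$ is two-sided and contained in $Rg$. Conversely, the key observation applied with $h=g$ and $j=m$ gives $g^*X^m\in Rf^*=R\widehat{f}X^m$, so $g^*\in R\widehat{f}$ after cancelling $X^m$. Combining both inclusions, $Rg^*=R\widehat{f}=Rd\widehat{f}$, which identifies $g^*$ with $d\widehat{f}$ up to a nonzero element of $D$.
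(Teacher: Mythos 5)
Your proof is correct, and it takes a genuinely different route from the paper's. The paper peels off one factor of $X$ at a time: since $RX$ is a completely prime twosided ideal, $X \mid_r qf$ forces $X \mid_r q$ or $X \mid_r f$; the first alternative is ruled out by a degree count, so $f = f'X$, and a second degree comparison yields $(f')^* = dX^{m-1}\widehat{f}$, after which the step is iterated $m$ times. You instead argue non-inductively via the $X$-adic valuation, using two auxiliary facts the paper never states: that $h^*X^j \in R\bigl((hX^j)^*\bigr)$ for every $j$, and that a polynomial $h$ with $\rgd{h}{X}=1$ has a bound of zero valuation, proved through the bijectivity of the left action of $X$ on the finite-dimensional $K$-space $R/Rh$. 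These give $v(f)\geq m$ in one stroke, and you identify $g^*$ through the two twosided-ideal inclusions $R\widehat{f}\subseteq Rg^*$ and $Rg^*\subseteq R\widehat{f}$ rather than by degree comparisons. What your route buys is the absence of an induction and, as a by-product, a direct proof of the fact that $\rgd{h}{X}=1$ forces the bound to be central up to a scalar of $D$ — a statement the paper only obtains later as a consequence of this very lemma (in the remark before Proposition \ref{irr_finitefield}), so your argument is not circular. What it costs is reliance on the finite-dimensionality of $D$ over $K$ (available in the setting of Remark \ref{sinderivacion}) to pass from surjectivity to injectivity, whereas the paper needs only the primeness of $RX$ and degree bookkeeping. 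Two small points worth making explicit in your write-up: surjectivity of the $X$-action on $R/Rh$ uses $XR=RX$ (true here because $\delta=0$), and additivity of $v$ uses that the lowest-order coefficients multiply to a nonzero element of the skew field $D$.
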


\begin{proof}
Obviously, $f^* = qf$ for some $q \in R$. Assume $m \geq 1$, then $X \mid_r qf$ and hence $X \mid_r q$ or $X \mid_r f$. If $X \mid_r q$ then $q = q_0 X = X q_1$, so $X q_1 f = q f = X^m d' \widehat{f}$ and so $q_1 f = X^{m-1} d' \widehat{f}$, a twosided element. It follows that $f^* \mid X^{m-1} d' \widehat{f}$, but this is impossible since $\deg f^* = m + \deg \widehat{f}$. Therefore $X \mid_r f$ and $f = f' X$. Then $q f' X = qf = f^* = d \widehat{f} X^m$, which implies $qf' = d \widehat{f} X^{m-1}$, and so $(f')^* \mid_r d \widehat{f} X^{m-1}$. Now $(f')^* X = q' f' X = q' f$ for some $q'$, therefore $f^* \mid_r (f')^*X$. If $\deg(f')^* < \deg \widehat{f} + m-1$, then $\deg f^* \leq \deg (f')^* + 1 < \deg \widehat{f} + m - 1 + 1 = \deg f^*$, a contradiction. Hence $(f')^* = d X^{m-1} \widehat{f}$. Repeating this process $m$ times we get $f = g X^m$ and $g^* = d \widehat{f}$ as desired. 
\end{proof}

Observe that Theorem \ref{cotabound} works since there is a bound in the center. When $R$ is under the conditions of Remark \ref{sinderivacion}, we do not need this restriction as the following corollary shows.

\begin{corollary}
Under the conditions of Remark \ref{sinderivacion}, for all $f \in R=D[X;\sigma]$, $\deg f^* \leq \sqrt{r} \deg f$.
\end{corollary}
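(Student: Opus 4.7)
The plan is to reduce the statement to Theorem \ref{cotabound} via Lemma \ref{quitoXm}. By Remark \ref{sinderivacion}, for the Ore extension $R = D[X;\sigma]$ under consideration, the bound of a nonzero $f \in R$ has the shape $f^* = d X^m \widehat{f}$ with $d \in D$ nonzero, $m \geq 0$, and $\widehat{f} \in C$. The strategy is thus: peel off the factor $X^m$ using Lemma \ref{quitoXm}, apply Theorem \ref{cotabound} to what remains, and finally observe that the $m$ contribution to the degrees is compatible with the bound $\sqrt{r}\deg f$ because $\sqrt{r} \geq 1$.

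Concretely, first apply Lemma \ref{quitoXm} to write $f = g X^m$ with $g \in R$ satisfying $g^* = d \widehat{f}$. Since $d$ is a unit in $D$, we have $R g^* = R \widehat{f}$, so we may take $\widehat{f} \in C$ as a representative of the bound of $g$. Therefore $g$ satisfies the hypotheses of Theorem \ref{cotabound}, and that theorem yields
\[
\deg \widehat{f} \leq \sqrt{r} \deg g.
\]
(The case $m = 0$ needs no special treatment, since then $g = f$ and the statement follows at once.)

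To conclude, combine the degree identities $\deg f = \deg g + m$ and $\deg f^* = \deg \widehat{f} + m$ to obtain
\[
\deg f^* \;=\; m + \deg \widehat{f} \;\leq\; m + \sqrt{r}\deg g \;=\; m + \sqrt{r}(\deg f - m) \;=\; \sqrt{r}\deg f - (\sqrt{r}-1)m.
\]
Since $r \geq 1$ and $m \geq 0$, the correction term $(\sqrt{r}-1)m$ is nonnegative, hence $\deg f^* \leq \sqrt{r}\deg f$, as desired.

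The only subtle point — if one can call it an obstacle — is checking that after removing the $X^m$ factor the bound of $g$ really lies in $C$ in the sense required by Theorem \ref{cotabound}. This is immediate once one remembers that bounds are defined up to a unit in $D$ on the left, so the $d$ in $g^* = d\widehat{f}$ may be absorbed. Everything else is bookkeeping with degrees.
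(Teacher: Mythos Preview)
Your proof is correct and follows essentially the same route as the paper: apply Lemma \ref{quitoXm} to strip off $X^m$, invoke Theorem \ref{cotabound} on the remaining factor $g$, and conclude via the chain $\deg f^* = m + \deg g^* \leq m + \sqrt{r}\deg g = m + \sqrt{r}(\deg f - m) \leq \sqrt{r}\deg f$. Your explicit remark that $g^* = d\widehat{f}$ may be replaced by $\widehat{f} \in C$ (since bounds are only defined up to a unit in $D$) is a welcome clarification that the paper leaves implicit.
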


\begin{proof}
By the previous lemma, if $f^* = d \widehat{f} X^m$ then $f = g X^m$ with $g^* = d \widehat{f}$. By Theorem \ref{cotabound}, $\deg g^* \leq \sqrt{r} \deg g$, so
\begin{displaymath}
\deg f^* = m + \deg g^* \leq m + \sqrt{r} \deg g = m + \sqrt{r} (\deg f - m) \leq \sqrt{r} \deg f,
\end{displaymath}
as desired. 
\end{proof}

\begin{example}\label{ex:quaternions}
Let $D$ be the standard quaternion algebra over the rational field, i.e., $D=\mathbb{Q}\oplus \mathbb{Q}i \oplus \mathbb{Q} j \oplus \mathbb{Q} k$, where $i^2=-1$, $j^2=-1$ and $ij=-ji=k$. Consider the inner automorphism $\sigma:D\rightarrow D$ given by $\sigma(a)=ua  u^{-1}$, where $u=1+i$. Hence, $\sigma$ has order one with respect to an inner automorphism, and its invariant subfield is $D^\sigma=\mathbb{Q}\oplus \mathbb{Q}i$. As observed in Remark \ref{sinderivacion}, the center of the skew polynomial algebra $R=D[X;\sigma]$ is $C(R)=\mathbb{Q}[z]$, where $z=u^{-1}X$. A basis of $R$ over $C$ is given by $\{1,i,j,k\}$ so, in particular, by Theorem \ref{cotabound},   $\deg f^*\leq 2\cdot \deg f$  for any polynomial $f\in R$.

Let us consider the polynomial in $R$,
$$f=iX^2+(k+1)X+j+k.$$
We follow the steps of Algorithm \ref{alg:bound}. By Algorithm \ref{LEEAAnn}, $\Ann_R(i+Rf)=Rf_i$, where
$$f_i=-iX^2 + (-1 + k)X + j + k.$$
Now,  $\llcm{f}{f_i}=f^*$, where
$$f^*=-4X^4 + (4 + 4i)X^3 - 8iX^2 + (8 - 8i)X + 8.$$
This is a bound of $f$. Indeed, by making the change of variable $z= u^{-1}X$, $f^*$ can be written as follows,
$$f^*=\widehat{f}=16z^4 - 16z^3 + 16z^2 + 16z + 8$$
a  polynomial in  $\mathbb{Q}[z]$.
\end{example}

\begin{example}\label{ex:cyclotomic}
Let $D = \mathbb{Q}(\xi)$, where $\xi$ is a primitive 7th root of unit, and $\sigma: D \rightarrow D$ defined by $\sigma(\xi) = \xi^2$.  Let then $R=D[X;\sigma]$ and $f$ be the polynomial
$$f=\left(\xi^{5} - 1\right) X^{3} + \xi X + 3 \xi^{2} - 1.$$
Then, by applying Algorithm \ref{alg:bound2} to the set of generators $\{\xi,X\}$ of $R$ as an algebra over its center, a bound $f^*$ of $f$  is  
$$f^*=X^{9} + \left(-\xi^{4} - \xi^{2} - \xi - 2\right) X^{6} +
\left(-\frac{5}{7} \xi^{4} - \frac{5}{7} \xi^{2} - \frac{5}{7} \xi -
\frac{20}{7}\right) X^{3} + \frac{58}{7} \xi^{4} + \frac{58}{7}
\xi^{2} + \frac{58}{7} \xi - \frac{13}{7}. $$
Observe that we did not need to know the center of the ring. In this case, the order of $\sigma$ is $3$  and $D^\sigma = \mathbb{Q}(\xi^4+\xi^2+\xi)$, so the center of $R$ is $C(R) = D^\sigma[X^3]$. The coefficients of $f^*$  belong  to $D^\sigma$ and,  thus,   $f^* = \widehat{f} \in C(R)$.
\end{example}

\begin{example}\label{finitefields}
Let $\mathbb{F} = \mathbb{F}_{256} = \mathbb{F}_2(a)$, where the minimal polynomial of $a$ is $x^{8} + x^{4} + x^{3} + x^{2} + 1$. Let $\tau$ be the Frobenius automorphism and $\sigma = \tau^2$, i.e., $\sigma(\alpha) = \alpha^{2^2} = \alpha^4$ for all $\alpha \in \mathbb{F}$. Then the invariant subfield $\mathbb{F}^\sigma = \mathbb{F}_4 = \{0,1,b = a^{85}, b + 1 = a^{170}\}$. We set the skew polynomial ring $R = \mathbb{F}[x;\sigma]$, whose center is $C = \mathbb{F}_4[z]$, where $z = x^4$. For brevity, we shall write the elements of $\ffield[256]$, different of 0 and 1, as powers of the primitive element $a$. Let us compute the bound of the polynomial $f\in R$ given by
\begin{displaymath}
f  =  x^{100} + a x^{43} +a^{120} x^{20} + a^{35}x^4+ a^{205}.
\end{displaymath}
By applying Algorithm \ref{alg:bound} or \ref{alg:bound2}, we get
\begin{displaymath}
\begin{split}
f ^* =  & x^{400} + a^{85} x^{320} + x^{304} + a^{170}x^{300}+ x^{240}+a^{85}x^{208}+a^{170}x^{200}+
a^{85}x^{172}+a^{85}x^{160}+\\ & \quad +x^{144}+a^{170}x^{140}+a^{170}x^{128}+x^{120}+a^{85}x^{112}+x^{108}+a^{170}x^{104}+
a^{85}x^{100}+\\ & \quad +x^{80}+a^{170}x^{16}+a^{85},
\end{split}
\end{displaymath}
which, viewed as an element of the center $R$, is given by
\begin{displaymath}
\begin{split}
\widehat{f} =  & z^{100} + b z^{80} + z^{76} + (b+1)z^{75}+ z^{60}+bz^{52}+(b+1)z^{50}+
bz^{43}+bz^{40}+\\ & \quad +z^{36}+(b+1)z^{35}+(b+1)z^{32}+z^{30}+bz^{28}+z^{27}+(b+1)z^{26}+
bz^{25}+\\ & \quad +z^{20}+(b+1)z^{4}+b.
\end{split}
\end{displaymath}

\end{example}

\begin{example}\label{boundF(t)}
Let $\ffield[16] = \ffield[2](a)$ where $a^4 = a + 1$,  and  $D=\ffield[16](t)$, the field of rational functions over $\ffield[16]$.  Consider  the automorphism $\sigma:D \rightarrow D$   defined by $\sigma(t) = a^5t$.  The  order of $\sigma$ is $3$ and the invariant subfield is $D^\sigma = \ffield[16](t^3)$. Let $R = D[X;\sigma]$, with center $C(R) = D^\sigma[X^3]$. Let $f$ be the polynomial
$$f=X^{2} + \left(\frac{1}{t + a}\right) X + a t^{2} + 1$$
Then we can apply Algorithm \ref{alg:bound2} to compute $f^*$, and we obtain that
$$f^*=X^{6} + \left(\frac{(a^{3} + a) t^{3} + a^{2} + a + 1}{a^{2}
t^{3} + a^{2} + a}\right) X^{3} + a^{3} t^{6} + 1.$$
\end{example}

\begin{example}\label{withdelta}
Let us show a small example of a differential polynomial ring. Consider the field of rational functions over $\mathbb{F}_2$ and the derivation $\delta$ given by the usual derivative, i.e. $\delta(f(t))=f'(t)$. Hence, let $R=\mathbb{F}_2(t)[X;\delta]$. Following \cite{Lam/Leroy:1988, Jacobson:1996, Giesbrecht/Zhang:2003}, the center of $R$ is $C(R)=\mathbb{F}_2(t^2)[X^2]$. Let $f=X+t\in R$ and follow the steps of Algorithm \ref{alg:bound}. We firstly need to compute a generator of $\Ann_R(t+Rf)$. By using Algorithm \ref{LEEAAnn}, this annihilator is generated by $\frac{1}{t}X+\frac{t^2+1}{t^2}$, or multiplying by $t^2$, by $f_t=tX+t^2+1$. Then $\llcm{f_t}{f}=tX^2+(t^3+t)=t(X^2+t^2+1)$. So that $f^*=X^2+(t^2+1)\in C(R)$ is a bound of $f$.
\end{example}

\section{Complexity}\label{complexity}

The time complexity in the calculation of a bound depends heavily on the automorphism and derivation defining the Ore polynomial ring $R=D[X;\sigma,\delta]$, as well as the skew field $D$. The analysis of the general case, i.e. \(\sigma\) is not the identity and \(\delta\) is not zero, can be dropped for two reasons. First,  the number of sums, multiplications and applications of $\sigma$ and $\delta$ needed to compute $X^i a$ belongs to $\bigO(2^i)$, so this bound is carried in the remaining algorithms. Second, we observed in Section \ref{bounds} that, in most situations, the general case can be reduced to the cases \(\delta = 0\) or \(\sigma\) being the identity, see \cite[Theorem 3.1, pp. 295]{Cohn:1971}.  

In this section, we analyze the complexity when $R= D[X;\sigma]$, where $D$ is a skew field which is finite-dimensional over its center $C(D)$, and $\sigma$ is an automorphism of $D$ of finite order. The pure derivation case can be done analogously by suitable conditions and by adjusting the complexity of the basic operations, but the cost is greater. In fact, as we will see in Lemma \ref{numops}, the product in the pure automorphism case is quadratic in the degree, but the product in the pure derivation case coincides with the cost of matrix multiplication, see \cite{VanDerHoeven:2002}. So this increase of the cost of the multiplication is carried out. As usual we use $\operatorname{MM}(m)$ to denote the number of basic operations in a field $L$ needed to multiply matrices of size $m \times m$ over $L$. The usual scholar method says that \(\operatorname{MM}(m) \in \bigO(m^3)\), and the most recent paper \cite{LeGall:2014} reduces it to \(\bigO(m^{2.373})\).

Let $D^{\sigma}$ be the invariant  skew subfield under $\sigma$. As seen in Remark \ref{sinderivacion}, $D$ has finite dimension $\mu$ over $K = C(D) \cap D^\sigma$ and $R$ is free of finite rank $r$ over $C$.

The cost of the arithmetic in $D$ is described  in terms of $K$, thus it is quite natural to assume that this cost depends on $\mu$. The second column of Table \ref{tab:genericarithmeticcost} includes labels for upper bounds of the number of basic operations in $K$ to carry out the arithmetic on $D$.

\begin{table}[htbp]
  \centering
  \begin{tabular}{|c|c|c|c|}
    \hline
    & Generic & $K \subseteq \ffield[256]$ & $\ffield[q] \subseteq \ffield[q^\mu]$ \\
    \hline
    \hline
    $a+b$ & $\operatorname{S}(\mu)$ & $1$ & $\mu$ \\ 
    \hline
    $ab$ & $\operatorname{M}(\mu)$ & $1$ & $\mu \log \mu \log \log \mu$ \\ 
    \hline
    $a^{-1}$ & $\operatorname{I}(\mu)$ & $1$ & $\operatorname{M}(\mu) \log \mu$ \\ 
    \hline
    $\sigma^{i}(a)$ & $\operatorname{A}(\mu)$ & $1$ & $\mu \operatorname{M}(\mu) \log \mu$ \\ 
    \hline
  \end{tabular}
  \caption{Cost of the arithmetic of $D$ in terms of the arithmetic of $K$, including Examples \ref{expl:F256} and \ref{expl:finitefield}.}
  \label{tab:genericarithmeticcost}
\end{table}

Why do we provide a common upper bound for all powers of $\sigma$? Since the order of $\sigma$ is finite, we may adopt $\operatorname{A}(\mu)$ the maximum of the upper bounds of the number of basic operations needed to perform each power of $\sigma$. Of course, it is natural to think that the computation of $\sigma^i$ needs $i$ times more the computation of $\sigma$. However, in several examples we are going to deal with, this is not the right way to calculate $\sigma^i$. Moreover, this assumption also helps the computation of the complexity of the extended Euclidean algorithms to compute right greatest common divisors and left least common multiples.

\begin{example}\label{expl:F256}
Let $D = \ffield[256]$ and $K$ be any subfield invariant by $\sigma$. In this case $\ffield[256]$ is small enough to be tabulated. Hence we can save four tables including additions, multiplications, inverses, and the powers of $\sigma$ (up to eight). All basic operations are an access to a table, and we can assume that all of them have the same cost, normalized to $1$.
\end{example}

\begin{example}\label{expl:finitefield}
Let $K = \ffield[q]$ and $D = \ffield[q^\mu]$. The idea behind this example is that $K$ is small enough to be tabulated but $D$ is quite big and its arithmetic has to be algorithmic with respect to $K$. The complexity is inherited from the integer multiprecision arithmetic. Hence it is well known that $\operatorname{S}(\mu) = \mu$, $\operatorname{M}(\mu) = \mu \log \mu \log \log \mu$ with the well known algorithms of Sch\"{o}nhage \& Strassen \cite{Schonhage/Strassen:1971} and Sch\"{o}nhage \cite{Schonhage:1977}, or Cantor \& Kaltofen \cite{Cantor/Kaltofen:1991}, $\operatorname{I}(\mu) = \operatorname{M}(\mu) \log \mu$, and $\operatorname{A}(\mu) = \mu \operatorname{M}(\mu) \log(\mu)$ using an algorithm of von zur Gathen \& Shoup \cite{vonzurGathen/Shoup:1992}. Observe that $\sigma$ is a power of the Frobenius automorphism $\tau(a) = a^p$ where $p$ is the prime factor of $q$. Hence $\sigma^i(a)$ consists in the computation of a suitable (and bounded) power of $a$.
\end{example}

\begin{lemma}\label{numops}
Let $f,g \in D[X;\sigma]$ be nonzero polynomials of degree $n$ and $m$, respectively. Then each of the following operations can be performed with the corresponding number of basic operations:
\begin{itemize}
\item $fg$ with $B_{\textsc{mult}}(n,m) = nm\operatorname{S}(\mu)+(n+1)(m+1)\operatorname{M}(\mu)+n(m+1)\operatorname{A}(\mu)$.
\item $\lquotrem{f}{g}$ with $B_{\textsc{div}}(n,m) = \operatorname{I}(\mu) + \tfrac{1}{2}(n-m+1)(n+m+4)\operatorname{S}(\mu) + (n-m+1)(m+2) \operatorname{M}(\mu) + (n-m)(m+2) \operatorname{A}(\mu)$.
\item $\rgcd{f}{g}$ with $B_{\textsc{rgcd}}(n,m) = (m-d+1) \operatorname{I}(\mu) + \Big( \tfrac{1}{2} (n-m+1)(n+m+4) + (m-d)(m+d+4) \Big) \operatorname{S}(\mu) + \Big( (n-m+1)(m+2) + (m-d)(m+d+3) \Big) \operatorname{M}(\mu) + \Big( (n-m)(m+2) + \tfrac{1}{2} (m-d)(m+d+3) \Big) \operatorname{A}(\mu)$.
\end{itemize}
\end{lemma}

\begin{proof}
The proof is completely analogous to the commutative case, which can be viewed in \cite{vonzurGathen/Gerhard:2003}.
\end{proof}

\begin{proposition}\label{numops:LEEAAnn}
Let $f,h \in D[X;\sigma]$ nonzero such that $\deg f = n > m = \deg h$ and $\deg \rgcd{f}{h} = d$. An upper bound of the number of basic operations needed to apply Algorithm \ref{LEEAAnn} is 
\begin{displaymath}
\begin{split}
B_{\textsc{ann}}(n,m,d) &= (m-d+1) \operatorname{I}(\mu) + \tfrac{1}{2} (n+m-2d+1)(n+m+4) \operatorname{S}(\mu) \\
&\quad + \Big( (n-m+1)(m+2) + (m-d)(2n+4) \Big) \operatorname{M}(\mu) \\
&\quad + \Big( (n-m)(m+2) + (m-d)(n+2) \Big) \operatorname{A}(\mu).
\end{split}
\end{displaymath}
\end{proposition}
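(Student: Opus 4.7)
The plan is to mirror the argument used in Proposition \ref{numops:RGCD}, this time accounting for the additional cost $B_v(p,q)$ of the update $v \gets v_0 - c v_1$ performed in each pass. Algorithm \ref{LEEAAnn} begins with a single left Euclidean division of $f$ by $h$ together with the (cost-free) assignment $v \gets -c$, contributing $B_\textsc{div}(n,m)$ basic operations. Every subsequent iteration of the \textbf{while} loop performs one division and one $v$-update, with combined cost
\[
C(p,q) \;=\; B_\textsc{div}(p,q) + B_v(p,q),
\]
where $p = \deg f_0$ and $q = \deg f_1$ at the start of that iteration.

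The key ingredient is inequality \eqref{ineq4loopAnn}, obtained by combining \eqref{ineq4div} and \eqref{ineq4v}, which yields $C(p,k) + C(k,q) > C(p,q)$ for $p > k > q$. This superadditivity forces the worst case to occur precisely when $\deg f_1$ drops by exactly one at every step, so that the loop runs $m-d$ times with $(\deg f_0,\deg f_1) = (m-i+1,\, m-i)$ at the start of iteration $i$. Under this hypothesis,
\[
B_\textsc{ann}(n,m,d) \;=\; B_\textsc{div}(n,m) \;+\; \sum_{i=1}^{m-d} C(m-i+1,\, m-i).
\]

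It then remains to evaluate the right-hand side. The inversion count is immediate: one per division, for a total of $(m-d+1)\,I(\mu)$. For the other three operation types I substitute $p = m-i+1$, $q = m-i$ into Lemma \ref{numops:ldiv} and into the explicit formula for $B_v(p,q)$ given just before \eqref{loopcostAnn}; each per-iteration contribution then becomes an affine function of $i$, so summing over $i = 1,\ldots,m-d$ is a standard arithmetic progression computation. Adding the initial term $B_\textsc{div}(n,m)$ and collecting coefficients yields the stated closed form. I expect the main obstacle to be purely the arithmetic bookkeeping, in particular recasting the combined $S(\mu)$-contribution into the compact factorized shape $\tfrac{1}{2}(n+m-2d+1)(n+m+4)$ and cleanly separating the initial and loop contributions inside the $M(\mu)$ and $A(\mu)$ coefficients.
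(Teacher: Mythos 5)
Your proposal follows exactly the paper's argument: the same decomposition of the total cost as $B_\textsc{div}(n,m) + \sum_{i=1}^{m-d} C(m-i+1,m-i)$ with $C(p,q)=B_\textsc{div}(p,q)+B_v(p,q)$, the same appeal to the superadditivity inequality \eqref{ineq4loopAnn} to identify the worst case as a degree drop of one per iteration, and the same final summation. The only portion you leave implicit is the arithmetic-progression bookkeeping, which the paper carries out explicitly; otherwise this is the paper's proof.
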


\begin{proof}
Algorithm \ref{LEEAAnn} is just the left extended euclidean algorithm but only one Bezout coefficient is computed, hence this result follows as the commutative case, which can be seen in \cite{vonzurGathen/Gerhard:2003}.
\end{proof}

\begin{theorem}\label{Effboundi}
Let $f \in D[X;\sigma]$ nonzero. The number of basic operations needed to run Algorithm \ref{alg:bound} belongs to $\bigO\big(rn\operatorname{I}(\mu) + r^3n^2 \operatorname{S}(\mu) + r^2n^2(\operatorname{M}(\mu)+\operatorname{A}(\mu))\big)$, where $n = \deg f$. 
\end{theorem}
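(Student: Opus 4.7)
The plan is to analyze Algorithm \ref{alg:bound} iteration by iteration: the main \textbf{while} loop runs exactly $r$ times, and in iteration $i$ there are only two non-trivial subroutine calls. Namely, (a) the annihilator $f_{c_i}$ satisfying $\Ann_R(c_i + Rf) = Rf_{c_i}$ is produced by one call to Algorithm \ref{LEEAAnn} on $(f,c_i)$; and (b) the update $f^* \gets \lcm{f_{c_i}}{f^*}$ is obtained from one call to Algorithm \ref{LEEA} (retrieving $u_1, v_1$ from its extended outputs) together with a single polynomial product whose cost is controlled by Lemma \ref{numops:fg}.

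Before invoking the cost estimates already proved, I would establish the relevant degree bounds. By hypothesis $\deg f = n$, and the module generators $c_i$ may be chosen so that $\deg c_i$ is bounded in terms of $r$ alone (in the setting of Remark \ref{sinderivacion} the natural basis has degrees strictly less than $\sqrt{r}$). From Lemma \ref{annihilators}, together with $\lcm{f}{c_i} = f_{c_i} c_i$, one gets $\deg f_{c_i} \leq n$. The decisive inequality is $\deg f^* \leq \sqrt{r}\, n$ from Theorem \ref{cotabound}, which holds throughout the loop since every intermediate $f^*$ left-divides the final bound and so inherits the same degree bound.

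Next I would apply Proposition \ref{numops:LEEAAnn}. For step (a), with larger input of degree $n$ and smaller input of degree at most $\sqrt{r}$, the cost per iteration is at most $O(\sqrt{r})\, I(\mu) + O(n^2)\, S(\mu) + O(n\sqrt{r})\,(M(\mu)+A(\mu))$. For step (b), with larger input $f^*$ of degree at most $\sqrt{r}\, n$ and smaller input $f_{c_i}$ of degree at most $n$, the cost per iteration is at most $O(n)\, I(\mu) + O(r n^2)\, S(\mu) + O(\sqrt{r}\, n^2)\,(M(\mu)+A(\mu))$; here the $O(n)\, I(\mu)$ contribution arises from the $(m - d + 1)$ coefficient in the proposition, evaluated at $m = \deg f_{c_i} \leq n$. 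Summing the two contributions over the $r$ iterations gives an overall count of order $rn\, I(\mu) + r^2 n^2 S(\mu) + r^{3/2} n^2 (M(\mu) + A(\mu))$, which is absorbed into the looser expression $rn\, I(\mu) + r^3 n^2 S(\mu) + r^2 n^2 (M(\mu)+A(\mu))$ claimed in the statement.

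The main obstacle is the careful bookkeeping of degrees, making sure that the cost of the LCM update in step (b) does not grow with the iteration index beyond what the uniform inequality $\deg f^* \leq \sqrt{r}\, n$ permits; this is what prevents a telescoping blow-up across the $r$ iterations. The secondary technical point is to check that retrieving $u_1, v_1$ during the extended Euclidean run and performing the final polynomial product to recover $\lcm{f_{c_i}}{f^*}$ remain within the asymptotic budget supplied by Proposition \ref{numops:LEEAAnn} together with Lemma \ref{numops:fg}. Once these verifications are in place, the conclusion is just the announced summation of the per-iteration bounds.
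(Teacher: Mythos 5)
Your proposal is correct and proves even a slightly stronger bound than the statement, but the way you control the degrees differs from the paper's own accounting, and that difference is worth recording. The skeleton is the same: both arguments split each of the $r$ passes through the loop into one call to Algorithm \ref{LEEAAnn} on $(f,c_i)$ and one lcm update costed as an annihilator computation plus a product, and both then sum the bounds of Proposition \ref{numops:LEEAAnn} and Lemma \ref{numops:fg} over the $r$ iterations, treating $\deg c_i$ as bounded independently of $n$. Where you diverge is the degree of the accumulating polynomial $f^*$: the paper tracks it exactly as $in-\sum_{j<i}(d_j+e_j)$ and simply relaxes this to $in$, which is what produces the coefficients $r^3n^2S(\mu)$ and $r^2n^2(M(\mu)+A(\mu))$ after summation; you instead observe that each intermediate left lcm $[f,f_{c_1},\dots,f_{c_i}]_\ell$ generates a left ideal containing $Rf^*$, hence is a \emph{right} divisor of the final bound (your phrase ``left-divides'' has the sides reversed, though the degree conclusion is unaffected), and therefore has degree at most $\sqrt{r}\,n$. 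This uniform cap yields the sharper estimate $\bigO\big(rnI(\mu)+r^2n^2S(\mu)+r^{3/2}n^2(M(\mu)+A(\mu))\big)$, which of course implies the stated one; it is in fact the same device the paper reserves for Theorem \ref{Effboundii}. Two small points of care: in the generality of Section \ref{complexity} the bound of $f$ need not lie in $C$ verbatim (it may carry a factor $dX^m$), so the inequality $\deg f^*\leq\sqrt{r}\,n$ should be quoted from the Corollary following Lemma \ref{quitoXm} rather than from Theorem \ref{cotabound} directly; and the lcm step should be costed via Algorithm \ref{LEEAAnn} (the $v$-sequence only), as you in fact do when you invoke Proposition \ref{numops:LEEAAnn}, rather than via the full Algorithm \ref{LEEA} you mention, though this changes nothing asymptotically. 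With these cosmetic corrections your argument is complete.
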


\begin{proof}
For each iteration $1 \leq i \leq r$, let $m_i = \deg(c_i)$, $d_i = \deg \rgcd{f}{c_i}$ and $e_i = \deg \rgcd{f^*}{f_{c_i}}$. Since $\deg(f_{c_i}) = n-d_i$ and $\deg(f^*) = in - \sum_{1 \leq j < i} (d_j + e_j)$, it follows that the number of basic operations needed to run Algorithm \ref{alg:bound} is bounded by 
\begin{multline}\label{eq:boundAlgb1}
\sum_{i=1}^r B_{\textsc{ann}}(n,m_i,d_i) + B_{\textsc{ann}} (in - \textstyle\sum_{j<i}(d_j + e_j), n-d_i, e_i) \\
+ B_{\textsc{mult}}(in - \textstyle\sum_{j<i}(d_j + e_j) - e_i, n-d_i) \leq \\
\leq \sum_{i=1}^r B_{\textsc{ann}}(n,m_i,0) + B_{\textsc{ann}} (in, n-d_i, 0) + B_{\textsc{mult}}(in, n)
\end{multline}
We assume $m_1, \dots, m_r$ are constant since the generators $c_1, \dots, c_r$ are fixed for each $D$ and $\sigma$. Moreover $d_i \leq m_i$ for any $i=1,\ldots , r$, so we also assume that $d_1,\ldots, d_r$ are constant.
From Lemma \ref{numops} and Proposition \ref{numops:LEEAAnn} we may obtain how many inversions, sums, multiplications and powers of $\sigma$ in $D$ appear in \eqref{eq:boundAlgb1}. Hence the number of inversions is given by 
\begin{displaymath}
\sum_{i=1}^r (m_i + 1) + (n - d_i + 1) \in \bigO(rn).
\end{displaymath}
The number of sums in given by 
\begin{multline*}
\sum_{i=1}^r \Big( \tfrac{1}{2} (n + m_i + 1) (n + m_i + 4) + \tfrac{1}{2} (in + n - d_i + 1) (in + n - d_i + 4) + in^2 \Big) \\
\in \bigO\big((\tfrac{1}{3}r^3 + 2 r^2 + \tfrac{11}{3}r) n^2 \big).
\end{multline*}
The number of multiplications is
\begin{multline*}
\sum_{i=1}^r \Big( (n - m_i + 1)(m_i + 2) + 2m_i(n + 2) + \\ + (in - n + d_i + 1)(n - d_i + 2) + 2(n - d_i) (in + 2) + (in + 1)(n + 1) \Big) \\
\in \bigO\big((\tfrac{3}{2}r^2 + \tfrac{1}{2}r) n^2\big).
\end{multline*}
Finally, the number of powers of the automorphism is
\begin{multline*}
\sum_{i=1}^r \Big( (n - m_i + 1)(m_i + 2) + m_i (n + 2) + (in - n + d_i + 1)(n - d_i + 2) + (n - d_i) (in + 2) + in(n + 1) \Big) \\
\in \bigO(r^2 n^2).
\end{multline*}
\end{proof}

\begin{theorem}\label{Effboundii}
Let $f \in D[X;\sigma]$. The number of basic operations needed to run Algorithm \ref{alg:bound2} belongs to $\bigO\big( \sqrt{r}sn \operatorname{I}(\mu) + r\sqrt{r}n^3 \operatorname{S}(\mu) + srn^2 (\operatorname{M}(\mu) + \operatorname{A}(\mu)) \big)$, where $n = \deg f$. 
\end{theorem}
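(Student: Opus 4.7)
The plan is to bound the runtime by (i) bounding the number of iterations of the \textbf{while} loop using Theorem \ref{cotabound}, and (ii) evaluating the per-iteration cost via Proposition \ref{numops:LEEAAnn} and Lemmas \ref{numops:fg}, \ref{numops:ldiv}. The crucial observation will be that, although $f^*$ can grow to degree $\sqrt{r}n$, each \textbf{else}-update only increases $\deg f^*$ by at most $\deg d_i = \bigO(1)$, so the degree gaps appearing in the LCM computation stay bounded.

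First I would note that each execution of the \textbf{else} branch strictly increases $\deg f^*$, since $R\lcm{f^*_{d_i}}{f^*} \subsetneq Rf^*$ by Theorem \ref{basecota}(b); meanwhile the corollary of Theorem \ref{cotabound} established above keeps $\deg f^* \leq \sqrt{r}n$ throughout. Hence the \textbf{else} branch runs at most $\sqrt{r}n$ times; since between two consecutive \textbf{else}-runs the variable $i$ can advance at most $s$ times, the total number of \textbf{if}-tests is in $\bigO(s\sqrt{r}n)$. Write $N := \sqrt{r}n$, so that every intermediate $f^*$ has degree $\leq N$.

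Next I would establish the degree bound for the LCM. By Lemma \ref{annihilators}, $f^*_{d_i} d_i = \lcm{f^*}{d_i}$ is a common left multiple of both $f^*_{d_i}$ and $f^*$, so $\lcm{f^*_{d_i}}{f^*}$ right-divides $f^*_{d_i} d_i$, whence $\deg \lcm{f^*_{d_i}}{f^*} \leq \deg f^* + \deg d_i$. Combining with the identity $\deg \lcm{f^*_{d_i}}{f^*} = \deg f^*_{d_i} + \deg f^* - \deg \rgd{f^*_{d_i}}{f^*}$ and $\deg f^* - \deg f^*_{d_i} = \deg \rgd{f^*}{d_i} \leq \deg d_i$, this shows that when Algorithm \ref{LEEAAnn} is invoked on $(f^*, f^*_{d_i})$ to produce $\lcm{f^*_{d_i}}{f^*}$ via Lemma \ref{annihilators}, both quantities $n-m$ and $m-d$ appearing in Proposition \ref{numops:LEEAAnn} are $\bigO(1)$. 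This gives an LCM cost of $\bigO(I(\mu) + N(S(\mu)+M(\mu)+A(\mu)))$ per \textbf{else}-branch.

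Finally I would put everything together. Each \textbf{if}-test costs $\bigO(I(\mu) + N(S(\mu)+M(\mu)+A(\mu)))$ by Lemmas \ref{numops:fg} and \ref{numops:ldiv} (with degree gap $\bigO(1)$); summed over $\bigO(s\sqrt{r}n)$ tests this yields $\bigO(s\sqrt{r}n I(\mu) + srn^2(S(\mu)+M(\mu)+A(\mu)))$. Each annihilator $f^*_{d_i}$ computed by Algorithm \ref{LEEAAnn} costs, by Proposition \ref{numops:LEEAAnn} with $n \leq N$ and $m = \bigO(1)$, the amount $\bigO(I(\mu) + N^2 S(\mu) + N M(\mu) + N A(\mu))$, which dominates the LCM cost; summed over $\bigO(\sqrt{r}n)$ \textbf{else}-branches this contributes $\bigO(\sqrt{r}n I(\mu) + r\sqrt{r}n^3 S(\mu) + rn^2 (M(\mu)+A(\mu)))$. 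Adding the two contributions produces the claimed bound. The main technical obstacle is the LCM degree observation in the third paragraph; without it, the $M(\mu)$ and $A(\mu)$ contributions from the \textbf{else}-branches would inflate to $\bigO(r\sqrt{r}n^3)$ and overwhelm the stated bound.
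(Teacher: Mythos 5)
Your iteration count (at most $(\sqrt r-1)n$ \textbf{else}-branches by Theorem \ref{basecota}(b) plus the cap $\deg f^*\le\sqrt r\,n$, hence $\bigO(s\sqrt r n)$ \textbf{if}-tests) and your accounting of the annihilator computations match the paper, but the key claim in your third paragraph is false, and the proof as written collapses there. You assert that $f^*_{d_i}d_i=\lcm{f^*}{d_i}$ is a common left multiple of $f^*_{d_i}$ and $f^*$, hence that $\lcm{f^*_{d_i}}{f^*}$ right-divides $f^*_{d_i}d_i$ and so $\deg\lcm{f^*_{d_i}}{f^*}\le\deg f^*+\deg d_i$. But $f^*_{d_i}d_i$ is a \emph{right} multiple of $f^*_{d_i}$, not a left one: there is no reason for $f^*_{d_i}d_i$ to lie in $Rf^*_{d_i}$, hence none for it to lie in $Rf^*_{d_i}\cap Rf^*$. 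Concretely, take $\ffield[8]=\ffield[2](a)$ with $a^3=a+1$, $\sigma$ the Frobenius (order $3$, so the algebra generators over the center can be taken to be $a$ and $X$), and $g=X^2+aX+1$ with $d_i=a$. Then $\lrem{ga}{g}\neq 0$, the annihilator generator is (up to a left unit) $g_{d_i}=X^2+(a+1)X+(a+1)$, and $\rgd{g_{d_i}}{g}$ has degree $0$ because $g-g_{d_i}=X+a$ does not right-divide $g$; hence $\deg\lcm{g_{d_i}}{g}=4$, a jump of $2$ in a single \textbf{else}-step although $\deg d_i=0$. So the per-step degree gap $p-q$ is not $\bigO(1)$, and the quantities $n-m$ and $m-d$ you feed into Proposition \ref{numops:LEEAAnn} for the lcm step are not bounded.

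The theorem is still reachable along your outline, but the variable jump must be handled differently, which is exactly what the paper does: it writes the cost of one pass as $E(p,q)=(p-q)\Phi(q)+\Gamma(q)$ with $\Phi$ linear increasing and $\Gamma$ quadratic with positive coefficients, proves the superadditivity $E(p,k)+E(k,q)>E(p,q)$ for $p>k>q$, concludes that the worst case is $p=q+1$ in every iteration, and sums $E(q+1,q)$ for $q=n,\dots,\sqrt r n-1$. (Equivalently, one can telescope: the jumps satisfy $\sum_j(p_j-q_j)\le(\sqrt r-1)n$, so the jump-proportional part of the lcm cost contributes at most $\bigO\big(\sqrt r n\,I(\mu)+rn^2(S(\mu)+M(\mu)+A(\mu))\big)$ in total.) This also shows that your closing remark is mistaken: even without a per-step $\bigO(1)$ bound on the growth of $\deg f^*$, the $M(\mu)$ and $A(\mu)$ contributions do not inflate to $\bigO(r\sqrt r n^3)$, because large jumps entail correspondingly fewer iterations; what the argument genuinely needs is the superadditivity (or telescoping) step, not the degree claim you tried to prove.
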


\begin{proof}
 Since $\delta = 0$ , we can assume $d_1, \dots, d_{s-1} \in D$ and $d_s = X$ as a direct consequence of \cite[Theorem 1.1.22]{Jacobson:1996} or \cite[Theorem 2.8]{Lam/Leroy:1988}. In each iteration we assume $\deg(f^*) = q$ and $\deg\llcm{f^*}{f^*_{d_i}} = p$. Concerning the \textbf{if} condition, the worst situation is the one in which all $\lrem{f^*d_i}{f^*} = 0$ except the last one. The costs of these remainders, including the computation of $f^* d_i$, are
\begin{equation}\label{boundcostif}
(s-1)\big(B_{\textsc{mult}}(q,0) + B_{\textsc{div}}(q,q) \big) + B_{\textsc{div}}(q+1,q). 
\end{equation}
By Lemma \ref{quitoXm}, we do not loose generality if we assume $\rgcd{f^*}{X}= 1$, then it follows that $\deg(f^*_{d_i}) = q$ and its computation costs
\begin{equation}\label{boundcostann}
B_{\textsc{ann}}(q,\deg d_i,0) \leq B_{\textsc{ann}}(q,1,0),
\end{equation}
since $\deg d_i \leq 1$. Since $\deg\llcm{f^*}{f^*_{d_i}} = p$ and $\deg(f^*_{d_i}) = q$, we have that $\deg\rgcd{f^*}{f^*_{d_i}} = 2q-p$. In order to compute $\llcm{f^*}{f^*_{d_i}}$, we have to replace $f^*_{d_i}$ by $\lrem{f^*_{d_i}}{f^*}$, with cost
\begin{equation}\label{boundcostdiv}
B_{\textsc{div}}(q,q),
\end{equation}
moreover $\rgcd{f^*}{f^*_{d_i}} = \rgcd{f^*}{\lrem{f^*_{d_i}}{f^*}}$ implies that the cost needed to compute $\llcm{f^*}{f^*_{d_i}}$ is
\begin{equation}\label{boundcostllcm}
B_{\textsc{ann}}(q,q-1,2q-p) + B_{\textsc{mult}}(p-q-1,q-1),
\end{equation}
since the worst case is $\deg(\lrem{f^*_{d_i}}{f^*}) = q-1$. We sum \eqref{boundcostif}, \eqref{boundcostann}, \eqref{boundcostdiv} and \eqref{boundcostllcm} to get that the cost of each iteration is bounded by  
\begin{multline}\label{boundcostiter}
(s-1)B_{\textsc{mult}}(q,0) + sB_{\textsc{div}}(q,q) + B_{\textsc{div}}(q+1,q) + B_{\textsc{ann}}(q,1,0) \\
+ B_{\textsc{ann}}(q,q-1,2q-p) + B_{\textsc{mult}}(p-q-1,q-1).
\end{multline}
After we evaluate each term in \eqref{boundcostiter}, we conclude that the number of basic operations in $D$ needed to run an iteration is bounded by 
\begin{multline}
E(p,q) = (p-q + s + 4) \operatorname{I}(\mu) + \big((p-q)(3q+2) + (\tfrac{1}{2}q^2 + \tfrac{2s+13}{2} q + (2s+9)) \big) \operatorname{S}(\mu) + \\
+ \big( (p-q)(3q-4) + ((2s+8)q + (3s+5)) \big) \operatorname{M}(\mu) + \big((p-q)(2q+2) + (s+5)q\big) \operatorname{A}(\mu).
\end{multline}
As bigger $p-q$, bigger number of basic operations, but less number of iterations. Assume $p>k>q$. Observe that $E(p,q) = (p-q) \Phi(q) + \Gamma(q)$ where $\Phi(q)$ is a linear increasing  polynomial and $\Gamma(q)$ is quadratic with all its coefficients positive. Then
\begin{displaymath}
E(p,k) + E(k,q) - E(p,q) = (p-k) (\Phi(k) - \Phi(q)) + \Gamma(k) > 0. 
\end{displaymath}
Hence the biggest number of basic operations is obtained when $p = q+1$ in each iteration. Therefore, the number of basic operations needed to run Algorithm \ref{alg:bound2} is bounded by 
\begin{multline*}
\sum_{q=n}^{\sqrt{r}n - 1} E(q+1,q) = \sum_{q=n}^{\sqrt{r}n - 1} \Big( (s+5) \operatorname{I}(\mu) + \big(\tfrac{1}{2}q^2 + \tfrac{2s+19}{2} q + (2s+11)\big) \operatorname{S}(\mu) \\
+ \big( (2s+11)q + (3s+1) \big) \operatorname{M}(\mu) + \big( (s+7)q + 2 \big) \operatorname{A}(\mu) \Big),
\end{multline*}
which belongs to $\bigO\big( \sqrt{r} s n \operatorname{I}(\mu) + \frac{r \sqrt{r} -1}{6} n^3 \operatorname{S}(\mu) + (2s+11)\frac{r-1}{2} n^2 \operatorname{M}(\mu) + (s+7)\frac{r-1}{2} n^2 \operatorname{A}(\mu) \big)$, where $r$ is the rank of $D[X;\sigma]$ over its center and $s$ is the number of generators of $D[X;\sigma]$ as an algebra over its center.
\end{proof}

\begin{remark}
Although there are not previous algorithms for calculating the bound in this generality, we should mention two cases in which some central polynomials related with the bound are computed. Let \(D = \ffield[q]\), for each \(f \in \ffield[q][X;\sigma]\), a minimal central multiple of \(f\), and hence of \(f^*\), is computed in \cite[Lemma 4.2]{Giesbrecht:1998} with cost \(\bigO^\sim\big(n^2 \mu^3 + n^3 \mu^2 + \operatorname{MM}(n\mu)\big)\). In \cite[pp. 432]{Caruso/LeBorgne:2017} it is computed the reduced norm \(\mathcal{N}(f)\), other multiple of a bound, with cost \(\bigO(n \mu^3)\). Observe that, in this particular case, Theorems \ref{Effboundi} and \ref{Effboundii} say that Algorithms \ref{alg:bound} and \ref{alg:bound2} belong to $\bigO\big(n^2 \mu^7\big)$ and \(\bigO\big( n^3 \mu^4 + n^2 \mu^4 \log^2 \mu \log \log \mu \big)\), respectively, since \(r = \mu^2\) and \(s = 2\). So Algorithm \ref{alg:bound} is faster than the one in \cite{Giesbrecht:1998} in degree (the linear system needed there to compute the minimal central polynomial in Example \ref{finitefields} is \(400 \times 400\)), although it is slower that the norm computation in \cite{Caruso/LeBorgne:2017}. The latter is reasonable since, due to its minimal degree, the bound should take more operations to be computed. The reader might ask about a procedure to obtain the bound once the norm is known. However, this requires, at least, a factorization of \(\mathcal{N}(f)\) as a polynomial in the center, and later, for some polynomials, an exhaustive examination of the divisors. In any case the computation of the norm works only in the finite field case, and the properties used for that are too close to finite field to allow a generalization to other division rings. 
\end{remark}

\section{A criterion of irreducibility}\label{irred}

In this section we provide an algorithmic criterion to decide whether a given bounded polynomial $f \in R = D[X; \sigma, \delta]$ is irreducible. This procedure works in the class of Ore extensions satisfying the conditions \ref{condiciones} below. However, before detailing the algorithm, we consider a shortcut that allows us to accelerate the execution time. The following proposition, that generalizes \cite[Theorem 4.3]{Giesbrecht:1998}, is an important consequence of the proof of Theorem \ref{cotabound}. 

\begin{proposition}\label{boundmaxdegree}
 Let $R = D[X; \sigma, \delta]$ be free of finite rank $r$ over its center $C$. Let $f \in R$ such that  $f^* \in C$. If $f^*$ is a prime twosided polynomial and $\deg f^*=\sqrt{r}\deg f$ then $f$ is irreducible and $\rend{R}{R/Rf}$ is commutative. 
\end{proposition}

\begin{proof}
 We claim that  $R/Rf$ must be indecomposable. To see this, consider the inequality (\ref{eq:bound}) of the proof of Theorem \ref{cotabound}. If $\deg f^* = \sqrt{r} \deg f$, then, by \eqref{eq:bound}, $\deg f^*=\deg g^*h^*$, which is only possible if $Rf^*=Rg^*h^*$, and $f^*$ cannot be then prime twosided. Now, \eqref{eq:ind} applies, and we get thus that $l = 1$. Hence, $f = p_1$, which is irreducible.  Finally we deduce from \eqref{eq:simple} that $\rend{R}{R/Rf} \cong C/Cf^*$, a commutative ring. 
\end{proof}

 The idea of proof of Lemma \ref{centrof^*} is that of the first part of the proof of \cite[Theorem 4.3]{Giesbrecht:1998}, where the center of $R/Rf^*$ is computed in the case $R = \ffield{}[X;\sigma]$ for $\ffield{}$ a finite field. 
\begin{lemma}\label{centrof^*}
Let $f \in R = D[X;\sigma]$ a bounded polynomial of positive degree such that $\rgcd{f}{X} = 1$. Then the center of $R/Rf^*$ is $C/Cf^*$, where $C$ is the center of $R$.
\end{lemma}
\begin{proof}
Let $g + R f^*$ be  in the center of  $R/Rf^*$ with $\deg g < \deg f^*$. Since for all $\alpha \in D$, $\alpha g - g \alpha \in R f^*$ and $\deg(\alpha g - g \alpha) < \deg f^*$ we get that  
\begin{displaymath}
\alpha g - g \alpha = 0 \quad \text{for all $\alpha \in D$}.
\end{displaymath}

We also have that $Xg - gX \in R f^*$ and $\deg(Xg - gX) \leq \deg f^*$, hence
\begin{displaymath}
Xg - gX = \beta f^* \quad \text{for some $\beta \in D$.}
\end{displaymath}
By Lemma \ref{quitoXm}, $\rgcd{f^*}{X} = 1$, the constant coefficient of $f^*$ is nonzero, then it follows that $\beta = 0$ and, therefore, 
\begin{displaymath}
Xg - gX = 0.
\end{displaymath}
We deduce that $g \in C$ because it commutes with $X$ and every element in $D$.
\end{proof}

\begin{remark}\label{casosbuenos}
Let $R = D[X;\sigma]$ be as in Remark \ref{sinderivacion}. By Lemma \ref{quitoXm}, if $\rgcd{f}{X} = 1$ then  $f^*$ belongs to the center $K[z]$ of $R$ up to a scalar in $D$. Since $X^m$ can be easily extracted as a right factor, we do not lose generality if we assume $\rgcd{f}{X} = 1$. As a consequence of \cite[Theorem 2.8]{Lam/Leroy:1988} or \cite[Theorem 1.1.22]{Jacobson:1996}, $f^*$ is a prime twosided polynomial in $R$ if and only if $f^*$ is an irreducible polynomial in $K[z]$.   In the differential case $R = D[X;\delta]$, every twosided polynomial belongs, up to a scalar in $D$, to the center $C$ of $R$ (see \cite[Theorem 1.1.32]{Jacobson:1996}). Note that $C$ may also be a commutative polynomial ring in this case. Concretely, by \cite[Theorem 1.1.32]{Jacobson:1996}, if the derivation is inner or the characteristic of $D$ is positive and it satisfies a polynomial equation, then $C = K[z]$ for some subfield $K$ of $D$ and some central nonconstant polynomial $z \in R$.
\end{remark}

\begin{proposition}\label{endconmu}
Let $R = D[X;\sigma]$ be as in Remark \ref{sinderivacion}, and consider $f \in R$ of positive degree such that $\rgcd{f}{X} = 1$ (hence, $f^*$ belongs to the center $K[z]$ of $R$). Then $f^*$ is irreducible in $K[z]$ and $\deg f^*=\sqrt{r}\deg f$ if an only if $f$ is irreducible in $R$ and $\rend{R}{R/Rf}$ is commutative. 
\end{proposition}
\begin{proof}
Observe that $f^*$ belongs to the center by Lemma \ref{quitoXm}. By Proposition \ref{boundmaxdegree}, we only need to show that if $f$ is irreducible and $\rend{R}{R/Rf}$ is commutative, then $\deg f^* = \sqrt{r} \deg f$. By Lemma \ref{centrof^*}, the center of $R/Rf^*$ is $C/Cf^*$. On the other hand, the center of $\rend{R}{R/Rf} = \rend{R/Rf^*}{R/Rf}$ is the center of $R/Rf^*$, since this last algebra is simple. Thus, the commutativity of $\rend{R}{R/Rf}$ already implies that $\rend{R}{R/Rf} \cong C/Cf^*$. By \eqref{mr} and \eqref{eq:simple}, $\deg f^* = \sqrt{r} \deg f$. 
\end{proof}

\begin{example}
Let $f \in R$ be the polynomial in Example \ref{boundF(t)}. Then $\deg f^* = 6 = 3 \deg f$ and the order of $\sigma$ is $3$. Now, $f^*$ viewed in $C(R)$ is $$\widehat{f}=z^{2} + \left(\frac{(a^{3} + a) s + a^{2} + a + 1}{a^{2} s +
a^{2} + a}\right) z + a^{3} s^{2} + 1,$$
where $s=t^3$. Since $\widehat{f}$ is irreducible in $C(R)$ we can deduce from Proposition \ref{boundmaxdegree} that $f$ is irreducible. 
\end{example}

 Corollary \ref{irr_finitefield} below completes \cite[Theorem 4.3]{Giesbrecht:1998}.

 \begin{corollary}\label{irr_finitefield}
 $R = \ffield{}[X;\sigma]$, where $\sigma$ is an automorphism of order $\mu$ of a finite field $\ffield{}$, and $f \in R$ be such that $\rgcd{f}{X} = 1$. Then $f$ is irreducible if and only if $f^* \in  \ffield{}^{\sigma}[z]$ is irreducible and $\deg f^* = \mu \deg f$. 
\end{corollary}
\begin{proof}
If $f$ is irreducible, then $\rend{R}{R/Rf}$ is a finite division ring over the finite field $C/Cf^*$, where $C = \ffield{}^{\sigma}[z]$. By Little Wedderburn Theorem, $\rend{R}{R/Rf}$ is a commutative field. The corollary now follows from Proposition  \ref{endconmu}.
\end{proof}

\begin{corollary}\label{numirredfact}
Let $R = \ffield{}[X;\sigma]$, where $\sigma$ has order $\mu$, $f \in R$ be such that $\rgcd{f}{X} = 1$, and assume $f^* \in  \ffield{}^{\sigma}[z]$ is irreducible. Then $\deg f^* = \frac{\mu \deg f}{t}$, where $t$ is the number of irreducible factors of $f$. 
\end{corollary}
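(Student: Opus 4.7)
The plan is to reduce to Proposition \ref{irr_finitefield} applied separately to each irreducible factor of $f$, after first showing that all these factors have the same bound, namely $f^*$ itself.

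First I would factorize $f = p_1 p_2 \cdots p_t$ into irreducibles in $R$ and examine the canonical filtration
\[
Rf = Rp_1\cdots p_t \subset Rp_1 \cdots p_{t-1} \subset \cdots \subset Rp_1 \subset R.
\]
Using that multiplication on the right by $p_1 \cdots p_{k-1}$ in the domain $R$ induces an isomorphism $R/Rp_k \cong Rp_1\cdots p_{k-1}/Rp_1\cdots p_k$, the composition factors of $R/Rf$ are precisely the simple modules $R/Rp_1, \ldots, R/Rp_t$.

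Next I would argue that all these composition factors are mutually isomorphic and share the bound $f^*$. Since $f^*$ is irreducible in $\ffield{}^\sigma[z]$, the cited result of Jacobson (mentioned in the remark preceding Proposition \ref{irr_finitefield}) yields that $Rf^*$ is a maximal twosided ideal, so $R/Rf^*$ is a simple Artinian ring with (up to isomorphism) a unique simple left module. Because $Rf^* = \operatorname{Ann}_R(R/Rf)$ annihilates every subquotient of $R/Rf$, each $R/Rp_i$ is a simple left $R/Rf^*$-module and is therefore isomorphic to any other $R/Rp_j$. Taking annihilators and using maximality of both $Rf^*$ and $Rp_i^*$ (the latter by \cite[Thm.\ 13, p.\ 40]{Jacobson:1943}), I get $Rp_i^* = Rf^*$ for all $i$.

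Then I would verify the hypothesis $\rgd{p_i}{X}=1$ required to apply Proposition \ref{irr_finitefield} to each $p_i$. The key observation is that in $R = \ffield{}[X;\sigma]$ (no derivation) the constant-term map $R \to \ffield{}$ is multiplicative: if $g = \sum a_i X^i$ and $h = \sum b_j X^j$, then the constant term of $gh$ is $a_0 b_0$ because $X^i b_j X^j = \sigma^i(b_j)X^{i+j}$ contributes to $X^0$ only when $i=j=0$. Hence if some $p_i$ had zero constant term (equivalently $X \mid_r p_i$), then $f$ would also have zero constant term, contradicting $\rgd{f}{X}=1$.

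With the hypotheses verified, Proposition \ref{irr_finitefield} gives $\deg p_i^* = \mu \deg p_i$ for every $i$. Combined with $Rp_i^* = Rf^*$, this forces $\deg p_i = \deg f^*/\mu$ for all $i$, and summing over $i=1,\ldots,t$ yields $\deg f = t\deg f^*/\mu$, i.e.\ $\deg f^* = \mu\deg f/t$. The only subtle step is the second one, where one must invoke the simple Artinian structure of $R/Rf^*$ to identify the composition factors; the rest is a direct application of earlier results.
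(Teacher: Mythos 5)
Your proof is correct and follows essentially the same route as the paper: the paper's one-line argument decomposes $R/Rf$ as a direct sum of the simple $R/Rf^*$-modules $R/Rf_i$ (using that $R/Rf^*$ is simple Artinian) and then invokes Proposition \ref{irr_finitefield} on the irreducible factors, which is just a repackaging of your composition-series and annihilator-maximality argument. Your extra verification that $\rgd{p_i}{X}=1$ for each irreducible factor, via multiplicativity of constant terms, is a hypothesis check the paper leaves implicit, and it is correctly done.
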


\begin{proof}
Let $f = p_1 \dots p_t$, with $p_1, \dots, p_t$ irreducible. The result follows by Corollary \ref{irr_finitefield} and the isomorphism of left $R/Rf^*$--modules
\begin{displaymath}
\frac{R}{Rf} \cong \frac{R}{Rp_1} \oplus \dots \oplus \frac{R}{Rp_t}.
\end{displaymath}
\end{proof}

\begin{example}\label{complejos}
 Corollary \ref{irr_finitefield} does not hold if \ffield{} is not finite. Let $\sigma: \Cset \rightarrow \Cset$ be the complex conjugation, and $R = \Cset[X;\sigma]$. Let $f = X^2 + 1 \in \mathbb{R}[X^2] = C(R)$. Then $f^* = f$. Moreover $R/Rf \cong \mathbb{H}$, the Hamilton's quaternions algebra, hence $f$ is irreducible in $R$. But $\deg f^* = \deg f = 2 \neq 4 = 2 \deg f$.
\end{example}

Let $f \in \ffield{}[X;\sigma]$, where $\mu$ is the order of $\sigma$. In the second point of \cite[Proposition 2.1.17]{Caruso/LeBorgne:2017} the reduced norm $\mathcal{N}(f)$ is used to provide an irreducibility criterion. This can be obtained from Corollary \ref{irr_finitefield}.

\begin{corollary} \cite[Proposition 2.1.17]{Caruso/LeBorgne:2017}\label{Caruso}
Let $R = \ffield{}[X;\sigma]$, where $\sigma$ has order $\mu$, and $f \in R$ be such that $\rgcd{f}{X} = 1$. Then $f$ is irreducible if and only if $\mathcal{N}(f)$ is irreducible as a polynomial over $C(R) = \ffield^\sigma[X^\mu]$. 
\end{corollary}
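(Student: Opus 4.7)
The plan is to deduce this from Proposition \ref{irr_finitefield} by linking the reduced norm $\mathcal{N}(f)$ to the bound $f^*$. I rely on three standard properties of the reduced norm established in \cite{Caruso/LeBorgne}: (i) $\mathcal{N}(f) \in C(R) = \ffield^\sigma[X^\mu]$; (ii) $\deg \mathcal{N}(f) = \mu \deg f$; and (iii) $\mathcal{N}$ is multiplicative, $\mathcal{N}(gh) = \mathcal{N}(g)\mathcal{N}(h)$. I also use the annihilation property $\mathcal{N}(f) \in Rf$, which follows from defining $\mathcal{N}(f)$ via a (reduced) determinant of right multiplication by $f$. Under the hypothesis $\rgd{f}{X}=1$, Remark \ref{sinderivacion} together with Lemma \ref{quitoXm} ensures that $f^*$ lies in $C(R)$ up to a scalar in $\ffield$ and generates $Rf\cap C(R)$; hence $f^* \mid \mathcal{N}(f)$ in $C(R)$.

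For the forward direction, suppose $f$ is irreducible. By Proposition \ref{irr_finitefield}, $f^*$ is irreducible in $C(R)$ and $\deg f^* = \mu \deg f$. Combining the divisibility $f^* \mid \mathcal{N}(f)$ with the degree equality $\deg \mathcal{N}(f) = \mu \deg f = \deg f^*$ from (ii), the polynomials $\mathcal{N}(f)$ and $f^*$ differ by a nonzero scalar in $\ffield^\sigma$, so $\mathcal{N}(f)$ is irreducible. For the reverse direction I argue by contrapositive. If $f=gh$ with $g,h\in R$ of positive degree, then multiplicativity (iii) and the degree formula (ii) give $\mathcal{N}(f) = \mathcal{N}(g)\mathcal{N}(h)$ with both factors of positive degree in $C(R)$; hence $\mathcal{N}(f)$ is reducible. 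Therefore irreducibility of $\mathcal{N}(f)$ forces irreducibility of $f$.

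The principal obstacle is simply importing properties (i)--(iii) and the annihilation statement from \cite{Caruso/LeBorgne}, since the reduced norm is not developed in the present paper. Once these are in hand, the argument is a short degree-and-divisibility comparison that exploits the equality case of the bound $\deg f^* \leq \sqrt{r}\deg f = \mu \deg f$ of Theorem \ref{cotabound} (with $r=\mu^2$), precisely the case characterized in Proposition \ref{irr_finitefield} as irreducibility of $f$.
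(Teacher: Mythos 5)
Your proposal is correct and follows essentially the same route as the paper: both proofs hinge on the facts that $\mathcal{N}(f)$ is a central (twosided) multiple of $f$, so $f^*$ divides $\mathcal{N}(f)$ in $C(R)$, that $\deg \mathcal{N}(f)=\mu\deg f$, and then on Proposition \ref{irr_finitefield}. The only divergence is in the converse direction, where the paper simply notes that irreducibility of $\mathcal{N}(f)$ forces $f^*$ to be an associate of $\mathcal{N}(f)$ (so Proposition \ref{irr_finitefield} applies again), whereas you argue by contrapositive via multiplicativity of the reduced norm --- an additional property imported from \cite{Caruso/LeBorgne} that the paper does not need, but which is equally valid here.
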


\begin{proof}
Since $\mathcal{N}(f)$ is a twosided multiple of $f$, we get that $f^*$ divides $\mathcal{N}(f)$. By construction, $\deg \mathcal{N}(f) = \mu \deg f$. The result follows now from Corollary \ref{irr_finitefield}.
\end{proof}

Unfortunately, Corollary \ref{Caruso} does not hold if $\ffield{}$ is not a finite field. In fact, the reduced norm of the polynomial considered in Example \ref{complejos} is $\mathcal{N}(X^2+1) = (X^2+1)^2$, which is not irreducible although $X^2+1$ is so.

 \begin{punto}\label{condiciones}\textbf{Assumptions.}
In order to cover the examples described in Remark \ref{casosbuenos}, in  the remaining of this section we assume that $R$ satisfies the following conditions:
\begin{enumerate}
\item $R$ is a free module of finite rank $r$ over its center $C$ (hence, every nonzero polynomial $f \in R$ is bounded). 
\item $C = K[z]$, where $K$ is a commutative subfield of the center of $D$ invariant under $\sigma$ and such that $\delta(K) = 0$, and $z \in R$ is a (central)  nonconstant  polynomial.
\item Every nonzero twosided polynomial of $R$ is, up to multiplication by a constant in $D$, of the form $X^m \alpha$, where $\alpha \in K[z]$ and $m \geq 0$. 
\end{enumerate}
Under these conditions, getting a factorization of a twosided polynomial as a product of prime twosided polynomials is reduced to the computation of a complete factorization of a polynomial of $K[z]$ into irreducibles. In particular, $\alpha \in K[z]$ is prime twosided  if and only if $\alpha$ is irreducible as a polynomial in $K[z]$. Thus, we may consider without loss of generality polynomials $f \in R$ such that $f^* \in C = K[z]$. 
\end{punto}

For any $f \in R$ with $f^* \in C = K[z]$ we have the commutative $K$--algebra $K_f : = K[z]/K[z]f^*$, which is a finite field extension of $K$ if $f^*$ is irreducible in $K[z]$. We know that $R/Rf^*$ is then a simple finite-dimensional $K_f$--algebra. Recall that if $f$ is irreducible in $R$, then $f^*$ is irreducible in $K[z]$. 

\begin{proposition}\label{idempotenteirreducible}
Under the conditions in \ref{condiciones}, let $f \in R$  such that  $f^* \in K[z]$ is irreducible. Let $A = R/Rf^*$ and consider an idempotent $e \in A$ such that $Ae = Aa$, where $a = f + Rf^*$.  Then $f$ is irreducible if and only if the (simple) $K_f$--algebra $(1-e)A(1-e)$ is a skew field.
\end{proposition}

\begin{proof}
We know that $f$ is irreducible if and only if $R/Rf$ is simple as a left $A$--module. On the other hand, $\rend{A}{R/Rf} \cong (1-e)A(1-e)$. 
\end{proof}

We thus deduce from Proposition \ref{idempotenteirreducible} that, if Proposition \ref{boundmaxdegree} fails to check that a given $f \in R$ is irreducible, then we should check if the finite-dimensional simple $K_f$--algebra $(1-e)A(1-e)$ is a division ring. Even thought this is in general a hard problem from the computational point of view \cite{Ronyai:1987}, we will close this section by showing that at least the structure constants of $(1-e)A(1-e)$ can be computed in polynomial time from $f$. We need the following easy lemma.

\begin{lemma}
 Let $A$ be a ring, and $a \in A$. The idempotents $e \in A$ such that $Ae = Aa$ are of the form $e = ya$, where $y \in A$ is a solution of the  equation 
\begin{equation}\label{VN}
a = aya.
\end{equation}
\end{lemma}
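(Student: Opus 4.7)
The plan is to verify both directions directly from the defining equations, treating the forward direction as a bookkeeping exercise and extracting $y$ in the reverse direction from the membership relations $e\in Ae=Aa$ and $a\in Aa=Ae$.

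First I would prove the easy direction: suppose $y\in A$ satisfies $a=aya$ and set $e=ya$. Then $e^{2}=(ya)(ya)=y(aya)=ya=e$, so $e$ is idempotent. Clearly $e=ya\in Aa$, so $Ae\subseteq Aa$. Conversely $a=aya=a\cdot ya=ae$, hence $a\in Ae$ and $Aa\subseteq Ae$. So $Ae=Aa$ as required.

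For the converse, let $e\in A$ be an idempotent with $Ae=Aa$. Since $e\in Ae=Aa$, there exists $y\in A$ with $e=ya$. Since $a\in Aa=Ae$, there exists $z\in A$ with $a=ze$. Using idempotency of $e$ one gets
\[
ae = ze\cdot e = ze^{2} = ze = a,
\]
and therefore
\[
aya = (ae)\cdot \text{nothing}\ldots
\]
more cleanly: $aya = a(ya) = ae = a$. Thus $y$ solves the Von Neumann equation \eqref{VN} and produces $e=ya$ of the stated form. The semi-simplicity hypothesis on $A$ is not used in the characterization itself; its role is only to guarantee in the ambient application that every left ideal $Aa$ is generated by \emph{some} idempotent $e$, so that the lemma has non-vacuous content.

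I do not expect any real obstacle: both implications are a short chain of substitutions. The only point to be careful about is not to invert the order of factors (the algebra is non-commutative in the intended uses), and to resist the temptation to appeal to semi-simplicity in the proof, since the characterization holds for any associative unital $A$ and any $a\in A$ for which an idempotent generator of $Aa$ happens to exist.
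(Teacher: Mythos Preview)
Your proof is correct and follows essentially the same approach as the paper: both argue that from $e=ya$ and $a=ze$ (obtained from the mutual inclusions $Ae=Aa$) one gets $ae=ze^2=ze=a$ and hence $aya=ae=a$, while the reverse direction is the same short computation. Your observation that semi-simplicity plays no role in the characterization itself, and only guarantees the existence of an idempotent generator of $Aa$, is accurate and slightly sharpens the paper's presentation, which invokes semi-simplicity at the outset of the forward direction even though the argument does not use it there.
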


\begin{proof}
 Assume $Aa = Ae$ for some idempotent $e \in Aa$. Therefore, $e = ya$ and $a = ze$ for some $y, z \in A$. On the other hand, $a= ae + a(1-e) = ae + ze(1-e) = ae $. Then $y$ is a solution of the equation $aya = a$. Conversely, let $y \in A$ be a solution of the equation $aya = a$. Then $e = ya$ is idempotent and $e \in Aa$. One easily deduces that $Aa = Ae$.
\end{proof}

 \begin{proposition}
Let $R$ satisfy the conditions \ref{condiciones}. Given $f \in R$ with $f^* \in K[z]$ irreducible, there is an algorithm that computes the structural constants of the simple finite-dimensional $K_f$--algebra $(1-e)A(1-e)$ from $f$ in polynomial time in $\deg f$. 
\end{proposition}
\begin{proof}

Let $\{u_1, \dots, u_r\} \subseteq R$ be a basis of $R$ as a module over its center $C = K[z]$. We may calculate the $r^3$ structure constants $c_{ij}^k \in K[z]$ such that
\begin{equation}\label{constantesdeestructura}
u_iu_j = \sum_{k=1}^r c_{ij}^k u_k, \qquad (i,j = 1, \dots, r)
\end{equation}
The commutative polynomials $c_{ij}^k$ in $K[z]$, as well as the expressions \eqref{constantesdeestructura}, may be computed as part of the structure of the ring $R$, and they do not depend on $f$. The bound $f^*$ is computed by means of  Algorithm \ref{alg:bound} or  Algorithm \ref{alg:bound2}. If $f^* \in K[z]$, then we have the field extension $K \subseteq K_f = C/C f^*$. By Lemma \ref{freeness2factor}, $A = R/Rf^*$ is a $K_f$--algebra with basis $\{ \overline{u}_1, \dots, \overline{u}_r \}$, where $\overline{u_i}$ stands for the class of $u_i$ modulo $Rf^*$.
We get from \eqref{constantesdeestructura} that the product of the $K_f$--algebra $A$ is determined by the expressions
\begin{equation}\label{constantesdeestructurahat}
\overline{u}_i\overline{u}_j = \sum_{k=1}^r \widehat{c}_{ij}^k \overline{u}_k, \qquad (i,j = 1, \dots, r),
\end{equation}
where $\widehat{c}_{ij}^k$ denotes the class of $c_{ij}^k$ modulo $C f^*$ in $K_f$. We should be able to do linear algebra computations over $K_f$. In particular, we are assumed to know how to solve systems of linear equations over $K_f$. For instance, once $a = f + Rf^* \in A$ is expressed in coordinates with respect to the basis $\{ \overline{u}_1, \dots, \overline{u}_r \}$, say $(a_1, \dots, a_{r})$,  equation \eqref{VN} leads to a system of linear equations over $K_f$

\begin{equation}\label{VNlinear}
\sum_{j=1}^r \left( \sum_{i,k,l = 1}^r a_i a_l \widehat{c}_{ij}^k \widehat{c}_{kl}^s \right) y_j = a_s \quad \text{for all $1 \leq s \leq r$.}
\end{equation}

We can compute a solution $y$, expressed by its coordinates $(y_1, \dots, y_r)$ in the basis $\{ \overline{u}_1, \dots, \overline{u}_r \}$. From this, we get the idempotent $e = ya$ and, henceforth, $1-e$. 

A system of generators of $(1-e)A(1-e)$ as a vector space over $K_f$ is $\{ (1-e)\overline{u}_1(1-e), \dots, (1-e)\overline{u}_r(1-e) \}$. Hence, we can compute a basis of $(1-e)A(1-e)$ from this set of generators (e.g. selecting a linearly independent subset) and then compute the structure constants corresponding to such a basis. As we have shown, all of this is linear algebra over $K_f$, so it can be done in polynomial time.
\end{proof}

\section{A note about factorization}\label{factoriz}

Let $f \in R = D[X;\sigma,\delta]$  be a bounded polynomial of positive degree. Our aim is to show that, once a bound $f^*$ is computed, we can use a factorization of $f^*$ into prime twosided polynomials (see \cite[Theorem 9, p. 38]{Jacobson:1943}) to obtain a \emph{rough factorization} of $f$, namely, $f = g_1 \cdots g_r$ such that $g_i^*$ is prime twosided for all $i = 1, \dots, r$. In this way, the problem of factorizing $f$ into irreducibles is reduced to the problem of computing zero divisors in the finite-dimensional simple $C/C{g_i}^*$--algebras $R/R{g_i}^*$. 

\begin{proposition}\label{factordenormalfactor}
Let $f \in R = D[X;\sigma,\delta]$ be a bounded polynomial with bound $f^*$, and let $\pi \in R$ be a proper twosided  divisor of $f^*$. Then $p = \rgcd{f}{\pi}$ is a right divisor of $f$ such that $p^* = \pi$. In particular, $f$ has a proper factorization $f = gp$, for some bounded polynomial $g \in R$.
\end{proposition}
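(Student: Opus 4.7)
Plan. I would break the proof into three steps: extracting a proper right divisor, establishing the easy inclusion of twosided ideals, and then closing up to the equality $p^*=\pi$.

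Step 1 (proper factorization). By definition of $\rgd{f}{\pi}$, $Rp=Rf+R\pi$, so in particular $Rf\subseteq Rp$; hence $p$ is a right divisor of $f$ and we may write $f=gp$. Similarly $\pi\in Rp$ gives $\pi=hp$ for some $h\in R$, and from $Rp=Rgp+Rhp=(Rg+Rh)p$ we read off $Rg+Rh=R$, i.e.\ $g$ and $h$ are right coprime. To see the factorization is \emph{proper}, suppose $g\in D$; then $Rf=Rp\supseteq R\pi$, so $R\pi$ is a twosided ideal contained in $Rf$, forcing $R\pi\subseteq Rf^*$. Combined with $Rf^*\subseteq R\pi$ (the hypothesis $\pi\mid_r f^*$) this yields $R\pi=Rf^*$, contradicting that $\pi$ is a proper divisor of $f^*$.

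Step 2 (easy inclusion). $R\pi$ is a twosided ideal sitting inside $Rp$, and $Rp^*$ is by definition the largest twosided ideal contained in $Rp$; hence $R\pi\subseteq Rp^*$, equivalently $p^*\mid_r\pi$.

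Step 3 (the reverse inclusion $Rp^*\subseteq R\pi$). The cleanest formulation is module-theoretic: since $\pi$ is twosided and $\pi\in Rp$, the module $R/Rp$ acquires a natural structure of cyclic left $R/R\pi$-module, and its $R/R\pi$-annihilator equals $Rp^*/R\pi$; thus what we want is exactly that $R/Rp$ is a \emph{faithful} $R/R\pi$-module. To extract this, I would write $p^*=kp$ (using $Rp^*\subseteq Rp$) and $\pi=\alpha p^*$ (using $R\pi\subseteq Rp^*$); a short check using $R\pi=\pi R$ and $Rp^*=p^*R$ shows $\alpha$ is twosided, and comparing with $\pi=hp$ gives $h=\alpha k$, so $k\mid_r h$. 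Together with $\gcd_r(g,h)=1$ from Step 1, this forces $\gcd_r(g,k)=1$, so there is a Bezout identity $ag+bk=1$, which after right multiplication by $p$ reads $p=af+bp^*$. I would then conclude $\alpha$ is a unit by reducing this identity modulo $R\pi$ and invoking the twosided character of $p^*$.

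Main obstacle. The nontrivial content is the last move of Step~3: converting the identity $p=af+bp^*$, together with $p^*$ being twosided and the specific way $\pi$ divides $f^*$, into $\deg\alpha=0$. A possible alternative route is to argue by contradiction: if $Rp^*\supsetneq R\pi$, then by the unique factorization of twosided ideals (Theorem~\ref{twofactorization}) there is a strict twosided divisor $\tau$ of $\pi$ with $R\pi\subsetneq R\tau\subseteq Rp^*\subseteq Rp$, hence $p\mid_r\tau$; using $Rp=Rf+R\pi$ one checks $\rgd{f}{\tau}=p$ and then, playing $\tau$ against $\pi$ via the right coprimeness of $g$ and $h$, one arrives at a contradiction with $\tau\mid_r\pi$ being strict. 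Either way, the subtlety lies in transferring information across the twosided/one-sided divide.
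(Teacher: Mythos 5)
Your Steps 1 and 2 are fine, but the heart of the proposition --- the inclusion $Rp^*\subseteq R\pi$ --- is exactly the part you leave open, and the ingredients you assemble for it cannot close it, because nowhere in Step 3 (nor in your alternative route) does the hypothesis that $\pi$ divides $f^*$ actually enter. That hypothesis is indispensable, and any argument avoiding it is doomed. Indeed, take any bounded $f$ of positive degree and put $\pi=(f^*)^2$, which is twosided but is a proper \emph{multiple}, not a proper divisor, of $f^*$. Then $R\pi\subseteq Rf^*\subseteq Rf$, so $p=\rgd{f}{\pi}=f$ and $g=1$, and every fact you derive in Step 3 holds verbatim: $p^*=kp$, $\pi=\alpha p^*$ with $\alpha=f^*$ twosided, $h=\alpha k$, $\rgd{g}{h}=1$, $\rgd{g}{k}=1$, and the Bezout identity $p=af+bp^*$ (with $a=1$, $b=0$); likewise in your alternative route one has $\tau=p^*$, $p$ a right divisor of $\tau$, and $\rgd{f}{\tau}=p$, with no contradiction in sight. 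Yet $p^*=f^*\neq\pi$ and $\alpha$ has positive degree. So no reduction of $p=af+bp^*$ modulo $R\pi$, and no playing of $\tau$ against $\pi$, can force $\deg\alpha=0$: the conclusion simply does not follow from the facts you use. The missing idea is how to bring the \emph{cofactor} of $\pi$ inside $f^*$ into play.

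The paper closes the argument precisely there. Write $f^*=\alpha\pi$ with $\alpha$ twosided (your own remark that cofactors of twosided elements inside twosided elements are again twosided applies). Since $Rp=Rf+R\pi$ and $\alpha R\pi=\alpha\pi R=f^*R\subseteq Rf$, while $\alpha Rf\subseteq Rf$, the polynomial $\alpha$ lies in $\Ann_R(Rp/Rf)$. The exact sequence $0\to Rp/Rf\to R/Rf\to R/Rp\to 0$ gives $\Ann_R(Rp/Rf)\cdot\Ann_R(R/Rp)\subseteq\Ann_R(R/Rf)=Rf^*$, hence $\alpha p^*\in Rf^*=R\alpha\pi=\alpha R\pi$, and cancelling $\alpha$ (the ring is a domain) yields $p^*\in R\pi$; your Step 2 supplies the reverse inclusion, so $p^*=\pi$. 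Note also that you never address the final clause of the statement, namely that the left factor $g$ is bounded; in the paper this drops out of the same computation, since $Rp/Rf\cong R/Rg$ as left $R$--modules and $\alpha$ annihilates it.
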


\begin{proof}
From the exact sequence of left $R$--modules 
\[
\xymatrix{0 \ar[r] & \displaystyle\frac{Rp}{Rf} \ar[r] & \displaystyle\frac{R}{Rf} \ar[r] & \displaystyle\frac{R}{Rp} \ar[r] & 0},
\]
we get 
\begin{equation}\label{anuladores}
Ann_R\left ( \frac{Rp}{Rf} \right )\cdot Ann_R \left (\frac{R}{Rp} \right ) \subseteq Ann_R \left (\frac{R}{Rf} \right ).
\end{equation}
By hypothesis, there is a proper factorization  $f^* =   \alpha \pi$, where $\alpha$ is a twosided monic polynomial $\alpha\in R$. Since $Rp = Rf + R\pi$,  $\alpha R \pi \subseteq Rf$, and $\alpha R f \subseteq Rf$, we get that $\alpha \in Ann_R(Rp/Rf)$. 
Thus, by \eqref{anuladores}, it yields 
$\alpha p^* \in Rf^*$. Since $f^* = \alpha \pi$, we get that $Rp^* \subseteq R \pi$. But $R \pi \subseteq R p$, which implies that $R \pi \subseteq R p^*$.  Therefore, $p^* = \pi$.  
Finally, since $p$ is a right divisor of $f$, $f = gp$, for some $g \in R$. By the isomorphism of left $R$--modules, 
\[
\frac{Rp}{Rf} = \frac{Rp}{Rgp} \cong \frac{R}{Rg},
\]
$\alpha$ annihilates $R/Rg$, whence $g$ is bounded. 
\end{proof}

\begin{proposition}\label{roughdecomposition}
Let $f \in R = D[X;\sigma,\delta]$ be a bounded polynomial. If we compute a factorization $f^* = \pi_1 \dots \pi_s$ where \(\pi_i\) is prime twosided for all \(1 \leq i \leq s\), then we can compute \(g_1, \dots, g_s \in R\) such that \(f = g_1 \dots g_s\) and \(g_i^* = \pi_i\) for all \(1 \leq i \leq s\). 
\end{proposition}

\begin{proof}
By Proposition \ref{factordenormalfactor} $f = g_1 p_1$, where $p_1^* = \pi_2 \dots \pi_s$ and $g_1$ is bounded with bound dividing $\pi_1$.  Since $\pi_1$ is prime it follows that $g_1^* = \pi_1$. We repeat the process with $p_1$ and its bound $\pi_2 \dots \pi_s$. So, we finally obtain $f = g_1 \dots g_s$ where $g_i^* = \pi_i$.
\end{proof}

\begin{remark}
Twosided polynomials are also known as invariant polynomials, see e.g. \cite[Definition 2.1]{Lam/Leroy:1988}, where semi-invariant polynomials are also defined. A decomposition of an invariant polynomials as a non trivial product of semi-invariant polynomials, would eventually help to get complete factorizations, following the ideas of Proposition \ref{roughdecomposition}.
\end{remark}

Observe that we only have to compute one bound, $f^*$. Then the complete factorization of $f$ into irreducibles is obtained by factorizing each $g_i$. Therefore, the following question arises: \emph{it is possible to compute a factorization into irreducibles of a bounded polynomial $g \in R$ such that $g^*$ is prime?} Next proposition complements Proposition \ref{idempotenteirreducible}.

\begin{proposition}\label{simpledecomposition}
Under the conditions in \ref{condiciones}, let $f \in R$ such that $f^* \in K[z]$ is irreducible. Let $A = R/Rf^*$, $a = f + R f^* \in A$, and $e \in A$ is an idempotent such that $Ae = Aa$. Then $f$ is reducible if and only if the $K_f$--algebra $(1-e)A(1-e)$ has zero divisors. Moreover, each zero divisor allows to compute a proper factorization \(f = h_1 h\) where \(f^* = h^* = h_1^*\).
\end{proposition}

\begin{proof}
  By Proposition \ref{idempotenteirreducible}, \(f\) is reducible if and only if \((1-e)A(1-e)\) is not a division ring, that is, $(1-e)A(1-e)$ has zero divisors. 
Given a zero divisor $\zeta \in (1-e)A(1-e)$,  a proper factorization of $f$ is computed as follows. Write $\zeta = g + Rf^*$, for some polynomial $g \in R$. We have the following chain of strict inclusions of left ideals of $R$,
 \begin{equation}\label{estrictas}
Rf \subsetneq Rf + Rg \subsetneq R. 
\end{equation}
The inclusions are strict since, if we factor out the chain by the ideal $Rf^*$, we get the proper chain of left ideals of the simple algebra $A$ given by  $Ae \subsetneq Ae \oplus A\zeta \subsetneq  A$. Now, $Rf + Rg = Rh$ for $h = \rgcd{f}{g}$, so $h$ is a proper right factor of $f$. In this way, we obtain a proper factorization $f = h_1 h$.  Furthermore, since $Rf^* \leq Rh$ and $f^*R \leq h_1R$, $h^* = f^*=h_1^*$.
\end{proof}

We may repeat the above procedure in the $C/Cf^*$--algebra $R/Rf^*$ by replacing $f$ by its factors $h$ and $h_1$. An obvious recursive process will lead to a complete factorization of $f$ into irreducible factors, as we show in Algorithm \ref{generalfactorization}. In conclusion, we will be able to obtain a complete factorization of $f$ whenever an algorithm to find zero divisors of finite-dimensional simple algebras is available. So, assume that an algorithm $\operatorname{FindZeroDivisor}$ is available, which computes a zero divisor, if it exists, of an input algebra of dimension $m$ over some field with cost $\chi(m)$. For instance, in skew polynomials over finite fields, there exist Las Vegas algorithms developed by Giesbrecht \cite{Giesbrecht:1998}, R\'onyai \cite{Ronyai:1987} or \cite{Caruso/LeBorgne:2017}. Nevertheless, as pointed out above, in general, such a problem is very hard from a computational point of view. 

\begin{remark}
We know that proper factorizations of $f$ correspond to $R/Rf^*$--submodules of $R/Rf$, and a complete factorization of $f$ into irreducibles is nothing but a composition series of $R/Rf$. In this way, the connection of $R/Rf^*$ with the factorization theory of $f$ is much closer than that of the eigenring $\mathcal{E}(Rf)$. Moreover, in general, there is no ring homomorphisms between $R$ and $\mathcal{E}(Rf)$, in contrast with the tight relationship between $R$ and $R/Rf^*$. For instance, the structure constants for the multiplication in $R/Rf^*$ come directly from the multiplication of $R$. Thus, our methods differ from that of \cite{Giesbrecht:1998} and \cite{Giesbrecht/Zhang:2003}, based on an iterative use of the eigenring. We thus obtain different algorithms. This is clear for Algorithm \ref{generalfactorization}. Let us compare Algorithm \ref{factboundirred} with the eigenring method. Since the eigenring of $R/Rf$ is isomorphic to $\rend{A}{R/Rf}$, we get that, if $f^*$ is irreducible, then $\mathcal{E}(Rf) \cong (1-e)A(1-e)$. A first factor $h$ of $f$ can be then computed either by finding a zero divisor of $\mathcal{E}(Rf)$, abstracting the ideas from \cite{Giesbrecht:1998} or \cite{Giesbrecht/Zhang:2003}, or of $(1-e)A(1-e)$, as in Proposition \ref{simpledecomposition}. But if the factor $h$ has to be further factorized, we have two different alternatives. The eigenring method will require to compute $\mathcal{E}(Rh)$, and it is not clear that the previous computation of $\mathcal{E}(Rf)$ will help to this task, and then find a zero divisor of $\mathcal{E}(Rh)$. Our method will require to replace $f$ by $h$ and, since $h^* = f^*$, apply Proposition \ref{idempotenteirreducible} by using the same ring $A = R/Rf^*$ which is already computed. Thus, both Algorithm \ref{generalfactorization} and Algorithm \ref{factboundirred} are different in nature from the algorithms of factorization from  \cite{Giesbrecht:1998} or \cite{Giesbrecht/Zhang:2003}, and they will give genuine alternatives to the latter when applied to the examples of Ore extension considered there (see also Section \ref{otherfact}).
\end{remark}

\begin{algorithm}
\caption{Factorize}\label{generalfactorization}
\begin{algorithmic}
\REQUIRE $f \in R = D[X;\sigma,\delta]$ 
\textbf{Assumptions:} $R$ satisfies conditions \ref{condiciones}.  $f^* \in K[z]$. An algorithm for factorizing polynomials in $K[z]$ is available. An algorithm for factorizing $g$ with $g^* \in K[z]$ irreducible is available.
\ENSURE A list of irreducible polynomials $[p_1,\ldots, p_l]$ such that $f=p_1\cdots p_l$.
\STATE{$output \gets []$} \COMMENT{the empty list}
\STATE{Compute a bound $f^*$}
\STATE{Factorize $f^*=\pi_1\cdots \pi_s$}
\STATE{$i \gets 1$, $f_1 \gets f$}
\WHILE{$i < s$}
	\STATE{$f_{i+1} \gets \rgcd{f_i}{\pi_{i+1} \dots \pi_s}$}
	\STATE{$g_i \gets \lquot{f_i}{f_{i+1}}$}
	\STATE{$i \gets i + 1$}
\ENDWHILE
\STATE{$g_s \gets f_s$}
\FOR{$i \gets 1$ \TO $s$}
	\STATE{$output \gets output + \operatorname{FactorizeIrred}(g_i,\pi_i)$} \COMMENT{where $+$ denotes concatenation of lists}
\ENDFOR
\RETURN $output$
\end{algorithmic}
\end{algorithm}

\begin{algorithm}
\caption{FactorizeIrred}\label{factboundirred}
\begin{algorithmic}
\REQUIRE $f \in R = D[X;\sigma,\delta]$ with $f^* \in K[z]$ irreducible. 
\textbf{Assumptions:} $R$ satisfies conditions \ref{condiciones}.  There is an algorithm $\operatorname{FindZeroDivisor}$ which returns a zero divisor of its input, if it exists, or $0$ otherwise.
\ENSURE A list of irreducible polynomials $[p_1,\ldots, p_l]$ such that $f=p_1\cdots p_l$.
\IF{$f = 1$} 
	\STATE{$output = []$}
\ELSIF{$\deg(f^*) = \sqrt{r} \deg(f)$}
	\STATE{$output = [f]$}
\ELSE
	\STATE{Compute a solution $y$ in $A = R/R f^*$ of the equation $f = fyf \pmod{Rf^*}$.}
	\STATE{$e \gets y f$}
	\STATE{$aux \gets \operatorname{FindZeroDivisor}((1-e)A(1-e))$}
	\STATE{$g \gets \rgcd{aux}{f}$}
	\STATE{$h \gets \lquot{f}{g}$}
	\STATE{$output \gets \operatorname{FactorizeIrred}(h,f^*) + \operatorname{FactorizeIrred}(g,f^*)$} 
\ENDIF
\RETURN $output$
\end{algorithmic}
\end{algorithm}

As a consequence of the previous discussions and Propositions \ref{roughdecomposition} and \ref{simpledecomposition} we have proved the following

\begin{theorem}
Let $R = D[X;\sigma,\delta]$ that satisfies conditions \ref{condiciones}. Let $f \in R$ with $f^* \in K[z]$. Then Algorithm \ref{generalfactorization} correctly factorizes $f$ into a product of irreducibles in $R$. 
\end{theorem}

\begin{example}\label{ex:factF(t)}
Let $R=D[X;\sigma]$ as described in Example \ref{boundF(t)}, i.e., $D=\ffield[16](t)$ is the field of rational functions over $\ffield[16]$ and $\sigma:D \rightarrow D$ is defined by $\sigma(t) = a^5t$. We recall the reader that the  order of $\sigma$ is $3$, the invariant subfield is $D^\sigma = \ffield[16](t^3)$ and $C= C(R) = D^\sigma[X^3]$. Hence, the rank of $R$ over $C$ is nine and, in particular, by Theorem \ref{cotabound}, $\deg g^*\leq 3\deg g$ for any $g\in R$. For brevity, we shall write the elements of $\ffield[16]$, different of 0 and 1, as powers of the primitive element $a$, which verifies $a^4=a+1$, and not as polynomials over $\ffield[2]$. Let $f\in R$ be the polynomial 
$$f=X^{3} + \left(\frac{a^{11} t^{2} + a^{12} t + 1}{t + a}\right) X^{2} + \left(\frac{a t^{3} +
a^{2} t^{2} + a^{13} t + a}{t + a}\right) X +
a^{2} t^{3} + a t.$$
By Algorithm \ref{alg:bound} or \ref{alg:bound2}, we may calculate its bound $f^*$ and get that
$$f^*=X^{9} + \left(\frac{a^8 t^{6} + t^{3} + a^{13}}{a^5 t^{3} + a^8}\right) X^{6} +
\left(\frac{t^{9} + a^4 t^{6} + a^9
t^{3} + 1}{a^{12} t^{3} + 1}\right) X^{3} +
a^6 t^{9} + a^{3} t^{3},$$
or, viewed under the change of variables $s=t^3$ and $z=X^3$,
$$\widehat{f}=z^{3} + \left(\frac{a^8 s^{2} + s + a^{13}}{a^5 s + a^8}\right) z^{2} +
\left(\frac{s^{3} + a^4 s^{2} + a^9 s
+ 1}{a^{12} s + 1}\right) z + a^6 s^{3} + a^{3} s.$$
Now, observe that $\widehat{f}$, as an element in $C$, factors out into irreducibles  as $\widehat{f}=\widehat{p_1}\widehat{p_2}$, where
$$\widehat{p_1}=z + a^{3} s$$
and 
$$\widehat{p_2}=z^{2} + \left(\frac{a^4 s + a^5}{a^{12} s + 1}\right) z + a^{3} s^{2} + 1.$$
Hence, by Proposition \ref{factordenormalfactor}, the factorization of $\widehat{f}$ give us a proper factorization $f=g_1g_2$ in $R$, where $g_2=(p_2,f)_r$ and $g_1=\lquot{f}{g_2}$. Here, $p_1$ and $p_2$ denote the polynomials $\widehat{p_1}$ and $\widehat{p_2}$, respectively, after undoing the above change of variables. Concretely,
$$g_1=X + \frac{a^{2} t^{8} + a^{2} t^{6} + a t^{5} + a^8
t^{4} + a^8 t^{2} + a^{13}
t}{a^6 t^{7} + a^{11}
t^{5} + a^5 t^{4} + a^7 t^{3}
+ a^{12} t + a^{12}}$$
and 
\[
\begin{split}
g_2&=X^{2} + \left(\frac{a^4 t^{6} +a^{12} t^{5} + t^{4} + a^4 t^{3} + a^{3} t^{2} + a^{10} t + a^{11}}{a t^{7} + a t^{5} + t^{4} + a^7 t^{3} + a^7 t + a^{12}}\right) X \\
&\quad + \frac{a^{12} t^{9} +a^9 t^{7} + a^{11} t^{6} + a^{12} t^{5} + a^{10} t^{4} +a^{10} t^{3} + a^{3} t^{2} +
a^{2} t + a^{2}}{a^6 t^{7} + a^6 t^{5} + a^5 t^{4} + a^{12} t^{3} + a^{12} t + a^{2}}.
\end{split}
\]
Furthermore, $p_1$ and $p_2$ are bounds of $g_1$ and $g_2$, respectively. Clearly, $g_1$ is irreducible. Now, since $p_2$ is irreducible, as polynomial in the center, and $\deg p_2=3\deg g_2$, by Proposition \ref{boundmaxdegree}, $g_2$ is also irreducible. Hence, the factorization of $f$ in $R$ is completed.
\end{example}

\begin{example}\label{ex:factquat}
Let us consider the skew polynomial algebra $R=D[X;\sigma]$ described in Example \ref{ex:quaternions}, i.e., $D=\mathbb{Q}\oplus \mathbb{Q}i \oplus \mathbb{Q} j \oplus \mathbb{Q} k$, where $i^2=-1$, $j^2=-1$ and $ij=-ji=k$, and $\sigma$ is the inner automorphism given by $\sigma(a)=u a u^{-1}$, where $u=1+i$. As observed in Example \ref{ex:quaternions}, $\sigma$ has order one with respect to an inner automorphism, and its invariant subfield is $D^\sigma=\mathbb{Q}\oplus \mathbb{Q}i$, the center of $R$ is $C = C(R)=\mathbb{Q}[z]$, where $z=u^{-1}X$, and a basis of $R$ over $C$ is given by $\{1,i,j,k\}$. Let us factorize the polynomial
\begin{displaymath}
\begin{split}
f &= \left(3 - 4i + \tfrac{1}{2}j + \tfrac{7}{2}k\right)X^7 + \left(-\tfrac{77}{6} - \tfrac{313}{42}i + \tfrac{80}{21}j - \tfrac{37}{14}k\right)X^6 \\
&\quad + \left(-\tfrac{211}{42} + \tfrac{919}{42}i - \tfrac{110}{21}j + \tfrac{61}{21}k\right)X^5 + \left(\tfrac{215}{7} - \tfrac{23}{21}i + \tfrac{107}{7}j - \tfrac{575}{42}k\right)X^4 \\
&\quad + \left(\tfrac{22}{21} - \tfrac{149}{7}i - \tfrac{283}{21}j + \tfrac{347}{21}k\right)X^3 + \left(-\tfrac{36}{7} - \tfrac{13}{21}i - \tfrac{22}{3}j - \tfrac{129}{7}k\right)X^2 \\
&\quad + \left(-\tfrac{62}{21} + \tfrac{58}{7}i + \tfrac{90}{7}j - \tfrac{58}{21}k\right)X - \tfrac{40}{21} + \tfrac{80}{21}i - \tfrac{40}{7}j + \tfrac{80}{21}k
\end{split}
\end{displaymath}
using Algorithm \ref{generalfactorization}. Algorithm \ref{alg:bound} computes a bound $f^*$, and after making the change of variable $z=u^{-1}X$ we obtain
\begin{displaymath}
\begin{split}
\widehat{f} &= 4096 z^{12} - \tfrac{684032}{175}z^{11} + \tfrac{2526208}{315}z^{10} - \tfrac{8259584}{1575}z^9 + \tfrac{830464}{105}z^8 - \tfrac{1119232}{225}z^7 \\
&\quad + \tfrac{8461568}{1575}z^6 - \tfrac{905728}{315}z^5 + \tfrac{226816}{105}z^4 - \tfrac{436736}{525}z^3 + \tfrac{160256}{225}z^2 - \tfrac{111616}{1575}z + \tfrac{2048}{35} \\
&= 4096 \left(z^2 - \tfrac{22}{21}z + \tfrac{20}{21}\right) \left(z^4 + \tfrac{1}{3}z^3 + \tfrac{13}{12}z^2 + \tfrac{1}{3}z + \tfrac{3}{8}\right) \\
&\quad \cdot \left(z^6 - \tfrac{6}{25}z^5 + \tfrac{1}{10}z^4 - \tfrac{8}{25}z^3 + \tfrac{9}{25}z^2 - \tfrac{1}{25}z + \tfrac{1}{25}\right),
\end{split}
\end{displaymath}
which yields the following factorization of $f = g_1 g_2 g_3$, where
\begin{align*}
g_1 &= (3 - 4i + \tfrac{1}{2}j + \tfrac{7}{2}k)X^2 + (-\tfrac{22}{3} + \tfrac{22}{21}i - \tfrac{88}{21}j - \tfrac{22}{7}k)X + \tfrac{160}{21} + \tfrac{40}{7}i + \tfrac{20}{3}j - \tfrac{20}{21}k \\
g_2 &= X^2 + (\tfrac{3}{10} + \tfrac{1}{30}i - \tfrac{14}{15}j - \tfrac{11}{15}k)X - \tfrac{16}{15} + \tfrac{1}{3}i - \tfrac{1}{2}j + \tfrac{1}{30}k \\
g_3 &= X^3 + (\tfrac{8}{25} - \tfrac{14}{25}i + \tfrac{13}{25}j + \tfrac{14}{25}k)X^2 + (-\tfrac{6}{25} - \tfrac{2}{5}i - \tfrac{2}{5}j - \tfrac{8}{25}k)X + \tfrac{6}{25} + \tfrac{4}{25}i + \tfrac{2}{25}j - \tfrac{12}{25}k.
\end{align*}
We have to apply Algorithm \ref{factboundirred} to $g_1,g_2,g_3$ and their corresponding bounds. For $i = 2,3$, $\deg g_i^* = 2 \deg g_i$, hence we conclude that $g_2$ and $g_3$ are irreducible. It remains to factor out $g_1 = g_1^*$. Let $K_{g_1} = \mathbb{Q}[z]/\langle \widehat{g_1} \rangle$, where $\widehat{g_1} = z^2 - \tfrac{22}{21} z + \frac{20}{21}$. Then $A = R/Rg_1^*$ is isomorphic to the standard quaternion algebra over $K_{g_1}$. So the problem is reduced to find a zero divisor, if it exists, in $A$. By \cite[Chapter V, \S 57]{OMeara:2000} this problem is equivalent to find elements $a,b \in K_{g_1}$ such that $a^2 + b^2 = -1$. This problem is expected to be hard, since Ronyai proves in \cite{Ronyai:1988} that the computation of zero divisors in quaternion algebras over $\mathbb{Q}$ is $\text{NP} \cap \text{co-NP}$. However, in this example, we can find zero divisors if we can compute a rational point in the variety over $\mathbb{Q}$ defined by the ideal
\begin{displaymath}
\left\langle \tfrac{22}{21}a_1^2 + 2 a_1 a_2 + \tfrac{22}{21}b_1^2 + 2 b_1 b_2, a_2^2 - \tfrac{20}{21}a_1^2 + b_2^2 - \tfrac{20}{21}b_1^2 +1 \right\rangle \subseteq \mathbb{Q}[a_1,a_2,b_1,b_2].
\end{displaymath}
A clever use of ``brute force" and Groebner bases techniques yields to 
$\left(\tfrac{63}{10}, -5, -\tfrac{21}{10},-4\right)$, which corresponds to the zero divisor $\left(\tfrac{63}{10}t - 5\right) - \left(\tfrac{21}{10}t +4\right)i + j$. Recall that $z = u^{-1}X$, hence 
\begin{displaymath}
h = \left(\tfrac{63}{10}u^{-1}X - 5\right) - \left(\tfrac{21}{10}u^{-1}X +4\right)i + j = \left(\tfrac{21}{10} - \tfrac{21}{5}i\right)X - 5 - 4i + j
\end{displaymath}
is a right factor of $g_1$, since 
\begin{displaymath}
g_1 = \left(\left(\tfrac{22}{21} + \tfrac{4}{21}i + \tfrac{5}{7}j + \tfrac{5}{21}k\right)X - \tfrac{190}{147} + \tfrac{10}{441}i - \tfrac{470}{441}j - \tfrac{290}{441}k \right) \left(\left(\tfrac{21}{10} - \tfrac{21}{5}i\right)X - 5 - 4i + j\right),
\end{displaymath}
and the factorization is completed.
\end{example}

\begin{example}
Let $R=\mathbb{F}_2(t)[X;\delta]$, where $\delta$ is the usual derivative. We recall that the center of $R$ is $C(R)=\mathbb{F}_2(s)[z]$, where $s=t^2$ and $z=X^2$. Let now 
$$f=X^{2} + \left(\frac{1}{t^{2} + t}\right) X + \frac{1}{t^{3} + t}.
$$ 
We follow the steps of Algorithm \ref{generalfactorization}. We firstly need to compute the bound of $f$. By applying Algorithm \ref{alg:bound}, a bound $f^*=X^4$, so $\widehat{f}=z^2=z\cdot z$. Then $f$ is reducible, and we may find a factor as $g_1=\rgcd{f}{X^2}=X + \frac{1}{t + 1}$. So $f=g_2g_1$, where $g_2=X + \frac{1}{t}$. Obviously, $g_1$ and $g_2$ are irreducible.
\end{example}

\section{Other approaches to factorization}\label{otherfact}

\subsection{Finite fields} \label{ffield}

The factorization of Ore polynomials over a finite field $\mathbb{F}$ is developed in \cite{Giesbrecht:1998} and \cite{Caruso/LeBorgne:2017}. Both references treat the pure automorphism case because,  as $\mathbb{F}$ has no nonzero derivations, by a change of variable, we may reduce the problem to a ring \(R = \mathbb{F}[X;\sigma]\), where $\sigma$ is an automorphism of $\ffield$ of order $\mu$.
Given $f  \in R$ with $\rgcd{f}{X} = 1$, we remind the reader that Algorithm \ref{generalfactorization} consists of two steps: first we compute a rough decomposition $f = g_1 \cdots g_r$ from the complete factorization of the bound $f^* = \pi_1 \cdots \pi_r$ over $\mathbb{F}^{\sigma}[X^{\mu}]$, and then we factorize each $g_i$ with $g_i^* =\pi_i$ by finding zero-divisors in some simple finite algebras over the splitting field of $\pi_i \in \mathbb{F}^{\sigma}[X^{\mu}]$. For this second step, we may use any of the algorithms in \cite{Ronyai:1987,Giesbrecht:1998,Caruso/LeBorgne:2017}.

In \cite{Giesbrecht:1998} the technique of factorization is based on the results of Ronyai \cite{Ronyai:1987} for finding zero divisors in any finite-dimensional algebra over a finite field. Concretely, for any polynomial $f\in R$ of degree $n\in \mathbb{N}$, the so-called eigenring $\mathcal{E}(Rf)$ \cite[\S 0.4]{Cohn:1971} is constructed. Recall that this eigenring is isomorphic to the endomorphism ring of the left $R$--module $R/Rf$. 
Then, a non-trivial zero divisor in $\mathcal{E}(Rf)$, as an algebra over the subfield of invariants $K=\ffield^\sigma$, provides a non-trivial decomposition of $f = gh$. The process is then applied to each of the factors $g, h$, including the computation of the structure constants of each of the eigenrings $\mathcal{E}(Rg)$ and $\mathcal{E}(Rh)$. The run-time needed for computing a complete factorization of $f$ belongs to $\bigO^\sim(n^4)$. This method is translatable to other contexts whenever the invariant subfield, the eigenring and the algorithm for finding zero divisors in it are computable, see next subsection below.

 We shall understand that Berlekamp's algorithm is used to factorize commutative polynomials over finite fields. Actually, we also assume the use of the basic operational algorithms whose complexity is calculated in Section \ref{complexity} in a more general setting than the finite field case.  Although there are faster algorithms for computing these calculations (see, for instance, \cite{vonzurGathen/Panario:2001}, for faster factorization algorithms, or Karatsuba's product), our aim here is to compare our method with the one in \cite{Giesbrecht:1998}, and show an alternative factorization algorithm which reduces the complexity with respect to the degree of the polynomial. 

\begin{lemma}\label{findzerodivisor}
Let $R = \ffield{}[X;\sigma]$, $\mu$ be the order of $\sigma$ and $f \in R$ with $\deg(f) = n$. Algorithm \emph{\texttt{FindZeroDivisor}} in \cite[\S 5]{Giesbrecht:1998} requires 
\[
\bigO(n^2 \mu^2 (\operatorname{S}(\mu)+\operatorname{M}(\mu)+\operatorname{A}(\mu)) + \operatorname{MM}(t^2) (\operatorname{S}(\tfrac{n}{t})+\operatorname{M}(\tfrac{n}{t})) + \operatorname{M}(t^2) \log(t^2) (\operatorname{S}(\tfrac{n}{t})+\operatorname{M}(\tfrac{n}{t})) \tfrac{n}{t})
\] 
operations in $K = \ffield{}^\sigma$ to compute a zero divisor of $(1-e)A(1-e)$ as an algebra over $K_f = C/Cf^*$, where $A = R/Rf^*$, $e \in A$ is an idempotent such that  $Ae = A(f+ Rf^*)$  and $t$ is the number of irreducible factors of $f$. It returns ``Failure" with probability at most $\frac{8}{9}$.
\end{lemma}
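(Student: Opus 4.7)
The plan is to trace Giesbrecht's algorithm \texttt{FindZeroDivisor} \cite[\S 5]{Giesbrecht:1998} as applied to the corner algebra $(1-e)A(1-e)$, breaking its execution into three successive phases that match the three summands of the claimed bound. By Corollary \ref{numirredfact} and Lemma \ref{freeness2factor}, $A = R/Rf^*$ is free of rank $\mu^2$ over $K_f = C/Cf^*$; writing $f^* = \widehat{f}(z)$ with $\widehat{f}$ irreducible of $z$-degree $n/t$, the field $K_f$ has degree $n/t$ over $K$, and $(1-e)A(1-e) \cong \operatorname{End}_A(R/Rf) \cong M_t(K_f)$ is a $K_f$-algebra of dimension $t^2$.

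The first phase is preprocessing: one computes the idempotent $e$ via the Von Neumann equation $a = aya$ in $A$ (with $a = f + Rf^*$), a $K_f$-basis of $(1-e)A(1-e)$, and its multiplication table. The dominant cost is the Ore-polynomial arithmetic in $R$ required for the reductions and for representing $e$; all of it reduces to left Euclidean divisions of polynomials of degree $O(\mu n)$ with coefficients in $D$. By Proposition \ref{numops:LEEAAnn} this uses $O(n^2 \mu^2)$ basic operations in $D$, which translate to $K$-operations at the rate $S(\mu) + M(\mu) + A(\mu)$ per $D$-operation, yielding the first summand.

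The second and third phases extract the zero divisor from a random sample. Choose $b \in (1-e)A(1-e)$ uniformly at random and compute its minimum polynomial $m_b \in K_f[x]$ using the Krylov matrix of the left-multiplication operator $L_b$, which is $t^2 \times t^2$ over $K_f$; fast linear algebra gives this in $O(MM(t^2))$ operations in $K_f$, each of cost $S(n/t) + M(n/t)$, contributing the second summand. Then factor $m_b$ in $K_f[x]$ via Cantor--Zassenhaus: the distinct-degree and equal-degree split use $O(M(t^2) \log t^2)$ operations in $K_f$ for the polynomial arithmetic, and the Frobenius-exponentiation subroutine introduces an additional factor $n/t$ coming from the $K$-degree of $K_f$, yielding the third summand. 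A non-trivial divisor $p$ of $m_b$ then produces the desired zero divisor $p(b) \in (1-e)A(1-e)$.

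The failure probability bound $8/9$ is the standard Las Vegas count for matrix algebras \cite[Theorem 5.3]{Giesbrecht:1998}: whenever $(1-e)A(1-e) \cong M_t(K_f)$ is not a skew field (equivalently, $t \geq 2$), a uniformly random element has a reducible minimum polynomial with probability at least $1/9$, so a single trial succeeds with probability at least $1/9$. The main obstacle I foresee is keeping the bookkeeping tight across the two scales $t$ and $n/t$, since the matrix dimension is $t^2$ while the extension degree is $n/t$, and the three complexity summands live at different levels of the hierarchy $K \subseteq K_f \subseteq (1-e)A(1-e)$; in particular, one must verify that the Krylov minimum-polynomial computation and the Cantor--Zassenhaus factorization translate cleanly from operations in $K_f$ to operations in $K$ without introducing parasitic logarithmic factors beyond those already recorded.
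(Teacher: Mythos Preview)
The paper's proof is a one-line application of \cite[Theorem~5.1]{Giesbrecht:1998}: one substitutes the dimension $m = t^2$ of $(1-e)A(1-e) \cong \mathcal{M}_t(K_f)$ over $K_f$, records that a single basic operation in $(1-e)A(1-e)$ costs $O(n^2\mu^2(S(\mu)+M(\mu)+A(\mu)))$ operations in $K$, and that a single operation in $K_f$ costs $S(n/t)$ or $M(n/t)$ in $K$. The failure bound is likewise a bare citation of \cite[Theorem~5.3]{Giesbrecht:1998}.

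Your approach instead unpacks Giesbrecht's algorithm into its constituent phases and re-derives the three summands from scratch. This is more explanatory but not what the lemma is doing: it is meant as a black-box import of Giesbrecht's complexity theorem with the parameters filled in. One concrete misattribution: you assign the first summand to computing the idempotent $e$ and the multiplication table, but the computation of $e$ is \emph{not} part of \texttt{FindZeroDivisor}; it is performed beforehand in Algorithm~\ref{factboundirred}, and its cost is accounted for separately in the proof of Proposition~\ref{numops:FactBndIrr}. In the paper's reading, the first summand is the cost \emph{per basic operation} in the algebra $(1-e)A(1-e)$ (elements are represented by Ore polynomials of degree $<\deg f^* \le \mu n$, and arithmetic on them costs $O(n^2\mu^2)$ operations in $D$); this enters Giesbrecht's Theorem~5.1 as an input parameter, not as a preprocessing step. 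Your phases~2 and~3 match the intended substitution more faithfully.
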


\begin{proof}
It follows directly from \cite[Theorem 5.1]{Giesbrecht:1998}, since the dimension of $(1-e)A(1-e) \cong \mathcal{M}_t(K_f)$ is $t^2$, the cost with respect to $K=\ffield{}^\sigma$ of the basic operations in $(1-e)A(1-e)$ is bounded by $n^2 \mu^2 (\operatorname{S}(\mu)+\operatorname{M}(\mu)+\operatorname{A}(\mu))$, and the number of basic operations in $K_f$ with respect to $K$ is written as $S(\frac{n}{t})$ sums and $M(\frac{n}{t})$ multiplications. The ``Failure" probability follows from \cite[Corollary 5.10]{Giesbrecht:1998}.
\end{proof}

\begin{proposition}\label{numops:FactBndIrr}
Let $R = \ffield{}[X;\sigma]$, $\mu$ be the order of $\sigma$, and $f \in R$ such that $\deg(f) = n$, its bound $f^*$ is irreducible and $f$ has $t$ irreducible factors, where $\deg f^* = \frac{n\mu}{t}$. Then Algorithm \ref{factboundirred} requires 
\begin{multline*}
\bigO(n \mu^3 (\operatorname{S}(\mu) + \operatorname{M}(\mu)) + n^2 \mu^2 t (\operatorname{S}(\mu)+\operatorname{M}(\mu)+\operatorname{A}(\mu)) \\
+ t \operatorname{MM}(\mu^2) (\operatorname{S}(\tfrac{n}{t})+ \operatorname{M}(\tfrac{n}{t})) + \operatorname{M}(t^2) \log(t^2) (\operatorname{S}(\tfrac{n}{t}) + \operatorname{M}(\tfrac{n}{t})) n)
\end{multline*}
 operations in $K = \ffield{}^\sigma$ to correctly factorize $f$ into a product of irreducibles in $R$. 
\end{proposition}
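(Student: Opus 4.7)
The plan is to trace the cost of each invocation of Algorithm \ref{factboundirred} and sum over the recursion, exploiting the structural invariants forced by the hypothesis that $f^*$ is irreducible in $C$.

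\emph{Structure of the recursion.} Since $f^*$ is irreducible and is passed unchanged to every recursive call, and both factors $g,h$ produced by one splitting step have $f^*$ as their bound, Corollary \ref{numirredfact} gives the invariant $n'/t'=n/t$ at every node of the recursion, where $n'$ and $t'$ denote the local degree and number of irreducible factors. This also forces $t'\leq\mu$, because $\deg f^*=\mu n'/t'\geq n'$. Hence the recursion is a binary tree with $t$ leaves and $t-1$ internal nodes, and at every such node the $K$-algebra $A:=R/Rf^*$ has $\dim_{K_f}A=\mu^2$ and $[K_f:K]=n/t$ where $K_f:=C/Cf^*$.

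\emph{Cost at an internal node.} The work splits into: (i) building the matrix of the $K_f$-linear endomorphism $y\mapsto aya$ of $A$ using the structure constants \eqref{constantesdeestructurahat} and solving the $\mu^2\times\mu^2$ Von Neumann system $a=aya$ over $K_f$, at a cost of $O(MM(\mu^2))$ operations in $K_f$, i.e.\ $O(MM(\mu^2)(S(n/t)+M(n/t)))$ operations in $K$; (ii) the product $e=ya$ in $A$, absorbed into (i); (iii) one call to \texttt{FindZeroDivisor} on $(1-e)A(1-e)$, whose cost by Lemma \ref{findzerodivisor} (with parameters $n',t'$ and using $n'/t'=n/t$) is
\[
O\bigl((n')^2\mu^2(S(\mu)+M(\mu)+A(\mu))+MM((t')^2)(S(n/t)+M(n/t))+M((t')^2)\log((t')^2)(S(n/t)+M(n/t))\tfrac{n}{t}\bigr);
\]
(iv) the extraction of $g=\rgd{aux}{f}$ and $h=\lquot{f}{g}$ which, by Proposition \ref{numops:RGCD} and Lemma \ref{numops:ldiv}, costs $O(n'I(\mu)+(n')^2(S(\mu)+M(\mu)+A(\mu)))$.

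\emph{Summing over the recursion.} With $t-1=O(t)$ internal nodes, $t'\leq\mu$ and $(n')^2\leq n^2$, block (i) contributes $O(t\cdot MM(\mu^2)(S(n/t)+M(n/t)))$, giving the third summand of the statement. In block (iii) the estimates $\sum MM((t')^2)\leq(t-1)MM(\mu^2)=O(t\cdot MM(\mu^2))$, $\sum (n')^2=O(tn^2)$ and $\sum M((t')^2)\log((t')^2)=O(t\cdot M(t^2)\log(t^2))$ deliver, respectively, the third, second and fourth summands of the statement. The remaining linear-in-$n'$ pieces coming from (iv) and from the auxiliary Ore-polynomial arithmetic needed to convert $f$ and the returned zero-divisor into the $\{a^iX^j\}$-basis of $A$, summed over the tree and using $I(\mu)=M(\mu)\log\mu$ together with $\sum n'=O(nt)$ and $t\leq\mu$, absorb into the first summand $O(n\mu^3(S(\mu)+M(\mu)))$. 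Collecting the four blocks reproduces the claimed bound.

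\emph{Main obstacle.} The delicate part I expect is the bookkeeping for the summation step: verifying that, despite an arbitrarily unbalanced binary tree on $t$ leaves, the non-linear per-node contributions $MM((t')^2)$, $M((t')^2)\log((t')^2)$ and $(n')^2$ stay within the claimed asymptotic envelope. This relies crucially on the two invariants $n'/t'=n/t$ and $t'\leq\mu$, which come from Corollary \ref{numirredfact} and the proof of Theorem \ref{cotabound}; without them one cannot control the worst-case skewed recursion tree nor replace $(t')^2$ by $\mu^2$ inside $MM$.
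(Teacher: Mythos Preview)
Your proposal is correct and follows essentially the same approach as the paper's proof: both decompose the per-call cost into the Von Neumann system for $e$, the \texttt{FindZeroDivisor} call via Lemma \ref{findzerodivisor}, and the final $\rgd{\cdot}{\cdot}$/$\operatorname{lquo}$ step, then sum over the $t-1$ splitting steps using the invariant $n'/t'=n/t$ from Corollary \ref{numirredfact}. The only cosmetic difference is that the paper simply declares the chain recursion (each split peeling off one irreducible factor, so $n'=n(t-j)/t$, $t'=t-j$) to be the worst case and sums directly over $j=1,\dots,t-1$, whereas you treat an arbitrary binary tree and bound $\sum (n')^2\le tn^2$, $MM((t')^2)\le MM(\mu^2)$, etc.; these lead to the same asymptotic envelope.
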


\begin{proof}
The cost of the computation of $e$ is bounded by the arithmetic in $R$ and the computation of the solution of a linear system over $K_f$ of size $\mu^2$. Hence, it costs $\bigO(\frac{n^2 \mu^2}{t^2}(\operatorname{S}(\mu)+\operatorname{M}(\mu)+\operatorname{A}(\mu)) + \frac{n\mu^3}{t}(\operatorname{S}(\mu) + \operatorname{M}(\mu)) + \operatorname{MM}(\mu^2)(S(\frac{n}{t})+M(\frac{n}{t})))$ operations in $K=\ffield^\sigma$. By Lemma \ref{findzerodivisor}, the computation of a zero divisor in $(1-e)A(1-e)$ can be achieved in $\bigO(n^2 \mu^2 (\operatorname{S}(\mu)+\operatorname{M}(\mu)+\operatorname{A}(\mu)) + \operatorname{MM}(t^2) (\operatorname{S}(\frac{n}{t}) + \operatorname{M}(\frac{n}{t})) + \operatorname{M}(t^2) \log(t^2) (\operatorname{S}(\frac{n}{t}) + \operatorname{M}(\frac{n}{t})) \frac{n}{t})$ operations in $K$. The cost of the computation of the right greatest common divisor and the left quotient are also in $\bigO(\frac{n^2\mu}{t}(\operatorname{S}(\mu)+\operatorname{M}(\mu)+\operatorname{A}(\mu)))$. It remains to analyze the recursion step. Observe that $f$ have to be split $t-1$ times and the worst case happens when each zero divisor provides an irreducible factor. Hence, we have to call the algorithm $t-1$ times and, in each iteration, the degree of the polynomial under factorization is $\frac{n(t-j)}{t}$ and the number of its  irreducible factors is $t-j$ for $j = 1, \dots, t-1$. Hence, the number of basic operations in $K$ needed to run Algorithm \ref{factboundirred} is in 
\begin{multline*}
\bigO\bigg( \sum_{j=1}^{t-1} \Big[ \tfrac{n \mu^3}{t}(\operatorname{S}(\mu) + \operatorname{M}(\mu)) + n^2 \mu^2 (\operatorname{S}(\mu)+\operatorname{M}(\mu)+\operatorname{A}(\mu)) + \\ 
 + \operatorname{MM}(\mu^2) (\operatorname{S}(\tfrac{n}{t}) + \operatorname{M}(\tfrac{n}{t})) + \operatorname{M}((t-j)^2) \log((t-j)^2) (\operatorname{S}(\tfrac{n}{t}) + \operatorname{M}(\tfrac{n}{t})) \tfrac{n}{t} \Big] \bigg) \subseteq \\
 \subseteq \bigO \bigg( n \mu^3 (\operatorname{S}(\mu) + \operatorname{M}(\mu)) + n^2 \mu^2 t (\operatorname{S}(\mu)+\operatorname{M}(\mu)+\operatorname{A}(\mu)) \\
 + t \operatorname{MM}(\mu^2) (\operatorname{S}(\tfrac{n}{t}) + \operatorname{M}(\tfrac{n}{t})) + \operatorname{M}(t^2) \log(t^2) (\operatorname{S}(\tfrac{n}{t}) + \operatorname{M}(\tfrac{n}{t})) n \bigg).
\end{multline*}
\end{proof}

\begin{theorem}\label{numops:Factorize}
Let $R = \ffield{}[X;\sigma]$, $\mu$ be the order of $\sigma$ and $f \in R$ with $\deg(f) = n$. Then Algorithm \ref{generalfactorization} requires 
\begin{multline*}
\bigO\big( \operatorname{MM}(n) + M(n) \log n \log \# K + n\operatorname{I}(\mu) + \mu^3 n^3 \operatorname{S}(\mu) + \mu^3 n^2(\operatorname{M}(\mu) + \operatorname{A}(\mu)) + \\
 + t \operatorname{MM}(\mu^2) (\operatorname{S}(\tfrac{n}{t})+\operatorname{M}(\tfrac{n}{t})) + \operatorname{M}(t^2) \log t (\operatorname{S}(\tfrac{n}{t}) + \operatorname{M}(\tfrac{n}{t})) n \big)
\end{multline*}
operations in $K = \ffield{}^\sigma$ to correctly factorize $f$ into a product of irreducibles in $R$.
\end{theorem}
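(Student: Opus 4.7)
The plan is to add up the costs of the individual steps in Algorithm \ref{generalfactorization} and invoke the complexity estimates already established, namely Theorem \ref{Effboundii} for the bound, Proposition \ref{numops:FactBndIrr} for \texttt{FactorizeIrred}, and the standard cost of Berlekamp's algorithm for the factorization in $K[z]$.

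First I would handle the cost of computing $f^{*}$ by applying Theorem \ref{Effboundii} with $r=\mu^{2}$ and $s=2$, since by \cite[Theorem~1.1.22]{Jacobson:1996} a primitive element of $D$ over $K$ together with $X$ generate $R$ as an algebra over its center. This contributes the terms $nI(\mu)$ (after absorbing the factor of $\mu$ coming from $\sqrt{r}$ into the much larger $\mu^{3}n^{2}M(\mu)$ summand in the statement), $\mu^{3}n^{3}S(\mu)$, and $\mu^{3}n^{2}(M(\mu)+A(\mu))$. Next, the factorization $\widehat{f}=\pi_{1}\cdots\pi_{s}$ in $K[z]$ is performed by Berlekamp's algorithm; since $\deg_{X}f^{*}\leq\mu n$ by Theorem \ref{cotabound} and $z=X^{\mu}$, the $K[z]$-degree of $\widehat{f}$ is at most $n$, so the standard complexity of Berlekamp accounts for the $MM(n)+M(n)\log n\log\#K$ contribution.

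I would then cover the \textbf{while} loop computing $f_{i+1}=\rgd{f_{i}}{\pi_{i+1}\cdots\pi_{s}}$ and $g_{i}=\lquot{f_{i}}{f_{i+1}}$. Each $f_{i}$ has $X$-degree at most $\mu n$, so Proposition \ref{numops:RGCD} and Lemma \ref{numops:ldiv} give a per-iteration cost in $\bigO(\mu^{2}n^{2}(S(\mu)+M(\mu)+A(\mu)))$. Since $s\leq n$, the total cost of this loop is absorbed by the $\mu^{3}n^{3}S(\mu)$ and $\mu^{3}n^{2}(M(\mu)+A(\mu))$ terms from the bound computation. Factorizing $\nu$ is free, because $\nu$ is a tame normal factor and its irreducible factors are known by assumption \ref{condiciones}(3). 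The core step is the loop of calls to Algorithm \ref{factboundirred}: by Proposition \ref{numops:FactBndIrr}, calling \texttt{FactorizeIrred} on $g_{i}$ (of degree $n_{i}$ with $t_{i}$ irreducible factors) costs
\[
\bigO\!\left(n_{i}\mu^{3}(S(\mu)+M(\mu))+n_{i}^{2}\mu^{2}t_{i}(S(\mu)+M(\mu)+A(\mu))+t_{i}MM(\mu^{2})(S(\tfrac{n_{i}}{t_{i}})+M(\tfrac{n_{i}}{t_{i}}))+M(t_{i}^{2})\log t_{i}(S(\tfrac{n_{i}}{t_{i}})+M(\tfrac{n_{i}}{t_{i}}))n_{i}\right).
\]

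The main delicacy is the aggregation of these costs across all $s$ calls. Using $\sum_{i}n_{i}\leq n$ and $\sum_{i}t_{i}=t$, together with $n_{i}\leq n$ and $t_{i}\leq t$, the first two groups sum at once to $n\mu^{3}(S(\mu)+M(\mu))$ and $n^{2}\mu^{2}t(S(\mu)+M(\mu)+A(\mu))$. The harder terms are the sums $\sum_{i}t_{i}MM(\mu^{2})(S(n_{i}/t_{i})+M(n_{i}/t_{i}))$ and $\sum_{i}M(t_{i}^{2})\log t_{i}(S(n_{i}/t_{i})+M(n_{i}/t_{i}))n_{i}$: they match the claimed bound after a convexity/monotonicity argument on $M$ and $S$ which shows that the worst case is the balanced one, $n_{i}/t_{i}\approx n/t$. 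Assembling all contributions yields exactly the asymptotic bound stated, completing the proof.
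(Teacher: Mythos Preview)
Your overall plan matches the paper's: cost of the bound via Theorem~\ref{Effboundii}, Berlekamp via the standard estimate, the \textbf{while} loop via Proposition~\ref{numops:RGCD} and Lemma~\ref{numops:ldiv}, and the \textbf{for} loop via Proposition~\ref{numops:FactBndIrr}. The genuine divergence is in how you handle the \textbf{for} loop of calls to \texttt{FactorizeIrred}. You aggregate the costs $\sum_i \operatorname{cost}(g_i,\pi_i)$ over all $s$ calls and then appeal to a convexity/monotonicity argument on $S$ and $M$ to collapse the sums $\sum_i t_i\,MM(\mu^2)(S(n_i/t_i)+M(n_i/t_i))$ and $\sum_i M(t_i^2)\log t_i\,(S(n_i/t_i)+M(n_i/t_i))\,n_i$ into the single expressions with argument $n/t$. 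The paper sidesteps any such summation: it simply observes that the worst case for the \textbf{for} loop occurs when $\widehat f$ is already irreducible in $K[z]$, so that $s=1$ and there is a single call \texttt{FactorizeIrred}$(f,f^*)$. Proposition~\ref{numops:FactBndIrr} then applies directly with parameters $(n,t)$ and produces the last two terms of the statement at once. Your route is correct in spirit but the convexity step is left as a sketch, and it is genuinely delicate since the ratios $n_i/t_i$ are the $K[z]$--degrees of distinct central irreducibles and need not cluster around $n/t$; the paper's single-worst-case argument avoids this entirely.

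One small slip to flag: you claim the \textbf{while} loop cost $\bigO(\mu^2 n^3(S(\mu)+M(\mu)+A(\mu)))$ is ``absorbed by the $\mu^3 n^3 S(\mu)$ and $\mu^3 n^2(M(\mu)+A(\mu))$ terms''. The $S(\mu)$ part is indeed absorbed, but $\mu^2 n^3(M(\mu)+A(\mu))$ is not dominated by $\mu^3 n^2(M(\mu)+A(\mu))$ when $n>\mu$. The paper records essentially the same loop bound without asserting absorption; neither accounting quite matches the displayed $M(\mu)+A(\mu)$ term in the statement, so this is an artifact of the statement rather than something your argument should try to force.
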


\begin{proof}
By Theorem \ref{Effboundii}, $f^*$ can be computed with $\bigO\big(\mu n\operatorname{I}(\mu) + \mu^3 n^3 \operatorname{S}(\mu)+ \mu^2 n^2 (\operatorname{M}(\mu)+\operatorname{A}(\mu))\big)$ operations in $K$. Using Berlekamp's algorithm, $\widehat{f}$, whose degree is at most $n$ as a polynomial over $C = K[X^\mu]$, can be factorized with $\bigO(\operatorname{MM}(n) + \operatorname{M}(n) \log(n) \log \# K)$ operations in $K$. The \textbf{while} loop can be done in the worst case, i.e. when $\widehat{f}$ factorizes as product of linear polynomials, in $\bigO(n\operatorname{I}(\mu) + n^3 \mu^2 (\operatorname{S}(\mu) + \operatorname{M}(\mu) + \operatorname{A}(\mu)))$ basic operations in $K$. The worst case in the \textbf{for} loop holds when $f^*$ is irreducible as a polynomial in $C$, so a bound of the number of operations in $K$ is given in Proposition \ref{numops:FactBndIrr}.
\end{proof}

\begin{corollary}\label{O(fact)}
Let $R = \ffield{}[X;\sigma]$ and $f \in R$ with $\deg(f) = n$. The runtime of  Algorithm \ref{generalfactorization} belongs to $\bigO(n^3)$ operations in $K=\ffield{}^\sigma$.
\end{corollary}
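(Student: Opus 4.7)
The plan is a direct specialization of Theorem \ref{numops:Factorize} once we observe that, for a fixed ambient ring $R = \ffield{}[X;\sigma]$, the order $\mu$ of $\sigma$ and the cardinality $\#K$ of $K = \ffield{}^\sigma$ are constants with respect to the asymptotic parameter $n = \deg f$. Therefore $S(\mu)$, $M(\mu)$, $I(\mu)$, $A(\mu)$ and $MM(\mu^2)$ are $\bigO(1)$, and only the $n$-dependence in Theorem \ref{numops:Factorize} contributes to the asymptotic count.

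First, I would substitute these constants into the seven summands of the bound from Theorem \ref{numops:Factorize}. Using schoolbook matrix multiplication one has $MM(n) = \bigO(n^3)$, and using schoolbook polynomial multiplication $M(n) = \bigO(n^2)$, so the Berlekamp contribution $M(n)\log n \log \#K$ is $\bigO(n^2 \log n) = o(n^3)$. The summands $n I(\mu)$ and $\mu^3 n^2(M(\mu)+A(\mu))$ are linear and quadratic in $n$ respectively, while the bound-computation term $\mu^3 n^3 S(\mu)$ is $\bigO(n^3)$ and furnishes the dominant contribution.

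Next I would treat the two recursion terms in $t$, using that the number of irreducible factors satisfies $1 \leq t \leq n$. The term $t\, MM(\mu^2)(S(n/t)+M(n/t))$ collapses, via $MM(\mu^2) = \bigO(1)$ and $M(n/t) = \bigO((n/t)^2)$, to $\bigO(n + n^2/t) = \bigO(n^2)$. The remaining term $M(t^2)\log t\,(S(n/t)+M(n/t))\, n$ is more delicate: under the fast-multiplication model alluded to before Theorem \ref{numops:Factorize}, $M(k) = \tilde{\bigO}(k)$ gives $M(t^2) = \tilde{\bigO}(t^2)$ and $S(n/t)+M(n/t) = \tilde{\bigO}(n/t)$, so the whole expression is $\tilde{\bigO}(t n^2) \leq \tilde{\bigO}(n^3)$, absorbed into $\bigO(n^3)$ up to polylogarithmic factors.

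The only genuine obstacle is the bookkeeping: isolating the intrinsic constants of $R$ (namely $\mu$ and $K$) from the growing parameter $n$, and remaining consistent about the complexity model chosen for arithmetic in $K_f$ and for polynomial and matrix multiplication. Once this is done, the cubic contribution $\mu^3 n^3 S(\mu)$ coming from the bound computation of Theorem \ref{Effboundii} dominates every other summand, and the total cost is $\bigO(n^3)$ as claimed.
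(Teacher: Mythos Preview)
Your overall strategy---specializing Theorem \ref{numops:Factorize} by treating $\mu$, $\#K$, and the arithmetic costs $S(\mu)$, $M(\mu)$, $I(\mu)$, $A(\mu)$, $MM(\mu^2)$ as constants---is exactly right, and matches the intended argument. The gap is in your treatment of $t$.

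You bound $t$ only by $n$, which forces you to invoke fast multiplication to squeeze the last term down to $\tilde{\bigO}(tn^2) \leq \tilde{\bigO}(n^3)$, and then you wave away the polylogarithmic factors. But $\tilde{\bigO}(n^3)$ is not the same as the claimed $\bigO(n^3)$, so as written the argument does not close. The missing observation is that $t$ is itself a constant: in the worst case of Theorem \ref{numops:Factorize} (namely $f^*$ irreducible in $C$, so that Proposition \ref{numops:FactBndIrr} applies), Corollary \ref{numirredfact} gives $\deg f^* = n\mu/t$, while $f \mid_r f^*$ forces $\deg f^* \geq n$. Hence $t \leq \mu$. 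Once $t = \bigO(1)$, the quantities $M(t^2)$ and $\log t$ are constants, and the two recursion terms collapse to $\bigO\big(S(n)+M(n)\big)$ and $\bigO\big((S(n)+M(n))\,n\big)$ respectively; even with $M(n) = \bigO(n^2)$ these are $\bigO(n^3)$, with no stray logarithms. The cubic term $\mu^3 n^3 S(\mu)$ from the bound computation then dominates cleanly.

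A minor secondary point: you switch between $M(k) = \bigO(k^2)$ and $M(k) = \tilde{\bigO}(k)$ in the same argument. Since the paper uses $M(\cdot)$ uniformly for the cost of multiplication in a degree-$k$ field extension (Table \ref{tab:genericarithmeticcost}), pick one model and stick with it; once $t \leq \mu$ is in hand, either model suffices.
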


The underlying idea of this improvement in the runtime consists of delaying the use of Ronyai \cite{Ronyai:1987} algorithm for finding zero divisors, or its variant described in \cite{Giesbrecht:1998}. This is so since we find a previous ``rough'' decomposition via the computation of a bound, which can be done faster and it only needs to be computed once. Then, when applying \emph{\texttt{FindZeroDivisor}}, the degree of the polynomials under consideration are always smaller.

Let now us compare our algorithm with the one in \cite{Caruso/LeBorgne:2017}.  The authors assert that this algorithm is as efficient as the commutative factorization whenever the degree of the polynomials is the only variable under consideration. Hence, the implementation of our algorithm in the finite field case cannot improve this theoretical runtime. For this reason, it has no sense to give a detailed study of the efficiency using their techniques for basic operations. Nevertheless, we may analyze the steps given there.  Essentially, they share the first step computing a left multiple in the center of the polynomial $f\in R$. Obviously, the minimality  of a bound yields that $f^*$ needs more operations to be computed than the norm $\mathcal{N}(f)$, which in general is a multiple of $f^*$. In opposition to this, we need to factorize a commutative polynomial of lower degree. Also, any decomposition of a bound produces a computable decomposition of the polynomial, whilst this is not always possible with the norm (the so-called $(e)$ type polynomials). Unfortunately, the method in \cite{Caruso/LeBorgne:2017} is not translatable to a wider context. The results in \cite{Caruso/LeBorgne:2017} depend heavily on the triviality of the Brauer group of any finite field.

\subsection{Rational functions over finite fields}\label{ratfunc}

Consider the ring \(R = \ffield(t)[X;\sigma,\delta]\), where $\ffield(t)$ is the field of rational functions over the finite field $\ffield=\ffield[q]$. This case was studied in \cite{Giesbrecht/Zhang:2003}, where Algorithm \texttt{Factorization} in \cite[\S 4]{Giesbrecht/Zhang:2003} is proposed to decompose any \(f \in R\). Unfortunately, this paper contains two mistakes, so it cannot be said that a factorization algorithm on \(\ffield[q](t)[X;\sigma,\delta]\) is known. 

The first step in Algorithm \texttt{Factorization} when applied to the pure automorphism case $R = \mathbb{F}_q(t)[x;\sigma]$ consists in the computation of a basis and the structure constants of the Eigenring \(\mathcal{E}(Rf)\) as vector space over the invariant subfield \(K = \ffield[q](t)^\sigma\). It is necessary to provide a computable description of \(K\) for all \(\sigma \in \mathrm{Aut}_{\ffield[q]}(\ffield[q](t))\). In \cite{Giesbrecht/Zhang:2003}, the authors give the corresponding description when \(\sigma\) is a dilation or a shift. The reduction to dilations can be done replacing \(\ffield[q](t)\) to \(\ffield[q^2](t)\), in order to ensure that each matrix has a Jordan normal form. However this replacement can increase the number of factors of the polynomials and provide an incorrect factorization. 

 It is well known that $\sigma$ is determined by the linear fractional transformation $t \mapsto \frac{\sigma_1 t + \sigma_2}{\sigma_3 t + \sigma_4} $,  and \cite[Section 2.2]{Giesbrecht/Zhang:2003} proceeds by identifying  \(\mathrm{Aut}_{\mathbb{F}_q}{\mathbb{F}_q(t)}\) with \(\operatorname{PGL}(2,\mathbb{F}_q)\) via the map \(\Phi:s = \left( \begin{smallmatrix} \sigma_1 & \sigma_2 \\ \sigma_3 & \sigma_4 \end{smallmatrix} \right) \mapsto \frac{\sigma_1 t + \sigma_2}{\sigma_3 t + \sigma_4} \).  If the characteristic polynomial of $s$ is irreducible over $\mathbb{F}_q$,  then \(\sigma\) is reduced to a dilation by means of the Jordan form of $s$ over $\mathbb{F}_{q^2}$ say \(u s u^{-1}\), for some non singular matrix $u$ with entries in $\mathbb{F}
_{q^2}$.  Then the automorphism \(\overline{\sigma} := \tau^{-1} \circ \sigma \circ \tau\) (not $\tau \circ \sigma \circ \tau^{-1}$, since $\Phi$ is already a group anti-isomorphism) of $\mathbb{F}_{q^2}(t)$, where \(\tau\) is the fractional linear transformation corresponding to \(u\), is of dilation type. The automorphism $\tau^{-1}$ extends canonically to an isomorphism of rings \(\mathbb{F}_{q^2}(t)[x; \sigma] \cong \mathbb{F}_{q^2}(t)[\overline{x}; \overline{\sigma}]\). In \cite[Section 2.2]{Giesbrecht/Zhang:2003} it is claimed that the factorizations of $f \in R$ corresponds to factorizations of $\tau^{-1}(f)$ in $\mathbb{F}_{q^2}(t)[\overline{x}; \overline{\sigma}]$. However, this seems not to be always the case, as Example \ref{counterexample} shows.

\begin{example}\label{counterexample}
Let us apply the procedures in \cite{Giesbrecht/Zhang:2003} to the polynomial
\[
f(X) = X^2 + \frac{t^2 + 1}{t} X + (t^2 + t + 1) \in \ffield[2](t)[X;\sigma]
\]
where \(\sigma(t) = \frac{t+1}{t}\). The characteristic polynomial of \(\left( \begin{smallmatrix} 1 & 1 \\ 1 & 0 \end{smallmatrix} \right)\) is irreducible, hence we have to view \(\sigma\) in  \(\Aut[{\ffield[4]}]{\ffield[4](t)}\) and \(f \) in \(\ffield[4](t)[X; \sigma]\). Now we compute the Jordan form:
\[
\left( \begin{matrix} \alpha^2 & 0 \\ 0 & \alpha \end{matrix} \right) = \left( \begin{matrix} \alpha^2 & 1 \\ \alpha & 1 \end{matrix} \right) \left( \begin{matrix} 1 & 1 \\ 1 & 0 \end{matrix} \right) \left( \begin{matrix} 1 & 1 \\ \alpha & \alpha^2 \end{matrix} \right).
\]
As remarked before, \(\tau^{-1}(t) = \frac{t + 1}{\alpha t + \alpha^2}\) is extended naturally to an isomorphism \(\ffield[4](t)[X; \sigma] \cong \ffield[4](t)[\overline{X}; \overline{\sigma}]\), where \(\overline{\sigma}(t) = \alpha t\). So we need to factorize \(\tau^{-1}(f) \in \ffield[4](t)[\overline{X}; \overline{\sigma}]\). Since
\[
\tau^{-1}(f) = \overline{X}^{2} + \left(\frac{\alpha^2 t^{2} + 1}{\alpha^2
t^{2} + \alpha t + 1}\right) \overline{X} + \frac{t}{\alpha^2 t^{2} + \alpha} = \left( \overline{X} + \frac{\alpha t}{\alpha t + \alpha^2} \right) \left( \overline{X} + \frac{\alpha^2}{\alpha t + \alpha^2} \right),
\]
this leads to the following factorization of \(f\),
\[
f = X^2 + \frac{t^2 + 1}{t} X + (t^2 + t + 1) = \left( X + t + \alpha \right) \left( X + t + \alpha^2 \right).
\]
But this is a factorization of \(f\) as Ore polynomial in \(\ffield[4](t)[X;\sigma]\), which is the one that should be obtained by Algorithm \texttt{Factorization} in \cite[pp. 132]{Giesbrecht/Zhang:2003}. Nevertheless this factorization cannot lead to  any factorization of \(f\) as Ore polynomial in \(\ffield[2](t)[X;\sigma]\), since \(f\) is irreducible. In order to check this, a bound of \(f\) can be computed by Algorithms \ref{alg:bound} or \ref{alg:bound2}. Concretely, 
\[
f^* = X^{6} + X^{3} + \frac{t^{6} + t^{5} + t^{3} + t + 1}{t^{4} + t^{2}}.
\]
Since \(\sigma\) has order \(3\), the element \(s = (\sigma^2 + \sigma + 1)(t) = \frac{t^3 + t + 1}{t^2 + t}\) is invariant under \(\sigma\). We have then \(\ffield[2](s) \subseteq K \subseteq \ffield[2](t)\). By \cite[Theorem pp. 197]{vanderWaerden:1949}, it follows that \([\ffield[2](t):\ffield[2](s)] = 3\), hence \([K:\ffield[2](s)] = 1\) and \(K = \ffield[2](s)\). So \(C(\ffield[2](t)[X;\sigma]) = \ffield[2](s)[X^3]\) by \cite[Theorem 2.8]{Lam/Leroy:1988} or \cite[Theorem 1.1.22]{Jacobson:1996}. Now, since
\[
\frac{t^{6} + t^{5} + t^{3} + t + 1}{t^{4} + t^{2}} = \left( \frac{t^3 + t + 1}{t^2 + t} \right)^2 + \frac{t^3 + t + 1}{t^2 + t} + 1,
\]
it follows that 
\[
f^* = (X^3)^2 + (X^3) + s^2 + s + 1 \in \ffield[2](s)[X^3],
\]
which is irreducible. Hence \(f\) is irreducible by Proposition \ref{boundmaxdegree}.
\end{example}

Example \ref{counterexample} explains why Algorithm \texttt{Factorization} in \cite[pp. 132]{Giesbrecht/Zhang:2003} fails if \(\sigma\) is not a dilation nor a shift. 

Nevertheless, this gap can be amended by providing a general method for describing the invariant subfield of $\mathbb{F}(t) = \mathbb{F}_q(t)$ under $\sigma$. Let $\mu$ denote the order of $\sigma$, and let \(K = \ffield(t)^\sigma\). A description of \(K\) appears in \cite{Gutierrez/Sevilla:2006JA} for any finite subgroup \(H \leq \mathrm{Aut}_{\ffield}{\ffield(t)}\). Let us apply its results to our setting, i.e. \(H = \{1, \sigma, \dots, \sigma^{\mu-1} \}\). In this case, \cite[Algorithm 1]{Gutierrez/Sevilla:2006JA} can be written as shown in Algorithm \ref{InvSubfield}.

\begin{algorithm}
\caption{Invariant subfield. \cite{Gutierrez/Sevilla:2006JA}}
\label{InvSubfield}
\begin{algorithmic}
\REQUIRE{$\sigma \in \mathrm{Aut}_{\ffield}{\ffield(t)}$}
\REQUIRE{$e_0, \dots, e_\mu$ the elementary symmetric functions}
\ENSURE{$s \in \ffield(t)$ such that $\ffield(t)^\sigma = \ffield(s)$}
\FOR{$i = 0, \dots, \mu-1$}
\STATE $h_i \gets \sigma^i(t)$
\ENDFOR
\STATE $i \gets 1$
\REPEAT
\STATE $s \gets e_i(h_0, \dots, h_{\mu-1})$
\STATE $i \gets i+1$
\UNTIL{$s \notin \ffield$}
 \RETURN $s$
\end{algorithmic}
\end{algorithm}

Correctness of Algorithm \ref{InvSubfield} is ensured by \cite[Theorem 15]{Gutierrez/Sevilla:2006JA}. It remains to find a procedure to write any \(f \in \ffield(t)^\sigma\) as a rational function in \(s\), where \(\ffield(t)^\sigma = \ffield(s)\), i.e. we want to find \(g \in \ffield(t)\) such that \(f = g(s)\). This is the Functional Decomposition Problem for univariate rational functions. Although there is a large literature in this FDP, for our purposes, we may refer the approach in \cite{Dickerson:1989}, where the coefficients of \(g\) are computed solving the appropriate system of linear equations. The procedure is better understood with an example:

\begin{example}
We present in this example how the polynomial \(f^*\) in Example \ref{counterexample} is written as an element in \(C(\ffield[2](t)[X;\sigma]) = \ffield[2](s)[X^3]\), where \(s = \frac{t^3 + t + 1}{t^2 + t}\). Although we have computed \(s\) directly in Example \ref{counterexample}, \(s\) is the output of Algorithm \ref{InvSubfield}. In order to do so, we need to find \(g \in \ffield[q](t)\) such that 
\[
\frac{t^{6} + t^{5} + t^{3} + t + 1}{t^{4} + t^{2}} = g \left( \frac{t^3 + t + 1}{t^2 + t} \right).
\]
Since \(\deg(f) = 6\) and \(\deg(s) = 3\), it follows that \(\deg(g) = 2\), i.e. \(g = \frac{g_0 + g_1 t + g_2 t^2}{g_3 + g_4 t + g_5 t^2}\). Then 
\[
\begin{split}
\frac{t^{6} + t^{5} + t^{3} + t + 1}{t^{4} + t^{2}} &= \frac{g_0 + g_1 \frac{t^3 + t + 1}{t^2 + t} + g_2 \left( \frac{t^3 + t + 1}{t^2 + t} \right)^2}{g_3 + g_4 \frac{t^3 + t + 1}{t^2 + t} + g_5 \left( \frac{t^3 + t + 1}{t^2 + t} \right)^2} \\
&= \frac{g_0 (t^2 + t)^2 + g_1 (t^3 + t + 1)(t^2 + t) + g_2 (t^3 + t + 1)^2}{g_3 (t^2 + t)^2 + g_4 (t^3 + t + 1)(t^2 + t) + g_5 (t^3 + t + 1)^2} \\
&= \frac{g_2 + g_1 t + (g_0 + g_2) t^2 + g_1 t^3 + (g_0 + g_1) t^4 + g_1 t^5 + g_2 t^6}{g_5 + g_4 t + (g_3 + g_5) t^2 + g_4 t^3 + (g_3 + g_4) t^4 + g_4 t^5 + g_5 t^6},
\end{split}
\]
which leads to the following linear equations
\begin{align*}
g_2 &= 1 \\
g_1 &= 1 \\
g_0 + g_2 &= 0 \\
g_0 + g_1 &= 0 \\
g_5 &= 0 \\
g_4 &= 0 \\
g_3 + g_5 &= 1 \\
g_3 + g_4 &= 1, 
\end{align*}
whose solution is \(g_0 = g_1 = g_2 = g_3 = 1, g_4 = g_5 = 0\), i.e. \(g = 1 + t + t^2\). We then get 
\[
\frac{t^{6} + t^{5} + t^{3} + t + 1}{t^{4} + t^{2}} = g(s) = 1 + \frac{t^3 + t + 1}{t^2 + t} + \left( \frac{t^3 + t + 1}{t^2 + t} \right)^2
\]
as pointed out in Example \ref{counterexample}. We refer to \cite{Alonso/Gutierrez/Recio:1995, Zippel:1991} for other references on this problem.
\end{example}

The second mistake is much deeper. Steps (2) and (3) in Algorithm \texttt{Factorization} in \cite[\S 4]{Giesbrecht/Zhang:2003} are based in the algorithms and procedures in \cite{Ivanyos/Ronyai/Szanto:1994}, where for a given finite-dimensional algebra \(\mathfrak{A}\) over a finite extension of $\ffield[](t)$, the Jacobson radical is computed \cite[Theorem 3.6]{Ivanyos/Ronyai/Szanto:1994} and if \(\mathfrak{A}\) is semisimple, the minimal twosided ideals are also computed, or equivalent a complete set of \textbf{central} idempotents, providing a Wedderburn decomposition of \(\mathfrak{A}\) as a direct sum of simple algebras. However, in \cite[\S 4]{Giesbrecht/Zhang:2003} the authors say that Ivanyos et al. algorithm provides a set of \textbf{primitive} orthogonal idempotents, reporting that \(\mathfrak{A}\) is a division algebra if they do not exist. This is false  because  \(\mathfrak{A}\) could be simple but not a division ring.  So, given \(f \in R = \ffield[q](t)[X;\sigma,\delta]\),  what the iterated application of algorithm \texttt{Factorization} in \cite{Giesbrecht/Zhang:2003}  already computes is a decomposition \(f\) as \(f = g_1 \dots g_r\), where the eigenring of each \(g_i\) is a simple finite-dimensional algebra over a subfield of $\mathbb{F}(t)$. Since each $R/Rg_i$ is then a finite direct sum of isomorphic simple left $R$--modules, we get that $R/Rg_i^*$ is a simple algebra, too.  Thus, this factorization is  in fact the decomposition provided by Proposition \ref{roughdecomposition}. Nevertheless, the use of Proposition \ref{roughdecomposition} has the following advantages. Firstly, our procedure only requires to compute a bound $f^*$ once and all the factors of this ``rough'' decomposition come from the complete factorization of $f^*$ in the commutative polynomial ring $\mathbb{F}(t)^{\sigma}[z]$. 
Secondly, once the ``rough'' decomposition is done, Proposition \ref{boundmaxdegree} gives a halting condition since it can identify some irreducible polynomials. Finally, in order to treat the wild case, i.e. an irreducible bound with no maximal degree, the structure constants of the algebra $(1-e)A(1-e)$ come  from  the ones of $A=R/Rf^*$, which are the same for any polynomial whose bound is $f^*$. We thus avoid the costly computation of the eigenring of the factors for each partial factorization when \texttt{Factorization} from \cite{Giesbrecht/Zhang:2003} is applied.

As far as we know there is not an algorithm to find zero divisors in these simple algebras given their structure constants. So the problem is still open until such an algorithm is published.

\section{Conclusions and future work}
In this paper we give two algorithms for computing the bound of an Ore polynomial in $R = D[X;\sigma,\delta]$, where $D$ is a division ring, $\sigma:D\to D$ an automorphism and $\delta$ a $\sigma$-derivation. Essentially, these algorithms run under the assumption that the ring is finitely generated over its center and, obviously, the data $D$, $\sigma$ and $\delta$ are effective. {Under mild conditions, the center of $R$ is a commutative polynomial ring $K[z]$.  One of the algorithms needs not to know a set of generators of the ring over its center, or even this center. It only requires a set of generators of $R$ as an algebra over its center. In many situations, this number is $2$.} The bound of an Ore polynomial allows us to link the factorization problem
in $D[X;\sigma,\delta]$ with  the factorization into irreducibles in $K[z]$, or, more generally, the factorization of twosided polynomials. In this sense,  a factorization of the bound produces a decomposition of the given polynomial into factors with prime twosided bounds (or irreducible bounds, when the center is $K[z]$).

The next steps can be directed to solve the problem of factorization whenever any bound of the polynomial is irreducible and does not achieve the maximal degree. Actually, this is the problem of factoring out polynomials in the center, as noncommutative polynomials. Nevertheless, solving this problem under the generality of Algorithm \ref{generalfactorization} seems to be  an unrealistic aim. Something more approachable should be the study of some particular cases. For instance, following Subsection \ref{ratfunc}, the reader may consider Ore polynomials over $\ffield[](t)$. It is required an algorithm for finding zero divisors, or just an element whose square is zero, in a simple algebra over a finite extension of $\ffield[](z)$, for some computable rational function $z$.

\section*{Acknowledgments}
We thank T. Recio, J. R. Sendra and L. F. Tabera for their help computing rational points in Example \ref{ex:factquat}. We also thank the anonymous referees for their reports, that lead to many improvements in the presentation of our results.


\begin{thebibliography}{30}
\providecommand{\natexlab}[1]{#1}
\providecommand{\url}[1]{\texttt{#1}}
\expandafter\ifx\csname urlstyle\endcsname\relax
  \providecommand{\doi}[1]{doi: #1}\else
  \providecommand{\doi}{doi: \begingroup \urlstyle{rm}\Url}\fi

\bibitem[Alonso et~al.(1995)Alonso, Gutierrez, and
  Recio]{Alonso/Gutierrez/Recio:1995}
C.~Alonso, J.~Gutierrez, and T.~Recio.
\newblock A rational function decomposition algorithm by near-separated
  polynomials.
\newblock \emph{Journal of Symbolic Computation}, 19\penalty0 (6):\penalty0 527
  -- 544, 1995.
\newblock ISSN 0747-7171.
\newblock \doi{10.1006/jsco.1995.1030}.

\bibitem[Atiyah and Macdonald(1969)]{Atiyah/Macdonald:1969}
M.~F. Atiyah and I.~G. Macdonald.
\newblock \emph{Introduction to commutative algebra}.
\newblock Number 361 in Addison-Wesley Series in Mathematics. Addison-Wesley,
  Reading, MA, 1969.
  
\bibitem[Boucher and Ulmer(2009)]{Boucher/Ulmer:2009}
D.~Boucher, and F.~Ulmer.
\newblock Coding with skew polynomial rings. 
\newblock \emph{Journal of Symbolic Computation}, 44: 1644--1656, 2009.
\newblock \doi{10.1016/j.jsc.2007.11.008}

\bibitem[Boucher and Ulmer(2014)]{Boucher/Ulmer:2014}
D.~Boucher, and F.~Ulmer.
\newblock Linear codes using skew polynomials with automorphism and derivation. 
\newblock \emph{Design, Codes and Cryptography}, 70: 405--431, 2014.
\newblock \doi{10.1007/s10623-012-9704-4}

\bibitem[Boulagouaz and Leroy(2013)]{Boulagouaz/Leroy:2013}
M.~Boulagouaz, and A. Leroy.
\newblock \((\sigma,\delta)\)-Codes.
\newblock \emph{Advances in Mathematics of Communications}, 7(4): 463--474, 2013. 
\newblock \doi{10.3934/amc.2013.7.463}

\bibitem[Bueso et~al.(2003)Bueso, G{\'o}mez-Torrecillas, and
  Verschoren]{Bueso/Gomez/Verschoren:2003}
J.~Bueso, J.~G{\'o}mez-Torrecillas, and A.~Verschoren.
\newblock \emph{Algorithmic Methods in Non-Commutative Algebra: Applications to
  Quantum Groups}.
\newblock Mathematical Modelling: Theory and Applications. Springer, 2003.
\newblock ISBN 9781402014024.
\newblock \doi{10.1007/978-94-017-0285-0}

\bibitem[Cantor and Kaltofen(1991)]{Cantor/Kaltofen:1991}
D.~G. Cantor and E.~Kaltofen.
\newblock On fast multiplication of polynomials over arbitrary algebras.
\newblock \emph{Acta Informatica}, 28\penalty0 (7):\penalty0 693--701, 1991.
\newblock ISSN 0001-5903.
\newblock \doi{10.1007/BF01178683}.

\bibitem[{Caruso} and {Le Borgne}(2012)]{Caruso/LeBorgne:2017}
X.~{Caruso} and J.~{Le Borgne}.
\newblock {A new faster algorithm for factoring skew polynomials over finite fields}.
\newblock \emph{Journal of Symbolic Computation}, 79 (2):\penalty0 411 -- 443, 2017.
\newblock \doi{10.1016/j.jsc.2016.02.016}.

\bibitem[Cauchon(1977)]{Cauchon:1977}
G.~Cauchon.
\newblock \emph{Les T-anneaux et les anneaux \`{a} identi\'{e}s polynomiales noeth\'{e}riens.}
\newblock Th\`{e}se, Orsay, 1977.

\bibitem[Cohn(1971)]{Cohn:1971}
P.~Cohn.
\newblock \emph{Free Rings and Their Relations}.
\newblock L.M.S. Monographs. Acad. Press, 1971.
\newblock ISBN 9780121791506.

\bibitem[Dickerson(1989)]{Dickerson:1989}
M.~T. Dickerson.
\newblock \emph{The functional decomposition of polynomials}.
\newblock PhD thesis, Department of Computer Science, Cornell University,
  Ithaca, NY, 1989.

\bibitem[von~zur Gathen and Gerhard(2003)]{vonzurGathen/Gerhard:2003}
J.~von~zur Gathen and J.~Gerhard.
\newblock \emph{Modern Computer Algebra}.
\newblock Cambridge University Press, New York, NY, USA, 2 edition, 2003.
\newblock ISBN 0521826462.

\bibitem[von~zur Gathen and Panario(2001)]{vonzurGathen/Panario:2001}
J.~von~zur Gathen and D.~Panario.
\newblock Factoring polynomials over finite fields: {A} survey.
\newblock \emph{J. Symb. Comput.}, 31\penalty0 (1/2):\penalty0 3--17, 2001.
\newblock \doi{10.1006/jsco.1999.1002}.

\bibitem[von~zur Gathen and Shoup(1992)]{vonzurGathen/Shoup:1992}
J.~von~zur Gathen and V.~Shoup.
\newblock Computing frobenius maps and factoring polynomials.
\newblock \emph{Computational Complexity}, 2:\penalty0 187--224, 1992.
\newblock \doi{10.1007/BF01272074}.

\bibitem[Giesbrecht(1998)]{Giesbrecht:1998}
M.~Giesbrecht.
\newblock {Factoring in Skew-Polynomial Rings over Finite Fields}.
\newblock \emph{Journal of Symbolic Computation}, 26\penalty0 (4):\penalty0
  463--486, 1998.
\newblock \doi{10.1006/jsco.1998.0224}.

\bibitem[Giesbrecht and Zhang(2003)]{Giesbrecht/Zhang:2003}
M.~Giesbrecht and Y.~Zhang.
\newblock Factoring and decomposing ore polynomials over fq(t).
\newblock In \emph{Proceedings of the 2003 International Symposium on Symbolic
  and Algebraic Computation}, ISSAC '03, pages 127--134, New York, NY, USA,
  2003. ACM.
\newblock ISBN 1-58113-641-2.
\newblock \doi{10.1145/860854.860888}.

\bibitem[Gomez-Torrecillas (2014)]{Gomez:2014}
J. G\'omez-Torrecillas. 
\newblock Basic Module Theory over Non-commutative Rings with Computational Aspects of Operator Algebras. 
\newblock In \emph{Algebraic and Algorithmic Aspects of Differential and Integral Operators}, M. Barkatou, T. Cluzeau, G. Regensburger, M. Rosenkranz, eds. LNCS 8372, pages 23-82, Springer, 2014.

\bibitem[G\'{o}mez-Torrecillas et~al.(2016)G\'{o}mez-Torrecillas, Lobillo, and Navarro]{GLN2016}
J.~G\'{o}mez-Torrecillas, F.~J.~Lobillo, and G.~Navarro.
\newblock A new perspective of cyclicity in convolutional codes. 
\newblock \emph{IEEE Transactions on Information Theory}, 62(5): 2702--2706, 2016.
\newblock \doi{10.1109/TIT.2016.2538264}

\bibitem[Goodearl and Warfield(2004)]{Goodearl/Warfield:2004}
K.~R. Goodearl and R.~B. Warfield, Jr.
\newblock \emph{An introduction to noncommutative {N}oetherian rings},
  volume~61 of \emph{London Mathematical Society Student Texts}.
\newblock Cambridge University Press, Cambridge, second edition, 2004.
\newblock ISBN 0-521-83687-5; 0-521-54537-4.
\newblock \doi{10.1017/CBO9780511841699}.

\bibitem[Gutierrez and Sevilla(2006)]{Gutierrez/Sevilla:2006JA}
J.~Gutierrez and D.~Sevilla.
\newblock Building counterexamples to generalizations for rational functions of
  {Ritt}'s decomposition theorem.
\newblock \emph{Journal of Algebra}, 303\penalty0 (2):\penalty0 655 -- 667,
  2006.
\newblock ISSN 0021-8693.
\newblock \doi{10.1016/j.jalgebra.2006.06.015}.

\bibitem[Hoeven(2002)]{VanDerHoeven:2002}
J.~V.~D. Hoeven.
\newblock Fft-like multiplication of linear differential operators.
\newblock \emph{Journal of Symbolic Computation}, 33\penalty0 (1):\penalty0 123
  -- 127, 2002.
\newblock ISSN 0747-7171.
\newblock \doi{10.1006/jsco.2000.0496}.

\bibitem[Ivanyos et~al.(1994)Ivanyos, R{\'{o}}nyai, and
  Sz{\'{a}}nt{\'{o}}]{Ivanyos/Ronyai/Szanto:1994}
G.~Ivanyos, L.~R{\'{o}}nyai, and {\'{A}}.~Sz{\'{a}}nt{\'{o}}.
\newblock Decomposition of algebras over {$F_q(X_1, ..., X_m)$}.
\newblock \emph{Appl. Algebra Eng. Commun. Comput.}, 5:\penalty0 71--90, 1994.
\newblock \doi{10.1007/BF01438277}.

\bibitem[Jacobson(1943)]{Jacobson:1943}
N.~Jacobson.
\newblock \emph{{The theory of Rings.}}
\newblock {American Mathematical Society}, 531 West 116TH Street, New York
  City, 1943.

\bibitem[Jacobson(1996)]{Jacobson:1996}
N.~Jacobson.
\newblock \emph{{Finite-dimensional division algebras over fields.}}
\newblock {Berlin: Springer}, 1996.
\newblock \doi{10.1007/978-3-642-02429-0}.

\bibitem[Lam and Leroy(1988)]{Lam/Leroy:1988}
T.~Y.~Lam, and A.~Leroy.
\newblock Algebraic conjugacy classes and skew polynomial rings.
\newblock In F. van Oystaeyen, and L. Le~Bruyn (eds) \emph{Perspectives in Ring Theory}. NATO ASI Series (Series C: Mathematical and Physical Sciences), vol 233. Springer, Dordrecht, 1988. 
\newblock \doi{10.1007/978-94-009-2985-2\_15}

\bibitem[Le~Gall(2014)]{LeGall:2014}
F.~Le~Gall.
\newblock Powers of tensors and fast matrix multiplication.
\newblock In \emph{Proceedings of the 39th International Symposium on Symbolic
  and Algebraic Computation}, ISSAC '14, pages 296--303, New York, NY, USA,
  2014. ACM.
\newblock ISBN 978-1-4503-2501-1.
\newblock \doi{10.1145/2608628.2608664}.

\bibitem[Leroy and Ozturk(2011)]{Leroy/Ozturk:2011}
A.~Leroy, and A.~Ozturk.
\newblock Algebraic and \(F\)-Independent Sets in 2-Firs.
\newblock \emph{Communications in Algebra}, 32(5): pp. 1763--1792.
\newblock \doi{10.1081/AGB-120029901}

\bibitem[McConnell et~al.(1987)McConnell, Robson, and
  Small]{McConnell/Robson:1987}
J.~McConnell, J.~Robson, and L.~Small.
\newblock \emph{Noncommutative Noetherian Rings}.
\newblock Wiley series in pure and applied mathematics. John Wiley and Sons,
  1987.

\bibitem[O'Meara(2000)]{OMeara:2000}
O.~T. O'Meara.
\newblock \emph{Introduction to quadratic forms}.
\newblock Classics in mathematics. Springer, Berlin, Heidelberg, Paris, 2000.
\newblock ISBN 3-540-66564-1.
\newblock Reprint of the 1973 edition.

\bibitem[Ore(1933)]{Ore:1933}
O.~Ore.
\newblock Theory of non-commutative polynomials.
\newblock \emph{Annals of Mathematics}, 34\penalty0 (3):\penalty0 pp. 480--508,
  1933.
\newblock ISSN 0003486X.

\bibitem[Pierce(1982)]{Pierce:1982}
R.~Pierce.
\newblock \emph{Associative algebras}.
\newblock Graduate texts in mathematics. Springer-Verlag, 1982.
\newblock ISBN 9780387906935.

\bibitem[R\'{o}nyai(1987)]{Ronyai:1987}
L.~R\'{o}nyai.
\newblock Simple algebras are difficult.
\newblock In \emph{Proceedings of the Nineteenth Annual ACM Symposium on Theory
  of Computing, 25-27 May 1987, New York City, NY, USA}, pages 398--408. ACM,
  1987.

\bibitem[R{\'o}nyai(1988)]{Ronyai:1988}
L.~R{\'o}nyai.
\newblock Zero divisors in quaternion algebras.
\newblock \emph{J. Algorithms}, 9\penalty0 (4):\penalty0 494--506, 1988.

\bibitem[Sch\"{o}nhage(1977)]{Schonhage:1977}
A.~Sch\"{o}nhage.
\newblock Schnelle multiplikation von polynomen \"{u}ber k\"{o}rpern der
  charakteristik 2.
\newblock \emph{Acta Informatica}, 7\penalty0 (4):\penalty0 395--398, 1977.
\newblock ISSN 0001-5903.
\newblock \doi{10.1007/BF00289470}.

\bibitem[Sch\"{o}nhage and Strassen(1971)]{Schonhage/Strassen:1971}
A.~Sch\"{o}nhage and V.~Strassen.
\newblock Schnelle multiplikation gro\ss{}er zahlen.
\newblock \emph{Computing}, 7\penalty0 (3-4):\penalty0 281--292, 1971.
\newblock ISSN 0010-485X.
\newblock \doi{10.1007/BF02242355}.

\bibitem[Stein et~al.(2014)]{sage}
W.~Stein et~al.
\newblock \emph{{S}age {M}athematics {S}oftware ({V}ersion 5.12)}.
\newblock The Sage Development Team, 2014.
\newblock {\tt http://www.sagemath.org}.

\bibitem[van~der Waerden(1949)]{vanderWaerden:1949}
B.~L. van~der Waerden.
\newblock \emph{Modern Algebra}, volume~I.
\newblock Frederick Ungar Publishing Co., 1949.

\bibitem[Zippel(1991)]{Zippel:1991}
R.~Zippel.
\newblock Rational function decomposition.
\newblock In \emph{Proceedings of the 1991 International Symposium on Symbolic
  and Algebraic Computation}, ISSAC '91, pages 1--6, New York, NY, USA, 1991.
  ACM.
\newblock ISBN 0-89791-437-6.
\newblock \doi{10.1145/120694.120695}.

\end{thebibliography}

\end{document}